\providecommand{\keywords}[1]{\textbf{\textit{Keywords---}} #1}
\title{
Quantifying Multivariate Graph Dependencies: \\Theory and Estimation for Multiplex Graphs}
\author{Anda Skeja\footnote{Institute of Mathematics, EPFL. Email: {first.last}@epfl.ch} \quad \quad \quad Sofia C. Olhede$^*$}\usepackage[utf8]{inputenc} 
\DeclareMathOperator*{\argmax}{arg\,max}
\newcommand{\Prb}{{\mathbb{P}}}
\newcommand{\rtwo}{\rho^{(2)}}
\newcommand{\kap}[1]{\rtwo(1)-\lambda^2}
\definecolor{thecolor}{rgb}{0.1,0.5,1}
\newtheorem{theorem}{Theorem}
\newtheorem{cor}{Corollary}
\newtheorem{definition}{Definition}
\newtheorem{example}{Example}
\newtheorem{remark}{Remark}
\definecolor{darkgreen}{rgb}{0.0, 0.2, 0.13}  
\date{}
\begin{document}
\maketitle
\begin{abstract}
Multiplex graphs, characterised by their layered structure, exhibit informative interdependencies within layers that are crucial for understanding complex network dynamics. Quantifying the interaction and shared information among these layers is challenging due to the non-Euclidean structure of graphs. Our paper introduces a comprehensive theory of multivariate information measures for multiplex graphs. We introduce graphon mutual information for pairs of graphs and expand this to graphon interaction information for three or more graphs, including their conditional variants. We then define graphon total correlation and graphon dual total correlation, along with their conditional forms, and introduce graphon $O-$information. We discuss and quantify the concepts of synergy and redundancy in graphs for the first time, introduce consistent nonparametric estimators for these multivariate graphon information--theoretic measures, and provide their convergence rates.  We also conduct a simulation study to illustrate our theoretical findings and demonstrate the relationship between the introduced measures, multiplex graph structure, and higher--order interdependecies. Real-world applications further show the  utility of our estimators in revealing shared information and dependence structures in real-world multiplex graphs. This work not only answers fundamental questions about information sharing across multiple graphs but also sets the stage for advanced pattern analysis in complex networks.

\end{abstract}
\keywords{Graph pattern analysis, graphon mutual information, graphon interaction information, graphon total correlation, estimation of multivariate graphon information--theoretic measures, multivariate graph limit model.}

\section{Introduction}
\quad 

This paper introduces the quantification of dependency and higher-order interdependencies in multiplex exchangeable random graphs, specifically addressing cases involving two or more graphs. Exchangeability is equivalent to assuming that the distribution of the random graph remains invariant under node permutations~\cite{orbanz2014bayesian}. Due to the non-Euclidean nature of graphs, this challenge requires innovative approaches that combine multivariate information-theoretic measures with the structural properties of graphs. We introduce a comprehensive theory of multivariate information-theoretic measures based on the graph limit formalism, which serves as the generating mechanism for exchangeable graphs. This approach ensures that our measures are not dependent on specific features and are invariant under graph isomorphisms, fulfilling the conditions put forth by~\cite{mowshowitz1968entropy} for complexity measures. We address fundamental questions about the interconnectivity and higher-order interdependence among graphs, exploring how much information is shared among them or a subset of them and whether some graphs can explain relationships among others. We also explore higher-order organization and dependence structures that cannot be traced by lower-order measures, such as synergy and redundancy, and further investigate their relationships with the structure of multiplex graphs.

 Interest in multiplex graphs~\cite{bianconi2018multilayer} has been increasing due to their ability to model multiple types of interactions across a shared set of nodes. These extensions of graphs offer the possibility in unraveling the intricacies of network dynamics, as demonstrated in studies by~\cite{athreya2022discovering,macdonald2022latent,chandna2022edge,zhang2024Consistent}. Building on the framework of exchangeable random graphs, extendable to multivariate cases \cite{chandna2022edge}, our theory also applies to graphs that are subject to graph matching. For further details on graph matching, see \cite{sun2019using, caetano2009learning, caelli2004eigenspace, yan2016short}.

In our previous work~\cite{skeja2023entropy} we defined the complexity of the generating mechanism of any exchangeable random graph via graphon entropy, and introduced a nonparametric graphon entropy estimator. For the multivariate setting, we develop information--theoretic measures in a series of steps: we start by considering two graph observations made on the same node set. We then define the joint entropy and mutual information encompassed by the generating mechanisms of such observations via the bivariate graph limit model~\cite{chandna2022edge}, which we call \emph{joint graphon entropy} and \emph{graphon mutual information}, respectively. Assuming we observe graphs from permutation invariant distributions, we approximate the joint distribution of the two graphs using a two-layer correlated stochastic block model. Through this approach, we estimate the graphon mutual information of bivariate graph observations. The properties of this estimator are discussed, along with its convergence rate. Although mutual information estimation between random variables and vectors is well-explored, with extensive methodologies discussed in~\cite{paninski2003estimation}, estimation of  mutual information between two graphs remains largely unexplored. Initial investigations by~\cite{escolano2017mutual} introduced a method for estimating mutual information between two graphs using copula entropy, which the authors acknowledge is computationally demanding. This underscores a significant gap in the literature regarding a theoretically sound estimator of mutual information between graphs.  Additionally, we introduce the graphon mutual information matrix, where the entries are given by the graphon mutual information between the two graphs under consideration.  

Pairwise dependencies often fail to explain the overall dependence structure when more than two graphs are involved. Therefore, to understand higher-order interactions, we extend our analysis to the trivariate case. The exploration of multivariate dependence measures began with the concept of `interaction information' by~\cite{mcgill1954multivariate}.  The concept of total correlation was first introduced by~\cite{watanabe1960information}, and that of dual total correlation by~\cite{han1975linear}. These measures have been independently identified and renamed on multiple occasions, as evidenced by works such as those by~\cite{joe1989relative,studeny1998multiinformation}. Fundamental research in this direction has focused on discovering methods to quantify various types of dependency structures among multiple random variables. This includes, for example, identifying subsets of variables that demonstrate conditional independence from each other. Early analyses in this direction can be found in works by~\cite{pearl2022graphoids,perez1977varepsilon}. See~\cite{studeny1998multiinformation} for a survey on multivariate dependence measures, and~\cite{paninski2003estimation,rahimzamani2018estimators,mesner2020conditional} for estimation methods. Research into multivariate information-theoretic measures for graphs remains largely unexplored. To address this gap, we introduce \emph{graphon interaction information}, \emph{graphon total correlation}, \emph{graphon dual total correlation}, and \emph{graphon $O-$information}, along with their conditional variants. We also propose consistent estimators for these measures in both bivariate and trivariate settings and detail their convergence rates. Furthermore, we discuss extending these estimators to the $d-$variate setting for $d >3$. Our work establishes a foundational framework for analyzing multiple graph patterns by introducing definitions and estimators for multivariate information-theoretic measures across multiple graphs.

This paper is organized into seven sections. Section~\ref{sec-prelim} provides background on exchangeable random graphs and graph limits. Section~\ref{sec:mutual-information} introduces definitions of the joint graphon entropy and graphon mutual information between two graphs, and the graphon mutual information matrix. Section~\ref{sec-info-measure-three} extends our framework to multivariate settings, introducing measures such as graphon total correlation, graphon dual total correlation, graphon interaction information, and graphon O-information. Section~\ref{sec-est-bivariate} focuses on the estimation of the joint graphon between two and three exchangeable random graphs, along with the introduced information measures. We introduce consistent estimators and provide convergence rates for the bivariate and trivariate settings, as well as discuss extensions to the $d-$variate setting for $d>3$. To demonstrate the practical implications of our findings, Section~\ref{sec-simulation} presents a simulation study that investigates the performance of the introduced estimators and illustrates scenarios of synergy and redundancy through graphon interaction information and graphon total correlation. Additionally, the applicability of our theoretical results is further exemplified through the analysis of real-world graph data in Section~\ref{sec-real-data}. Finally, we offer some conclusions in Section~\ref{sec-conclusion}.

\section{Notation and preliminaries}\label{sec-prelim}
In this work, all graphs are undirected and simple, i.e without self-loops or multiple edges. The vertex set of a graph $G$ is represented as $V(G)$, and its edge set as $E(G)$. Nodes $i$ and $j$ are considered adjacent if there is an edge connecting them. For a graph $G$ with $n$ vertices, its adjacency matrix is denoted by $A\in \{0,1\}^{n \times n}$, where the entry $A_{ij}$ is set to one if there is an edge between nodes $i$ and $j$, and zero if not. The degree of a vertex $i$ is indicated by $d_i$. We use $\log(\cdot)$ to denote the natural logarithm throughout the paper. We use the notation $W^{(d)}$ to denote the graphon giving rise to the exchangeable random graph $G_d(n,W^{(d)})$. We note that the multivariate graphon information--theoretic measures that will shortly be introduced are functions of the underlying graph limit and independent of the graphon representing them, similar to graphon entropy as discussed in~\cite{hatami2018graph}. Therefore, we will use the terms graph limit and graphon interchangeably in this context. Additionally, $\mathbf{W}_{1,2,\ldots,d}$ represents the system of graphons with $2^d-1$ entries, where each entry corresponds to a parameter of the $d$-variate graph limit model. We let $\mathrm{MultBern_d}$ denote the $d-$variate Bernoulli distribution, for details see~\cite{teugels1990some}.
\subsection{Graphons and exchangeable random graphs}
\begin{definition}[Graphon~\cite{lovasz2006limits}]\label{def:graphon}
    A graphon $W:[0,1]^2 \to [0,1]$ is a symmetric measurable function.
\end{definition}
A random graph $G$ is considered exchangeable if its adjacency matrix constitutes a jointly exchangeable array, and it can be defined in terms of the random graphon defined in Definition~\ref{def:graphon} . This category of graphs has received considerable attention, as exemplified by works like~\cite{lauritzen2008exchangeable,orbanz2014bayesian,austin2015exchangeable,janson2013graphons}, and can be notably characterized by a foundational representation theorem known as the Aldous-Hoover theorem~\cite{aldous1981representations,hoover1979relations}.
\begin{definition}[Joint exchangeability]
A random array $(A_{ij})_{i,j \in \mathbb{N}^*}$ is jointly exchangeable if for any permutation $\pi$ of $\mathbb{N}^*$ 
\begin{equation*}
(A_{ij})\overset{d}{=}(A_{\pi(i)\pi(j)}).
\end{equation*}
\end{definition}
\begin{theorem}[Aldous--Hoover~\cite{aldous1981representations,hoover1979relations}]\label{thm-aldous}
Let $A$ be a jointly exchangeable random array. Then there exists an i.i.d. sequence $\xi=(\xi_1,...,\xi_n)$ following $\mathrm{U}(0,1)$, a random variable $\gamma \sim \mathrm{U}(0,1)$ independent of $\xi$, and a random function $W:[0,1]^3 \to [0,1]$ such that \begin{equation*}\label{graphon}
    \Prb(A_{ij}=1 | \xi, \gamma) = W(\xi_i,\xi_j,\gamma),
\end{equation*} and $A_{ij}$ are conditionally independent across $i,j$ given $\xi$ and $\gamma$.
\end{theorem}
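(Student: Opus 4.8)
The statement is the Aldous--Hoover representation theorem, so the plan is to derive it from a conditional de Finetti argument combined with a measurable coding (transfer) lemma for standard Borel spaces. First I would record that joint exchangeability means the law of $(A_{ij})$ is invariant under the simultaneous action of every finite permutation on both indices, and that this invariance passes to the infinite array. The strategy is then to peel off the randomness in a hierarchy: a single global source $\gamma$ governing the whole array, a per-node source $\xi_i$ attached to each row/column, and idiosyncratic per-edge noise.

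The central analytic step is an iterated application of de Finetti's theorem. Viewing a suitable exchangeable sequence extracted from the array (for instance the sequence of rows, which inherits exchangeability under node relabelling), de Finetti's theorem yields that, conditionally on the tail -- equivalently invariant -- $\sigma$-algebra, these objects are i.i.d.; this conditioning isolates the global directing measure, which I would encode as $\gamma \sim \mathrm{U}(0,1)$. Conditioning a second time on $\gamma$, the rows become conditionally i.i.d., and the latent label of row $i$ is encoded as $\xi_i \sim \mathrm{U}(0,1)$, independent across $i$ and independent of $\gamma$. The fact that any conditional distribution on a Polish space can be realized as a measurable function of an independent uniform variable -- the transfer/coding lemma, resting on $[0,1]$ being standard Borel -- is what makes these encodings simultaneously measurable and consistent across $i$.

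With the hierarchy in place, one obtains a representation $A_{ij} = f(\gamma, \xi_i, \xi_j, \zeta_{ij})$, where the $\zeta_{ij}$ are i.i.d. $\mathrm{U}(0,1)$ edge variables independent of $(\gamma,\xi)$. Joint exchangeability together with the symmetry $A_{ij}=A_{ji}$ forces $f$ to be symmetric in $(\xi_i,\xi_j)$ and the noise to satisfy $\zeta_{ij}=\zeta_{ji}$; conditional independence of the entries given $(\gamma,\xi)$ is then immediate from the independence of the $\zeta_{ij}$. Because $A_{ij}$ is binary, I would finally integrate out the edge noise and set $W(x,y,g) = \Prb\bigl(A_{ij}=1 \mid \xi_i=x,\,\xi_j=y,\,\gamma=g\bigr)$, which is symmetric in its first two arguments and satisfies $\Prb(A_{ij}=1 \mid \xi,\gamma) = W(\xi_i,\xi_j,\gamma)$, as claimed.

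The hard part is not the binary bookkeeping at the end but the two structural ingredients feeding it. The first is making the iterated conditioning rigorous: showing that the relevant tail and invariant $\sigma$-algebras exist and that the conditional laws stabilize, typically via reverse-martingale convergence. The second is performing the measurable coding in a way compatible with the symmetric, \emph{simultaneous} permutation action, since for jointly (as opposed to separately) exchangeable arrays the same permutation acts on rows and columns and the representing function must respect this coupling. This is precisely the technical core handled in~\cite{aldous1981representations,hoover1979relations}, and I would lean on their coding machinery rather than reconstruct it.
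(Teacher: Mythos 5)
The paper does not prove this statement: it is the classical Aldous--Hoover representation theorem, quoted verbatim as background and attributed to~\cite{aldous1981representations,hoover1979relations}, so there is no in-paper argument to compare yours against. Judged on its own, your outline reproduces the standard proof architecture --- iterated de Finetti conditioning to peel off a global source $\gamma$ and per-node sources $\xi_i$, a transfer/coding lemma on standard Borel spaces to realize the conditional laws as measurable functions of independent uniforms, and integration over symmetric edge noise $\zeta_{ij}=\zeta_{ji}$ to produce $W(x,y,g)=\Prb(A_{ij}=1\mid\xi_i=x,\xi_j=y,\gamma=g)$. This is the right skeleton, and the final binary step and the conditional-independence claim are handled correctly.

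Two caveats. First, the step ``apply de Finetti to the sequence of rows'' is not as innocent as stated: under \emph{joint} exchangeability the permutation acts simultaneously on rows and columns, so the rows, viewed as elements of $\{0,1\}^{\mathbb{N}}$, are not an exchangeable sequence in the sense de Finetti requires --- permuting the row index also permutes the coordinates inside every row. You acknowledge this coupling at the end, but it is precisely where a naive iterated de Finetti argument breaks and where the genuinely hard combinatorial/measure-theoretic work of Aldous and Hoover lives; your proposal defers it to the references rather than resolving it, so what you have is a correct roadmap rather than a proof. Second, the theorem as stated in the paper writes $\xi=(\xi_1,\dots,\xi_n)$ for finite $n$, but the representation theorem is a statement about infinite (or infinitely extendable) exchangeable arrays; for a fixed finite array the conclusion can fail. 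Your opening remark that the invariance ``passes to the infinite array'' quietly assumes this extendability --- worth making explicit, since the paper itself elides it.
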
\label{thm-aldous-hoover}
To represent any infinite array, we need the additional random variable $\gamma$, but
as we shall assume, we only have one realization of $A$. Thus, the dependence on $\gamma$ cannot be estimated, and we shall suppress that dependence, assuming the graph is disassociated.
Therefore, we will use $W(\xi_i,\xi_j)$ for $1\leq i,j\leq n=|V|$ throughout this paper. 
\begin{example}[Exchangeable graph]\label{example-exch-graph}
Let $G=(V,E)$ with $|V|=n$ be a random graph and $A\in \{0,1\}^{n \times n}$ be its adjacency matrix. Then, the variable $\xi_i$ is associated with vertex $i$, and $W(\xi_i,\xi_j)$ in Theorem \ref{thm-aldous} is the graphon giving rise to the exchangeable graph $G(n,W)$.
\end{example}
\begin{definition}[Graphon entropy~\cite{janson2013graphons,hatami2018graph}]\label{def-entropy-hatami}
 The entropy of a graphon $W(x,y)$ is defined to be
\begin{equation}
\label{eqn:Hatemi-ent}
{\mathcal H}(W)=\iint_{[0,1]^2} h(W(x,y))\,dx\,dy,     
\end{equation}
where $h:[0,1]\rightarrow \mathbb{R}_{+}$ denotes the binary entropy given as
\begin{equation}
    \label{eqn:binary}
    h(x)=-x\log(x)-(1-x)\log(1-x),
\end{equation}\label{binaryentropy}
for $x\in [0,1]$, where $h(0)$ and $h(1)$ are defined to be zero to ensure continuity.
\end{definition}
\begin{definition}\label{assump:smooth}($\alpha$-H\"older graphon) We say that the exchangeable array $A\in \{0,1\}^{n\times n}
    $ is generated by an $\alpha-$H\"older graphon if $W \in \mathcal{F}^{\alpha}(M)$, where the latter is the class of H\"older functions with exponent $\alpha$ detailed as follows:\begin{align*}\label{holder-graphon}
    W\in\mathcal{F}^{\alpha}(M)\iff \sup_{(x,y)\neq(x',y')\in(0,1)^2} \frac{|W(x,y)-W(x',y')|}{|(x,y)-(x',y')|^{\alpha}}\leq M<\infty.
\end{align*}
\end{definition}

\section{Graphon mutual information between two graphs}\label{sec:mutual-information}

Mutual information, detailed in~\cite{cover1999elements}, captures all forms of dependence between two random variables, indicating that two variables are independent if and only if their mutual information is zero. This measure adheres to fundamental information-theoretic principles and can be computed as the difference between the sum of the individual entropies and the joint entropy. Mutual information has also been used to evaluate community detection algorithms (e.g.,~\cite{mayya2019mutual,newman2020improved,luo2020highly,zhang2015evaluating,jerdee2023normalized,jerdee2024mutual}). Furthermore, given that the stochastic block model corresponds to block-constant graphons, this approach can be applied to the setting of stochastic block models as well.

Calculating the mutual information between two graphs requires establishing their joint distribution and subsequently computing their respective entropies. Technically, defining the entropy of a graph necessitates establishing a probability distribution for a specific graph feature, an approach that is often limited due to the resulting entropy being feature-dependent~\cite{zenil2014correlation}. The authors of~\cite{skeja2023entropy, skeja2023measuring} have utilized graphon entropy, a graph property that is invariant under graph isomorphisms~\cite{hatami2018graph}, as a favorable measure to characterize the complexity of exchangeable graphs. We employ the bivariate graph limit model~\cite{chandna2022edge} to extend the univariate case from~\cite{skeja2023entropy}, proposing a method wherein the joint entropy of two graphs' generating mechanisms is conceptualized as the entropy associated with the bivariate graph limit model. We begin by discussing the bivariate graph limit model, then proceed to introduce joint graphon entropy, and consequently, graphon mutual information.

\subsection{Bivariate graph limit model}
Let $G_1(n,W^{(1)})$ and $G_2(n,W^{(2)})$ denote two exchangeable graphs on $n$ nodes generated by graphons $W^{(1)}$ and $W^{(2)}$ with a common latent vector $\xi \in (0,1)^n$ such that $\xi_i \sim U(0,1)$ for all $i \in [n]$, and let $A^{(1)}$ and $A^{(2)}$ denote the adjacency matrices of $G_1(n,W^{(1)})$ and $G_2(n,W^{(2)})$, respectively. Define $\underline{A}_{ij}=[A_{ij}^{(1)} \ A_{ij}^{(2)}]^T$ as the vector observation. It follows from~\cite{teugels1990some} that two marginal moments, namely, $\mathbb{E}[A_{ij}^{(1)}]$ and $\mathbb{E}[A_{ij}^{(2)}]$, and a co-dependence measure between them fully specify the distribution. A natural uncentered co-dependence measure, defined by~\cite{chandna2022edge}, can be obtained by the Hadamard product between $A^{(1)}$ and $A^{(2)}$, as follows:
\begin{equation*}\label{A-12}
    A^{(12)}_{ij}=(A^{(1)}\circ A^{(2)})_{ij}=A_{ij}^{(1)}A_{ij}^{(2)}, \quad 1\leq i,j \leq n.
\end{equation*}
Consequently, the expectation of this product corresponds to the joint probability,
\begin{equation}\label{f12}
    \mathbb{E}[A_{ij}^{(1)} A_{ij}^{(2)}]=\mathbb{P}(A_{ij}^{(1)}=1 , A_{ij}^{(2)}=1).
\end{equation}
Finally, the distribution of
$\underline{A}_{ij}$ takes the following form~\cite{chandna2022edge}:
    \begin{equation*}
    \underline{A}_{ij}|\xi \sim \mathrm{MultBernoulli_2}(\underline{W}(\xi_i, \xi_j)),
\end{equation*}
where $\underline{W}(\cdot)=[W^{(1)}(\cdot) \ \  W^{(2)}(\cdot) \ \ W^{(12)}(\cdot)]^T$. The graphon $W^{(12)}$ is as defined in Equation~\eqref{f12}.

\begin{table*}[t]
  \centering
  \begin{tabular}{cccc}
 & $A^{(2)}_{ij}=1$ & $A^{(2)}_{ij}=0$ \\ 
 $A^{(1)}_{ij}=1$ & $W^{(12)}(\xi_i,\xi_j)$ & $W^{(1)}(\xi_i,\xi_j)-W^{(1,2)}(\xi_i,\xi_j)$  \\  
 $A^{(1)}_{ij}=0$ & $W^{(2)}(\xi_i,\xi_j)-W^{(12)}(\xi_i,\xi_j)$ & $1-W^{(1)}(\xi_i,\xi_j)-W^{(2)}(\xi_i,\xi_j)+W^{(12)}(\xi_i,\xi_j)$ 
  \end{tabular}
  \caption{Bivariate Bernoulli distribution of two exchangeable adjacency matrices}
  \label{table}
\end{table*}

Consider the system of graphons given as 
\begin{align}\label{eq-system-graphons-bivariate}
    \mathbf{W}_{1,2}=[W^{(12)} \quad (W^{(1)}-W^{(12)}) \quad (W^{(2)}-W^{(12)}) (1-W^{(1)}-W^{(2)}+W^{(12)})],
\end{align} where each entry is a graphon resembling a parameter of the bivariate Bernoulli distribution as detailed in Table~\ref{table}. One possible way to generate graphons $W^{(1)}, W^{(2)}$ that are related and then find $W^{(12)}$ is via the input--output method introduced in~\cite{chandna2022edge}, which we will further detail in Section~\ref{sec-simulation}.
 We note that each entry is non-negative and the entries of this vector sum to one, therefore we can define the Shannon entropy of this system to be as follows:
\begin{definition}[Bivariate joint graphon entropy]\label{def-joint-graphon-entropy} Let $\mathbf{W}_{1,2}$ be as given in~\eqref{eq-system-graphons-bivariate}. We define the joint graphon entropy as follows:
\begin{align*}
   \nonumber  \mathcal{H}(\mathbf{W}_{1,2})=
   -\iint_{[0,1]^2} &\Big\{ W^{(12)} \log W^{(12)}
   +(W^{(1)}-W^{(12)})\log (W^{(1)}-W^{(12)})\\
   &+(W^{(2)}-W^{(12)})\log (W^{(1)}-W^{(12)})\\
   &+(1-W^{(1)}-W^{(2)}+W^{(12)})\log (1-W^{(1)}-W^{(2)}+W^{(12)}) \Big\} \ dx \ dy.
 \end{align*}
   \end{definition} 
\begin{cor}\label{thm-graphs-graphon} Let $G_1(n,W^{(1)})$ and $G_2(n,W^{(2)})$ be two exchangeable random graphs on $n$ nodes generated by the graphons $W^{(1)}$ and $W^{(2)}$, respectively on the probability space $(\Omega,\mu)$, where $\Omega=(0,1)^2$ and $\mu=U(0,1)$. Then, as $n\to \infty$
    \begin{equation*}
        \frac{\mathcal{H}(G_1(n,W^{(1)}), G_2(n,W^{(2)}))}{{n \choose 2}} \to \mathcal{H}(\mathbf{W}_{1,2})
    \end{equation*}
\end{cor}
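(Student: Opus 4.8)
The plan is to mirror the standard route for establishing convergence of (univariate) graphon entropy, adapting every step to the four-outcome edge distribution of Table~\ref{table}. Write $\underline{A}=(A^{(1)},A^{(2)})$ for the pair of adjacency matrices, viewed as the collection of edge vectors $\{\underline{A}_{ij}\}_{i<j}$, and let $h_2(\cdot)$ denote the Shannon entropy of a $\mathrm{MultBernoulli}_2$ law, so that $h_2(\underline{W}(x,y))$ is exactly the integrand appearing in Definition~\ref{def-joint-graphon-entropy}. The first step is the entropy identity $\mathcal{H}(G_1,G_2)=H(\underline{A}\mid\xi)+I(\underline{A};\xi)$, which is legitimate because $\underline{A}$ is a finite discrete object while $\xi$ is the continuous latent vector, so the mutual information with a discrete variable and the averaged conditional entropy are both well defined and nonnegative.

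The second step evaluates the conditional term exactly. Conditional on $\xi$, the vectors $\underline{A}_{ij}$ are independent across pairs, each distributed as $\mathrm{MultBernoulli}_2(\underline{W}(\xi_i,\xi_j))$; hence $H(\underline{A}\mid\xi=x)=\sum_{i<j}h_2(\underline{W}(x_i,x_j))$. Taking the expectation over the i.i.d.\ uniform coordinates of $\xi$ and using that $(\xi_i,\xi_j)$ is uniform on $[0,1]^2$ for each pair gives
\begin{equation*}
H(\underline{A}\mid\xi)=\binom{n}{2}\iint_{[0,1]^2}h_2(\underline{W}(x,y))\,dx\,dy=\binom{n}{2}\,\mathcal{H}(\mathbf{W}_{1,2}),
\end{equation*}
so the normalised conditional entropy equals $\mathcal{H}(\mathbf{W}_{1,2})$ for every $n$. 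Since $I(\underline{A};\xi)\ge 0$, this already yields the lower bound $\liminf_n \mathcal{H}(G_1,G_2)/\binom{n}{2}\ge \mathcal{H}(\mathbf{W}_{1,2})$, and the whole statement reduces to proving $I(\underline{A};\xi)/\binom{n}{2}\to 0$.

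The third and central step bounds $I(\underline{A};\xi)$ by a sub-quadratic quantity via discretisation of the latent positions. Fix $k$, let $z_i\in\{1,\dots,k\}$ index the cell of a regular $k$-partition of $[0,1]$ containing $\xi_i$, and split $I(\underline{A};\xi)=I(\underline{A};z)+I(\underline{A};\xi\mid z)$. The first piece is controlled crudely by $I(\underline{A};z)\le H(z)\le n\log k$. For the second, since $z$ is a deterministic function of $\xi$ we have $H(\underline{A}\mid\xi,z)=H(\underline{A}\mid\xi)$, so $I(\underline{A};\xi\mid z)=H(\underline{A}\mid z)-H(\underline{A}\mid\xi)$; bounding $H(\underline{A}\mid z)$ by subadditivity of entropy over the $\binom{n}{2}$ edge vectors, and noting that the single-edge law of $\underline{A}_{ij}$ given $(z_i,z_j)$ is again $\mathrm{MultBernoulli}_2$ with the cell-averaged parameter $\overline{\underline{W}}_{z_iz_j}$, yields $H(\underline{A}\mid z)\le\binom{n}{2}\mathcal{H}(\mathbf{W}_{1,2}^{(k)})$, where $\mathbf{W}_{1,2}^{(k)}$ is the cell-averaged block version of the system. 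Combining,
\begin{equation*}
\frac{I(\underline{A};\xi)}{\binom{n}{2}}\le\frac{n\log k}{\binom{n}{2}}+\Big(\mathcal{H}(\mathbf{W}_{1,2}^{(k)})-\mathcal{H}(\mathbf{W}_{1,2})\Big).
\end{equation*}
Letting $n\to\infty$ kills the first term for each fixed $k$, and then letting $k\to\infty$ kills the second: the cell averages converge to $\underline{W}$ almost everywhere by the Lebesgue differentiation theorem, and since $h_2$ is continuous and bounded on the probability simplex, dominated convergence gives $\mathcal{H}(\mathbf{W}_{1,2}^{(k)})\to\mathcal{H}(\mathbf{W}_{1,2})$. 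Hence $\limsup_n I(\underline{A};\xi)/\binom{n}{2}=0$, and with nonnegativity the claim follows.

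The routine parts are the chain-rule bookkeeping and the exact evaluation of the conditional entropy; the main obstacle is the third step, namely showing the latent-position information is negligible at the $\binom{n}{2}$ scale. The delicate points there are (i) checking that the convex region of valid $\mathrm{MultBernoulli}_2$ parameters is preserved under cell-averaging, so that $h_2(\overline{\underline{W}}_{ab})$ and $\mathbf{W}_{1,2}^{(k)}$ are well defined, and (ii) justifying $\mathcal{H}(\mathbf{W}_{1,2}^{(k)})\to\mathcal{H}(\mathbf{W}_{1,2})$ for a merely measurable system through the Lebesgue differentiation theorem applied to each of the four components, together with the boundedness of $h_2$ by $\log 4$. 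Notably, no smoothness of the graphons is required, so the argument applies to the general measurable case rather than only the $\alpha$-H\"older class, and it extends verbatim to more than two graphs by replacing $h_2$ with the entropy of the corresponding multivariate Bernoulli law.
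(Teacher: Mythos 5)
Your proposal is correct and follows essentially the same route as the paper's proof: the lower bound comes from conditioning on $\xi$, and the upper bound comes from discretising the latent positions into $m$ cells, paying an $n\log m$ penalty, applying subadditivity to reduce to the cell-averaged block system, and then taking $n\to\infty$ followed by $m\to\infty$ via a.e.\ convergence of the cell averages and dominated convergence. Your phrasing through $\mathcal{H}(\underline{A})=H(\underline{A}\mid\xi)+I(\underline{A};\xi)$ and the chain rule is only a notational repackaging of the paper's direct entropy decomposition $\mathcal{H}(G_1,G_2)\leq \mathcal{H}(M)+\mathcal{H}(G_1,G_2\mid M)$, so the two arguments are the same.
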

\begin{proof}See Appendix I. A.
\end{proof}
Corollary~\ref{thm-graphs-graphon}, which follows from the univariate case shown in~\cite{janson2013graphons}, establishes that as $n$ grows, the joint entropy of two graphs normalized by ${n \choose 2}$ converges to the entropy of the system of graphons as defined in Definition~\ref{def-joint-graphon-entropy}. Consequently, we use the term \emph{joint graphon entropy} as introduced in Definition~\ref{def-joint-graphon-entropy}. Similarly, this convergence applies, wlog, to the joint entropy of any $d$ graphs sharing the same $n$ and the latent vector $\xi$. 
\begin{definition}[Graphon mutual information] Considering the definition of the bivariate joint graphon entropy given in Definition~\ref{def-joint-graphon-entropy}, the graphon mutual information between two exchangeable graphs $G_1(n,W^{(1)})$ and $G_2(n,W^{(2)})$ can then be defined as follows:
\begin{equation*}\label{joint_entropy}
\mathrm{I}_{\mathbf{W}_{1,2}}(W^{(1)};W^{(2)})=\mathcal{H}(W^{(1)})+\mathcal{H}(W^{(2)})-\mathcal{H}(\mathbf{W}_{1,2}),
\end{equation*}
where $\mathcal{H}(W^{(1)})$ and $\mathcal{H}(W^{(2)})$ as defined in Definition~\ref{def-entropy-hatami} denote the entropies of graphons $W^{(1)}$ and $W^{(2)}$, respectively.
\end{definition}
Alternatively the `distance' between two graph generating mechanisms could be denoted as
\begin{equation}\label{distance}
    \mathrm{D}_{\mathbf{W}_{1,2}}(W^{(1)};W^{(2)})=\mathrm{I}_{\mathbf{W}_{1,2}}(W^{(1)};W^{(1)})-\mathcal{H}(\mathbf{W}_{1,2}).
\end{equation}
Note that ~\eqref{distance} satisfies the standard axioms for a distance.
Recall that mutual information quantifies the KL divergence between the joint distribution of two variables and the product of their marginal distributions. Therefore, it represents a special case of total correlation, which we will introduce for graphs in subsequent sections.

\subsection{Graphon mutual information matrix}
Summarizing the pairwise dependencies between graph-generating mechanisms of layers of multiplex graphs can be naturally achieved through a \emph{graphon mutual information matrix}. This matrix also helps in understanding the interconnectedness of the multiplex graph through its associated entropy, which will be explained further below.  To quantify the structure within these pairwise connections, we utilize the von Neumann entropy, particularly useful in scenarios where understanding the organization in pairwise relationships is important. The von Neumann entropy of a matrix summarizing pairwise dependencies was used by~\cite{felippe2023threshold} to address the open problem of calculating entropy from the Pearson correlation matrix without any thresholding steps. The mutual information matrix for time series has previously been defined in~\cite{liu2007brain}, used in~\cite{zhao2017mutual}, and studied by~\cite{jakobsen2014mutual}.
\begin{definition}[Graphon mutual information matrix]\label{def-mim}
    Let $G_1(n,W^{(1)}),\ldots,G_d(n,W^{(d)})$ be $d$ graphs on the same set of vertices. Then, we define the $d \times d$ graphon mutual information matrix to account for pairwise dependencies between graph generating mechanisms as follows:
\begin{align*}
    \mathbf{I}=\begin{pmatrix}
          \mathcal{H}(W^{(1)}) & \mathrm{I}_{\mathbf{W}_{1,2}}(W^{(1)};W^{(2)}) & \cdots & \mathrm{I}_{\mathbf{W}_{1,d}}(W^{(1)};W^{(d)}) \\
         \mathrm{I}_{\mathbf{W}_{2,1}}(W^{(2)};W^{(1)}) & \mathcal{H}(W^{(2)}) & \cdots & \mathrm{I}_{\mathbf{W}_{2,d}}(W^{(2)};W^{(d)}) \\
          \vdots & \vdots & \ddots & \vdots\\
         \mathrm{I}_{\mathbf{W}_{d,1}}(W^{(d)};W^{(1)}) & \mathrm{I}_{\mathbf{W}_{d,2}}(W^{(d)};W^{(2)}) & \cdots & \mathcal{H}(W^{(d)}) \\
    \end{pmatrix}
\end{align*}
\end{definition}
This matrix is symmetric since $\mathrm{I}_{\mathbf{W}_{i,j}}(W_i,W_j)=\mathrm{I}_{\mathbf{W}_{j,i}}(W_j,W_i)$ for $1\leq i,j\leq n$. Its entries are also all nonnegative as mutual information takes values in $\mathbb{R}^{+}$, with value $0$ indicating independence. We note that $\mathrm{I}_{\mathbf{W}_{i,i}}(W^{(i)};W^{(i)})=\mathcal{H}(W^{(i)})$. For a more intuitive interpretation, we normalize the entries of the graphon mutual information matrix and let the normalized entries form the \emph{normalized graphon mutual information matrix} $\mathbf{I}^{\mathrm{N}}$ as follows:
\begin{align}\label{normalized-mi}
    \mathrm{I}_{\mathbf{W}_{i,j}}^{\mathrm{N}}(W_i;W_j)=\frac{\mathrm{I}_{\mathbf{W}_{i,j}}(W^{(i)};W^{(j)})}{\min\{\mathcal{H}(W^{(i)}),\mathcal{H}(W^{(j)})\}}, \quad 1\leq i,j \leq n.
\end{align}
This normalization ensures that $\mathrm{I}_{\mathbf{W}_{i,i}}(W^{(i)};W^{(i)})=1$ attains the maximum value and all normalized mutual information values lie in the range $[0,1]$.

Following~\cite{felippe2023threshold}, a natural measure of the order present in the pairwise dependencies of the system of graphs is entropy. The order present in the pairwise interdependencies of the system of graphs can be summarized by the \emph{von Neumann entropy} of the graphon mutual information matrix, which needs to be normalized appropriately. This normalization will ensure that the graphon mutual information matrix resembles a density matrix, whose von Neumann entropy can consequently be calculated via its eigenvalues. The density matrix must be symmetric, have a unit trace, and be positive semi-definite. The first condition of symmetry is automatically satisfied by the definition of mutual information. To fulfill the second condition of having a unit trace, we normalize the entries of the mutual information matrix by the size of the layers and define $\mathbf{I}^{\rho}$ to be the \emph{graphon mutual information density matrix} as follows:
\begin{equation}\label{density-matrix}
   \mathbf{I}^{\rho}=\frac{1}{d}\mathbf{I}^{\mathrm{N}},
\end{equation}
where each entry is a normalized graphon mutual information divided by the dimension of the matrix, denoted as $d$ for a $d \times d$ matrix. Here, the superscript $\rho$ merely indicates that the normalized graphon mutual information matrix resembles a density matrix. In this manner, the graphon mutual information density matrix will be symmetric and also possess a unit trace. As previously stated, the last condition for a density matrix is positive semi-definiteness. Contrary to intuition, it has been proved that not all mutual information matrices are positive semi-definite~\cite{jakobsen2014mutual}. Positive semi-definiteness for the mutual information matrix as given in Definition~\ref{def-mim} is proven to hold when the matrix's dimension is $3\times 3$. Consequently, the same condition holds true for the graphon mutual information density matrix. Therefore, the entropy of the density matrix, which will be discussed below, can be applied to the case of three graphs and should be applied to more than three graphs by first verifying that the eigenvalues are non-negative. Once positive semi-definiteness of the graphon mutual information density matrix is confirmed, its entropy can be calculated as the von Neumann entropy of a density matrix.
\begin{definition}[Entropy of the graphon mutual information density matrix] Let the graphon mutual information density matrix be given as in~\eqref{density-matrix}. Then, the von Neumann entropy of $\mathbf{I}^{\rho}$ takes the following form
    \begin{equation*}
    \mathcal{H}(\mathbf{I}^{\rho})=-\sum_{i=1}^d \lambda_i \log(\lambda_i),
\end{equation*}
where $\lambda_i$ for $i\in[d]$ denotes the i$^{th}$ eigenvalue and $\log(\cdot)$ denotes the natural logarithm.
\end{definition}
This entropy definition achieves a maximum value of $\log d$. Thus, dividing by $\log d$ ensures that the entropy lies within the range of $[0,1]$. Therefore, we will use the following normalized entropy for the graphon mutual information density matrix:
\begin{equation}\label{von-Neumann-normalization}
\mathcal{H}^{\mathrm{N}}(\mathbf{I}^{\rho}) = \frac{\mathcal{H}(\mathbf{I}^{\rho})}{\log(d)}.
\end{equation}
The entropy of the graphon mutual information density matrix provides insights into the extent of the pairwise interconnectedness of the layers of the multiplex graph. Consider the case where the graphon mutual information between any two graphs $G_i(n,W^{(i)})$ and $G_j(n,W^{(j)})$ for $i,j \in [d]$ is zero. In this case, the graphon mutual information density matrix would be $\mathbf{I}^{\rho}=\frac{\mathbf{I}}{d}$, where $\mathbf{I}$ denotes the identity matrix. The entropy of this density matrix would equal $\log d$, which is the maximal entropy for a 
$d\times d$ density matrix, corresponding to a \emph{completely mixed state}. Conversely, if all graphs are perfectly dependent on each other, the resulting entropy would be zero, corresponding to a \emph{pure state configuration}. In conclusion, the von Neumann entropy of the graphon mutual information density matrix quantifies the pairwise interconnectedness of the multiplex graph.

\section{Multivariate graphon information--theoretic measures}\label{sec-info-measure-three}

Multivariate information-theoretic measures for graphs remain largely unexplored. Even in the realm of scalar random variables, extending mutual information to higher dimensions has proven to be a non-trivial task, resulting in various definitions of multivariate information measures that exhibit subtle yet significant differences~\cite{timme2014synergy}. While information-theoretic approaches for one or two variables are well-established and extensively studied, many real-world systems involve higher-order interdependencies among three or more random variables~\cite{rosas2019quantifying}. To tackle this complexity, efforts have been made to develop multivariate information-theoretic measures aimed at capturing phenomena such as `synergy' and `redundancy'. However, arriving at a consensus on the precise definitions of synergy remains an ongoing challenge~\cite{williams2010nonnegative, brenner2000synergy, Griffith2014}. We will briefly review the concepts relevant to this paper below and do not intend to provide a comprehensive overview of multivariate information-theoretic measures.

\subsubsection*{Short background on multivariate mutual information, total correlation, dual total correlation, and the O-information}
The pioneering work of~\cite{mcgill1954multivariate} introduced the concept of interaction information, representing the earliest endeavor to quantify the relationship among three variables within a joint probability distribution. Contrary to mutual information, it can be both positive and negative. McGill's original interpretation~\cite{mcgill1954multivariate} states that the interaction information is ``the gain (or loss) in sample information transmitted between any two of the variables, due to the additional knowledge of the third variable''. This measure has been extensively used in literature and has been interpreted in terms of synergy and redundancy~\cite{brenner2000synergy,anastassiou2007computational,williams2010nonnegative}. As an extension of the interaction information, authors of~\cite{chechik2001group} have introduced the Redundancy--Synergy Index (RSI). Authors of~\cite{rosas2019quantifying} differentiate higher order dependencies as ``collective constraints'' and ``shared randomness''. They define the $O$-information, previously introduced as the ``enigmatic information'' by~\cite{james2011anatomy} to capture both synergy and redundancy. It differentiates between scenarios dominated by redundancy, where three or more variables contain the same information, and systems dominated by synergy, which are defined by complex patterns at higher orders that cannot be deduced from simpler, lower-order interactions~\cite{rosas2019quantifying}. This measure is defined as the difference between the \emph{total correlation}, representing collective constraints, and the \emph{dual total correlation}, representing shared randomness. It is equivalent to the interaction information for the case of three random variables~\cite{rosas2019quantifying}. Total correlation~\cite{watanabe1960information} for $d$ random variables is the KL divergence between their joint distribution and the product of their marginals. It assesses the discrepancy between the joint distribution and the product of their marginal distributions, and has emerged as an attempt of having a multivariate information measure that is positive. It is crucial in the context of `optimal dependence structure simplification'~\cite{watanabe1960information}.  
Dual total correlation introduced by~\cite{han1978nonnegative}, measures the information shared among multiple variables, specifically the information that becomes available when observing more than one variable. This measure is alternatively known as excess entropy~\cite{olbrich2008should}, or binding information~\cite{vijayaraghavan2017anatomy,abdallah2012measure}.

 \subsection{Multivariate graph limit model}
We model the distribution of $d$ exchangeable random graphs through their generating mechanisms using a 
$d-$variate Bernoulli distribution, denoted by $\mathrm{MultBernoulli_d}$ which yields a $d-$variate graph limit model~\cite{chandna2022edge}. Characterizing the full distribution requires 
$2^{d}-1$ moments. On the other hand, considering only pairwise moments and marginal distributions requires $d(d+1)/2$ moments, sufficient only when $d=2$.  For scenarios where $d>2$, accurately capturing higher-order dependencies requires  multivariate information-theoretic measures that consider the entire $d$-variate distribution. This approach ensures that all higher--order interdependencies within the system are represented. Building on the definition of graphon mutual information, we define graphon interaction information, graphon total correlation, graphon dual total correlation, and graphon $O-$information among multiple graphs via the multivariate graph limit model.  We consider the vector adjacency matrix observation $\underline{A}_{ij}$ for each pair of nodes $i$ and $j$, conditioned on the latent vector $\xi$ that follows a $d-$variate Bernoulli distribution with parameter $\underline{W}(\xi_i, \xi_j)$. Specifically, the multivariate graph limit model is defined as follows:
\begin{equation*}
\underline{A}_{ij}|\xi \sim \mathrm{MultBernoulli_d}(\underline{W}(\xi_i, \xi_j)),
\end{equation*}
where $\underline{A}_{ij}=[A^{(1)}_{ij} \ldots A^{(d)}_{ij}]$ and $\underline{W}$ is defined as:
\begin{equation*}
\underline{W} = [W^{(1)} \ W^{(2)} \ \ldots \ W^{(d)} \ W^{(12)} \ W^{(13)} \ \ldots \ W^{(d-1 \ d)} \ \ldots]^T,
\end{equation*}
and contains $2^d - 1$ parameters. These parameters capture all possible combinations of interactions among the $d$ graphs. For more details on the formulation and implications of this model, see~\cite{chandna2022edge}. 
\newline
When $d=3$, the multivariate graph limit model is given by:
    \begin{equation*}
    \underline{A}_{ij}|\xi \sim \mathrm{MultBernoulli_3}(\underline{W}(\xi_i, \xi_j)),
\end{equation*}
where $\underline{W}(\cdot)=[W^{(1)}(\cdot) \quad W^{(2)}(\cdot) \quad W^{(3)} \quad W^{(12)}(\cdot) \quad W^{(13)}(\cdot) \quad W^{(23)}(\cdot) \quad W^{(123)}(\cdot)]^T$. We specify the full distribution of the trivariate graph limit model by the equations outlined below, where now we will need $2^3-1$ moments:
\begin{align}    \label{eq-parameter-trivariate}\Prb(A_{ij}^{(1)}=1,A_{ij}^{(2)}=1,A_{ij}^{(3)}=1|\xi)&=W^{(123)}(\xi_i,\xi_j)\\
\Prb(A_{ij}^{(1)}=1,A_{ij}^{(2)}=1,A_{ij}^{(3)}=0|\xi)&=W^{(12)}(\xi_i,\xi_j)-W^{(123)}(\xi_i,\xi_j)\\    \Prb(A_{ij}^{(1)}=1,A_{ij}^{(2)}=0,A_{ij}^{(3)}=1|\xi)&=W^{(13)}(\xi_i,\xi_j)-W^{(123)}(\xi_i,\xi_j)\\
\Prb(A_{ij}^{(1)}=0,A_{ij}^{(2)}=1,A_{ij}^{(3)}=1|\xi)&=W^{(23)}(\xi_i,\xi_j)-W^{(123)}(\xi_i,\xi_j)\\
\Prb(A_{ij}^{(1)}=1,A_{ij}^{(2)}=0,A_{ij}^{(3)}=0|\xi)&=W^{(1)}(\xi_i,\xi_j)-W^{(12)}(\xi_i,\xi_j)-W^{(13)}(\xi_i,\xi_j)+W^{(123)}(\xi_i,\xi_j)\\
\Prb(A_{ij}^{(1)}=0,A_{ij}^{(2)}=1,A_{ij}^{(3)}=0|\xi)&=W^{(2)}(\xi_i,\xi_j)-W^{(12)}(\xi_i,\xi_j)-W^{(23)}(\xi_i,\xi_j)+W^{(123)}(\xi_i,\xi_j)\\
\Prb(A_{ij}^{(1)}=0,A_{ij}^{(2)}=0,A_{ij}^{(3)}=1|\xi)&=W^{(3)}(\xi_i,\xi_j)-W^{(13)}(\xi_i,\xi_j)-W^{(23)}(\xi_i,\xi_j)+W^{(123)}(\xi_i,\xi_j)
\end{align}
Then, the last case, i.e. \begin{align}\label{eq-parameter-trivar}
\nonumber &\Prb(A_{ij}^{(1)}=0,A_{ij}^{(2)}=0,A_{ij}^{(3)}=0)\\
&=1-W^{(1)}(\xi_i,\xi_j)-W^{(2)}(\xi_i,\xi_j)-W^{(3)}(\xi_i,\xi_j)+W^{(12)}(\xi_i,\xi_j)+W^{(13)}(\xi_i,\xi_j)+W^{(23)}(\xi_i,\xi_j)-W^{(123)}(\xi_i,\xi_j)
\end{align} 
follows from the law of total probability. 

Now, we can define the \emph{joint graphon entropy of the trivariate graph limit model} by considering a system of graphons again, and denoting it by $\mathbf{W}_{1,2,3}$, where $\mathbf{W}_{1,2,3}$ has $8$ entries as given by 
\begin{align}\label{eq-system-graphon-trivariate}
    \nonumber  &{\mathbf{W}}_{1,2,3}= 
    \Big[\overbrace{W^{(123)}}^{:=p_{111}} \quad \overbrace{({W}^{(12)}-{W}^{(123)})}^{:=p_{110}} \quad \overbrace{({W}^{(13)}-{W}^{(123)})}^{:=p_{101}} \quad \overbrace{({W}^{(23)}-{W}^{(123)})}^{:=p_{011}} \\
  \nonumber  &\overbrace{({W}^{(1)}-{W}^{(12)}-{W}^{(13)}+{W}^{(123)})}^{:=p_{100}}\quad  \overbrace{({W}^{(2)}-{W}^{(12)}-{W}^{(23)}+{W}^{(123)})}^{:=p_{010}} \\
   &\overbrace{({W}^{(3)}-{W}^{(13)}-{W}^{(23)}+{W}^{(123)})}^{:=p_{001}} \quad \overbrace{(1-{W}^{(1)}-{W}^{(2)}+{W}^{(3)}+{W}^{(12)}+{W}^{(13)}+{W}^{(23)}-{W}^{(123)})}^{:=p_{000}}\Big].
\end{align}
Each entry of $\mathbf{W}_{1,2,3}$ as given in~\eqref{eq-system-graphon-trivariate}, is a graphon resembling a parameter of the trivariate Bernoulli distribution, detailed in Equations~\eqref{eq-parameter-trivariate}--\eqref{eq-parameter-trivar}, and they sum to one. This property enables the definition of an entropy for the system. We use the notation $p_{ijk}$ for $i, j, k \in \{0, 1\}$, as used by~\cite{janson2008graph} previously, for simplification purposes.
\begin{definition}[Trivariate joint graphon entropy]Let $\mathbf{W}_{1,2,3}$ be as given in~\eqref{eq-system-graphon-trivariate}. We define the  joint graphon entropy among three exchangeable graphs $G_1(n,W^{(1)})$, $G_2(n,W^{(2)}), G_3(n,W^{(3)})$ as follows: 
    \begin{align*}
    \mathcal{H}(\mathbf{W}_{1,2,3})=-\iint_{[0,1]^2}&\Big\{p_{111}\log p_{111}+p_{110}\log p_{110}+p_{101}\log p_{101}\\
    &+p_{011}\log p_{011}+p_{100}\log p_{100}+p_{010}\log p_{010}+p_{001}\log p_{001}+p_{000}\log p_{000}\Big\}\ dx \ dy. 
    \end{align*}
\end{definition}
The joint graphon entropy for the $d-$variate setting can be analogously defined by specifying the full distribution of the $d-$variate graph limit model.
\subsection{Graphon interaction information}
Similarly to the bivariate case, the interaction information can be expressed as a linear combination of the individual entropies, the joint bivariate entropies, and the joint trivariate entropy. Unlike  mutual information, which is often straightforwardly interpreted as the Kullback-Leibler (KL) divergence between the joint distribution of two variables and the product of their marginal distributions, the trivariate mutual information among three random variables $X, Y, Z$ involves a more complex relationship, and is not defined as the KL divergence between the joint distribution of the three random variables and the products of the marginal distributions, respectively~\cite{cover1999elements}. It is defined as:  $$\mathrm{I}(X;Y;Z)=\mathcal{H}(X)+\mathcal{H}(Y)+\mathcal{H}(Z)-(\mathcal{H}(X;Y)+\mathcal{H}(X;Z)+\mathcal{H}(Y;Z))+\mathcal{H}(X;Y;Z).$$ 

For $d=3$, the interaction information equals the O-information as defined by \cite{rosas2019quantifying}. In their interpretation, negative values of the O-information, and thus the interaction information for $d=3$, indicate synergy, while positive values indicate redundancy. In the trivariate context, the interaction information quantifies not only shared information, but also the higher-order organizational structure among the three variables, which includes potential redundancies and synergies not apparent in lower-dimensional analyses. Authors of~\cite{timme2014synergy} illustrate the difference between synergy and redundancy with the following example: consider a basic system where any two random variables, $X_1$ and $X_2$, offer insights about a third random variable, $Y$. In essence, if the states of $X_1$ and $X_2$ are known, this gives us some knowledge about $Y$. Specifically, the information that cannot be obtained by knowing $X_1$ and $X_2$ individually but can be understood when both are considered together is described as being provided \emph{synergistically} by $X_1$ and $X_2$. This synergy represents the additional information gained by considering $X_1$ and $X_2$ jointly rather than separately. A similar concept applies to redundancy. Again, assume $X_1$ and $X_2$ give some information about $Y$. The overlapping information that $X_1$ and $X_2$ individually provide is considered to be redundant. Redundancy is the information that is shared by both $X_1$ and $X_2$.
\begin{remark}For Definitions~\ref{def-trivariate-mi}--\ref{def-cond-tc}, let $\mathbf{W}_{1,2,3}$ as given in~\eqref{eq-system-graphon-trivariate} denote the system of graphons resembling the parameters of the trivariate Bernoulli distribution and let $\mathbf{W}_{i,j}$ for $1\leq i,j \leq 3$ as given in~\eqref{eq-system-graphons-bivariate} denote the system of graphons resembling parameters of the bivariate Bernoulli distribution. 
\end{remark}

\begin{definition}[Graphon interaction information]\label{def-trivariate-mi}We define the graphon interaction information among three exchangeable graphs $G_1(n,W^{(1)})$, $G_2(n,W^{(2)})$, $G_3(n,W^{(3)})$ as follows:
    \begin{align*}
        \mathrm{I}_{\mathbf{W}_{1,2,3}}(W^{(1)};W^{(2)};W^{(3)})&=\mathcal{H}(W_1)+\mathcal{H}(W_2)+\mathcal{H}(W_3)\\
        &-(\mathcal{H}(\mathbf{W}_{1,2})+\mathcal{H}(\mathbf{W}_{1,3})+\mathcal{H}(\mathbf{W}_{2,3}))\\
        &+\mathcal{H}(\mathbf{W}_{1,2,3}).
    \end{align*}
\end{definition}
\begin{definition}[Conditional graphon mutual information] We define the conditional graphon mutual information between two exchangeable graphs $G_1(n,W^{(1)})$, $G_2(n,W^{(2)})$ given the generating mechanism of the third graph $G_3(n,W^{(3)})$ as follows:
\begin{equation*}
    \mathrm{I}_{\mathbf{W}_{1,2,3}}(W_1;W_2|W_3)=\mathcal{H}(\mathbf{W}_{1,3})+\mathcal{H}(\mathbf{W}_{2,3})-\mathcal{H}(W_3)-\mathcal{H}(\mathbf{W}_{1,2,3}).
\end{equation*}
\end{definition}
The following corollary follows from Theorem 2 in\cite{yeung1991new}.
\begin{cor}\label{cor:interaction-information-bounds} 
The graphon interaction information satisfies the following bounds:
  \begin{align*}
    -\min \{&\mathrm{I}_{\mathbf{W}_{1,2,3}}(W^{(2)};W^{(3)}|W^{(1)}), \mathrm{I}_{\mathbf{W}_{1,2,3}}(W^{(1)};W^{(3)}|W^{(2)}), \mathrm{I}_{\mathbf{W}_{1,2,3}}(W^{(1)};W^{(2)}|W^{(3)})\} \\
    & \quad \quad \quad \quad \quad \quad \quad \leq \mathrm{I}_{\mathbf{W}_{1,2,3}}(W^{(1)};W^{(2)};W^{(3)}) \leq \\
    \min \{&\mathrm{I}_{\mathbf{W}_{1,2}}(W^{(1)};W^{(2)}), \mathrm{I}_{\mathbf{W}_{1,3}}(W^{(1)};W^{(3)}), \mathrm{I}_{\mathbf{W}_{2,3}}(W^{(2)};W^{(3)})\}
\end{align*}
\end{cor}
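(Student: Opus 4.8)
The plan is to reduce the statement to the classical three-variable bounds of~\cite{yeung1991new} by recognising every graphon quantity involved as a pointwise integral of an ordinary Shannon quantity, and then combining two ingredients: a set of decomposition identities for the graphon interaction information, and the nonnegativity of the graphon mutual information and its conditional variant.

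First I would record the three decomposition identities
\begin{equation*}
\mathrm{I}_{\mathbf{W}_{1,2,3}}(W^{(1)};W^{(2)};W^{(3)})=\mathrm{I}_{\mathbf{W}_{1,2}}(W^{(1)};W^{(2)})-\mathrm{I}_{\mathbf{W}_{1,2,3}}(W^{(1)};W^{(2)}|W^{(3)}),
\end{equation*}
together with the two analogues obtained by permuting the roles of the three layers. Each follows by substituting Definitions~\ref{def-trivariate-mi} and the conditional graphon mutual information into the right-hand side and cancelling the common graphon-entropy terms $\mathcal{H}(W^{(i)})$ and $\mathcal{H}(\mathbf{W}_{i,j})$; this is a purely algebraic rearrangement of the entropy expansions and mirrors the classical identity $\mathrm{I}(X;Y;Z)=\mathrm{I}(X;Y)-\mathrm{I}(X;Y|Z)$.

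Second, I would establish that both $\mathrm{I}_{\mathbf{W}_{i,j}}(W^{(i)};W^{(j)})$ and $\mathrm{I}_{\mathbf{W}_{1,2,3}}(W^{(i)};W^{(j)}|W^{(k)})$ are nonnegative. The key observation is that, for each fixed $(x,y)\in[0,1]^2$, the entries of $\mathbf{W}_{i,j}$ and $\mathbf{W}_{1,2,3}$ are exactly the cell probabilities of a bivariate, respectively trivariate, Bernoulli distribution with parameter $\underline{W}(x,y)$, so the integrand of each graphon measure is the corresponding Shannon mutual information, respectively conditional mutual information, of that Bernoulli vector. Since Shannon (conditional) mutual information is nonnegative pointwise and integration over $[0,1]^2$ preserves nonnegativity, both graphon quantities are nonnegative.

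Third I would combine the two ingredients. For the upper bound, each decomposition identity gives $\mathrm{I}_{\mathbf{W}_{1,2,3}}(W^{(1)};W^{(2)};W^{(3)})=\mathrm{I}_{\mathbf{W}_{i,j}}(W^{(i)};W^{(j)})-\mathrm{I}_{\mathbf{W}_{1,2,3}}(W^{(i)};W^{(j)}|W^{(k)})\le \mathrm{I}_{\mathbf{W}_{i,j}}(W^{(i)};W^{(j)})$, because the subtracted conditional term is nonnegative; taking the minimum over the three pairs yields the stated upper bound. For the lower bound, the same identity gives $\mathrm{I}_{\mathbf{W}_{1,2,3}}(W^{(1)};W^{(2)};W^{(3)})\ge -\mathrm{I}_{\mathbf{W}_{1,2,3}}(W^{(i)};W^{(j)}|W^{(k)})$, since the pairwise term is nonnegative; taking the maximum over the three conditionings, which equals $-\min$ of the three conditional mutual informations, yields the lower bound. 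The main thing to get right is the pointwise Bernoulli interpretation in the second step, as this is what simultaneously justifies the nonnegativity claims and legitimises importing the bounds of~\cite{yeung1991new}; once that reduction is in place the combination in the third step is immediate and requires no estimate beyond monotonicity of the integral.
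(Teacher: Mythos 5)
Your proof is correct and follows essentially the same route as the paper, which simply cites Theorem 2 of \cite{yeung1991new}: the decomposition $\mathrm{I}_{\mathbf{W}_{1,2,3}}(W^{(1)};W^{(2)};W^{(3)})=\mathrm{I}_{\mathbf{W}_{i,j}}(W^{(i)};W^{(j)})-\mathrm{I}_{\mathbf{W}_{1,2,3}}(W^{(i)};W^{(j)}|W^{(k)})$ combined with nonnegativity of the pairwise and conditional mutual informations. The one substantive step you supply that the paper leaves implicit is the verification that the \emph{graphon} versions of these quantities are indeed nonnegative, via the observation that each integrand is the Shannon (conditional) mutual information of the multivariate Bernoulli vector at a fixed $(x,y)$; this is exactly the right way to transfer Yeung's scalar bounds to the graphon setting.
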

Combining the interpretations of synergy and redundancy form~\cite{rosas2019quantifying} and the bounds given in Corollary~\ref{cor:interaction-information-bounds} that directly follow from~\cite{yeung1991new}, we can interpret the graphon interaction information as follows: Graphon interaction information that is close to the lower bound indicates a highly synergistic interaction among the three graphs, where the combined effect of the graphs is greater than the sum of their individual effects. Conversely, graphon interaction information that approaches the upper bound suggests highly redundant interactions, where the total information provided by all three graphs is no more than the information provided by the most informative individual graph or pair of graphs. A similar interpretation can be achieved by normalizing the graphon interaction information to the range $[-1,1]$, or alternatively to the normalized versions of the bounds provided by Corollary~\ref{cor:interaction-information-bounds} using simple algebra. We will provide the upper and lower bounds whenever disucssing the implications of the interaction information.

The graphon interaction can be extended for the setting of $d$ graphs, where $d>3$ as follows
\begin{equation}
 \mathrm{I}_{\mathbf{W}_{1,\ldots,d}}(W^{(1)};\ldots;W^{(d)})=\sum_{i=1}^d \mathcal{H}(W^{(i)})-\sum_{1\leq i <j\leq n}\mathcal{H}(\mathbf{W}_{i,j})+\ldots+(-1)^d \mathcal{H}(\mathbf{W}_{1,\ldots,d}),
\end{equation}
where $\mathcal{H}_{\mathbf{W}_{1,\ldots,d}}$ denotes the joint graphon entropy among $d$ exchangeable graphs $G_1(n,W^{(1)})$, $\ldots,$ $G_d(n,W^{(d)})$.

\subsection{Graphon total correlation, dual total correlation and O--information}
We introduce the graphon total correlation for three graphs to be the relative entropy between the joint distribution among the three graphs and the product of the individual marginals.   Rényi's two desiderata~\cite{renyi1959measures} for effective dependence measures, applicable to our setting, are: \emph{a)} the measure of dependence equals 0 if the random variables are independent or conditionally independent, and it ranges between 0 and 1 in all other cases; \emph{b)} it remains unchanged with individual one-to-one transformations applied to the variables. As noted by~\cite{joe1989relative}, a natural approach to fulfill condition \emph{a)} involves defining a `distance' between a joint distribution and a distribution that represents independence or conditional independence. Relative entropy, with the necessary normalization, is also noted by~\cite{joe1989relative} to satisfy both desiderata \emph{a)} and \emph{b)}. Consequently, we utilize the relative entropy between the joint distribution and the product of the marginal distributions, as well as the conditional relative entropy. 
Unlike interaction information, total correlation (TC) and dual total correlation (DTC) are nonnegative and satisfy monotonicity. However, as noted by
~\cite{austin2018multi}, TC and DTC are members of a large family of measures that are obtained by various entropy gaps and are a result of the Shearer's inequality first published in~\cite{chung1986some}.  TC can be simplified and expressed in terms of entropies, as given in Definition~\ref{def-trivariate-tc}. It has been shown by~\cite{austin2018multi,austin2018measure} that DTC plays an important role in a new decomposition theorem for measures on product spaces.

\begin{definition}[Graphon total correlation]\label{def-trivariate-tc} We define $\mathrm{TC}_{\mathbf{W}_{1,2,3}}(W^{(1)},W^{(2)},W^{(3)})$, the graphon total correlation among three exchangeable random graphs $G_1(n,W^{(1)})$, $G_2(n,W^{(2)})$ and $G_3(n,W^{(3)})$ as follows:
    \begin{equation*}
\mathrm{TC}_{\mathbf{W}_{1,2,3}}(W^{(1)},W^{(2)},W^{(3)})=\sum_{i=1}^3 \mathcal{H}(W^{(i)})-\mathcal{H}(\mathbf{W}_{1,2,3}).
    \end{equation*}
\end{definition}

The graphon total correlation can be extended for the setting of $d$ graphs, $G_1(n,W^{(1)})$, $\ldots,$ $G_d(n,W^{(d)})$ where $d>3$ as follows
\begin{equation}\label{eq-total-correlation-dvariate}
   \mathrm{TC}_{\mathbf{W}_{1,\ldots,d}}(W^{(1)},\ldots,W^{(d)})=\sum_{i=1}^d \mathcal{H}(W^{(i)})-\mathcal{H}(\mathbf{W}_{1,\ldots,d})
\end{equation}
The term $\mathcal{H}(\mathbf{W}_{1,\ldots,d})$ represents the actual amount of information contained within the $d$ graph generating mechanisms.
The difference between these terms highlights the total redundancy among the set of graph generating mechanisms. A graphon total correlation close to zero indicates that the graphs in the system are largely statistically independent, implying that knowledge of one graph provides little or no insight into the others.
The maximum graphon total correlation for a fixed set of individual entropies $\mathcal{H}(W^{(1)}),\ldots,\mathcal{H}(W^{(d)})$ is given as
\begin{equation*}\label{eq:total-correlation-dvariate}
\mathrm{TC}^{\mathrm{max}}_{\mathbf{W}_{1,\ldots,d}}(W^{(1)},\ldots,W^{(d)})=\sum_{i=1}^d \mathcal{H}(W^{(d)})-\max_{i}\mathcal{H}(W^{(i)}).
\end{equation*}
We could normalize the graphon total correlation defined in \eqref{eq-total-correlation-dvariate} to lie in the range $[0,1]$ as follows:
\begin{align*}
\nonumber \mathrm{TC}^{\mathrm{N}}_{\mathbf{W}_{1,\ldots,d}}(W^{(1)},\ldots,W^{(d)})=\frac{\mathrm{TC}_{\mathbf{W}_{1,\ldots,d}}(W^{(1)},\ldots,W^{(d)})}{\sum_{j}\mathcal{H}(W^{(j)})-\max_j \mathcal{H}(W^{(j)})}
\end{align*}
The reason for the transformation can be seen from the bivariate version, where $\mathrm{TC}^{\mathrm{N}}_{\mathbf{W}_{1,2}}(W^{(1)},W^{(2)})$ can be written as
\begin{equation*}
    [\mathcal{H}(W^{(1)})+\mathcal{H}(W^{(2)})-\mathcal{H}(\mathbf{W}_{1,2})]/\min[\mathcal{H}(W^{(1)}),\mathcal{H}(W^{(2)}),
\end{equation*}
which is the normalized graphon mutual information between two exchangeable random graphs. The numerator can be written in terms of conditional entropy, see~\cite{cover1999elements}, and if without loss of generality (wlog) we consider that $\mathcal{H}(W^{(1)})<\mathcal{H}(W^{(2)})$, then $\mathrm{TC}^{\mathrm{N}}_{\mathbf{W}_{1,2}}(W^{(1)},W^{(2)})$ can be interpreted as the relative reduction of uncertanity in the generating mechanism $W^{(1)}$ of $G_1(n,W^{(1)})$ given the generating mechanism $W^{(2)}$ of $G_2(n,W^{(2)})$. 
\begin{definition}[Conditional graphon total correlation]\label{def-cond-tc} We define $\mathrm{TC}_{\mathbf{W}_{1,2,3}}(W^{(1)},W^{(2)}|W^{(3)})$, the conditional graphon total correlation 
 between two exchangeable random graphs $G_1(n,W^{(1)})$, $G_2(n,W^{(2)})$ given the generating mechanism of the third graph $G_3(n,W^{(3)})$ as follows:
    \begin{align*}
       &\mathrm{TC}_{\mathbf{W}_{1,2,3}}(W^{(1)},W^{(2)}|W^{(3)})=\mathcal{H}(\mathbf{W}_{1,3})+\mathcal{H}(\mathbf{W}_{2,3})-\mathcal{H}(\mathbf{W}_{1,2,3})-\mathcal{H}(W^{(3)}).
    \end{align*}
\end{definition}
Similarly, the conditional graphon total correlation can be extended to the setting of $d-$exchangeable random graphs $G_1(n,W^{(1)})$, $\ldots$, $G_d(n,W^{(d)})$ as follows:
\begin{align*}
    \mathrm{TC}_{\mathbf{W}_{1,\dots,d}}(W^{(1)},\ldots,W^{(d-1)}|W^{(d)})=\sum_{i=1}^{d-1}\mathcal{H}(\mathbf{W}_{i,d})-\mathcal{H}(\mathbf{W}_{1,\ldots,d})-(d-2)\mathcal{H}(W^{(d)}).
    \end{align*}
As specified by~\cite{joe1989relative} for the conditional relative entropy for random variables, the maximum value that the graphon dual total correlation attains thus is $\min_{i \in [d-1]} \{\mathcal{H}(\mathbf{W}_{i,d}-\mathcal{H}(W^{(i)})\}$. Dividing by this maximum bound will yield a value in the range $[0,1]$.
\begin{definition} We define $\mathrm{DTC}(\mathbf{W}_{1,\ldots,d})$, the graphon dual total correlation among $d$ exchangeable random graphs $G_1(n,W^{(1)}), \ldots,G_d(n,W^{(d)})$ as follows:
    \begin{align*}
\mathrm{DTC}(\mathbf{W}_{1,\ldots,d})=\mathcal{H}(\mathbf{W}_{1,\ldots,d})-(\sum_{i=1}^d \mathcal{H}(\mathbf{W}_{1,\ldots,d})-\mathcal{H}(\mathbf{W}_{1,\ldots,i-1,i+1,\ldots,d})),
    \end{align*}
    where $\mathcal{H}(\mathbf{W}_{1,\ldots,i-1,i+1,\ldots,d})$ denotes the joint graphon entropy among the graphs $G_1(n,W^{(1)})$,$\ldots$,\\$G_{i-1}(n,W^{(i-1)})$,$G_{i+1}(n,W^{(i+1)})$,$\ldots$,$G_d(n,W^{(d)})$, excluding $G_i(n,W^{(i)})$ for $i\in [d]$.
\end{definition}
The graphon dual total correlation, like the original dual total correlation defined by~\cite{han1978nonnegative}, is nonnegative and bounded by the joint entropy. Therefore, it can be normalized to lie in the range $[0,1]$. Both total correlation and dual total correlation are nonnegative and are zero only when variables are independent. Since TC is a KL divergence between the joint $d$-variate distribution and the product of its marginal distributions, a condition like $\mathrm{TC} = o(d)$ implies that the joint distribution of the $d$-variate graph limit model closely approximates a product measure in KL distance, as per~\cite{austin2018measure}. However, DTC itself is not directly a KL divergence, but since $\mathrm{TC} \leq (n-1) \cdot \mathrm{DTC}$, the analysis for TC can be extended to DTC when $\mathrm{DTC} = o(1)$. If $\mathrm{DTC} = o(n)$, then the joint distribution closely resembles a ``low-complexity'' mixture of product measures in KL distance.

 Given previous discussions on how interaction information reveals synergy and redundancy in systems, the $O$-information provides a measure for redundancy and synergy involving more than three random variables. Building on the work by~\cite{rosas2019quantifying}, which quantifies higher-order dependencies using $O$-information, we extend this measure to graphs and define the graphon $O$-information as follows:
\begin{definition}[Graphon O-information]  We define $\Omega_{\mathbf{W}_{1,\dots,d}}(W^{(1)},\ldots,W^{(d)})$, \\ the graphon $O-$information for $d$ exchangeable random graphs $G_1(n,W^{(1)})$, $\ldots$, $G_{d}(n,W^{(d)})$ as follows:
    \begin{align*}
   \Omega_{\mathbf{W}_{1,\dots,d}}(W^{(1)},\ldots,W^{(d)})=\mathrm{TC}_{\mathbf{W}_{1,\dots,d}}(W^{(1)},\ldots,W^{(d)})-\mathrm{DTC}_{\mathbf{W}_{1,\dots,d}}(W^{(1)},\ldots,W^{(d)}).
    \end{align*}
\end{definition}
The properties of the $O$-information discussed in~\cite{rosas2019quantifying} directly translate to the graphon $O$-information. Consequently, scenarios dominated by redundancy, where three or more graphs contain copies of the same information, as well as synergy-dominated systems, characterized by high-order patterns that cannot be discerned from lower-order marginals, are effectively distinguished by the graphon $O$-information. This metric captures the dominant characteristics of multivariate interdependency. We also reiterate that in the context of three graphs, the graphon $O$-information corresponds to graphon interaction information.

\section{Estimation of multivariate graphon information--theoretic measures}\label{sec-est-bivariate}
\subsection{Estimation of graphon mutual information}
In this section, we define an estimator for the graphon mutual information between two exchangeable random graphs, $G_1(n, W^{(1)})$ and $G_2(n, W^{(2)})$. This graphon mutual information is estimated based on the entropies of $W^{(1)}$, $W^{(2)}$, and $\mathbf{W}_{1,2}$. Building on our previous work, where we provided a consistent estimator of graphon entropy for any exchangeable random graph with a smooth graphon representative~\cite{skeja2023entropy} we now extend this methodology to estimate the joint graphon entropy. This involves computing the entropy of the estimated graphon system $\widehat{\mathbf{W}}_{1,2}$, for which we also provide its convergence rate. We define the joint graphon entropy estimator between two graphs $G_1(n,W^{(1)})$ and $G_2(n,W^{(2)})$ as follows:\begin{equation}\label{eq:bivariate-est-entropy}
    \widehat{\mathcal{H}}(\mathbf{W}_{1,2})=\mathcal{H}(\widehat{\mathbf{W}}_{1,2}),
\end{equation} where \begin{equation}\label{eq-joint-entropy-estimator}
    \widehat{\mathbf{W}}_{1,2}=[\widehat{W}^{(12)} \quad (\widehat{W}^{(1)}-\widehat{W}^{(12)}) \quad (\widehat{W}^{(1)}-\widehat{W}^{(12)}) \quad (1-\widehat{W}^{(1)}-\widehat{W}^{(2)}+\widehat{W}^{(12)})].
\end{equation} 
\subsubsection{Estimation of the multivariate graphon}\label{subsec-estimation-joint-graphon}
We will estimate $W^{(1)}(x,y)$, $W^{(2)}(x,y)$ and $W^{(12)}(x,y)$, up to re-arrangement of the axes, given the adjacency matrices $A^{(1)}$ and $A^{(2)}$, both of size $n\times n$ using a correlated two-layer stochastic block model~\cite{pamfil2020inference} with a single community vector and size $h$. The community vector $z$ of length $n$ groups nodes of the graphs that should lie on the same group, and we let $\mathcal{Z}_k$ denote all possible community vectors that respect $n=hk+r$, where $h$ denotes the community size or bandwidth, $k$ the number of communities, and $r$ is a remainder term between $0$ and $h-1$. All components of $z$ take values in $[k]$. The main difficulty is in estimating the community vector $z$. Following the approach adopted in~\cite{Olhede_2014} for the case of a single graph (see details in Appendix II ), we estimate the shared community membership vector $z$ between adjacency matrices $A^{(1)}$ and $A^{(2)}$ by the method of profile maximum likelihood as follows:

\begin{align}\label{z-est}
   \nonumber \hat{z}:=\argmax_{z \in \mathcal{Z}_k}&\sum_{i<j}\{A_{ij}^{(1)} A_{ij}^{(2)}\log (\Bar{A}_{z_i z_j}^{(12)})+A_{ij}^{(1)}(1-A_{ij}^{(2)})\log(\Bar{A}_{z_i z_j}^{(1)}-\Bar{A}_{z_i z_j}^{(12)})\\
    &+(1-A_{ij}^{(1)})A_{ij}^{(2)}\log(\Bar{A}_{z_i z_j}^{(2)}-\Bar{A}_{z_i z_j}^{(12)})+(1-A_{ij}^{(1)})(1-A_{ij}^{(2)})\log(1-\Bar{A}_{z_i z_j}^{(1)}-\Bar{A}_{z_i z_j}^{(2)}+\Bar{A}_{z_i z_j}^{(12)})\},
\end{align}
where for all $1\leq a,b \leq k$,
\begin{align}\label{theta-est}
&\Bar{A}_{ab}^{(d)}=\frac{A^{(d)}_{ij}I(\hat{z}_i=a)I(\hat{z}_j=b)}{I(\hat{z}_i=a)I(\hat{z}_j=b)},
\end{align}
for $(d)=(1),(2),(12)$, and $A^{(12)}$ is as given in Equation~\eqref{A-12}. See Appendix II for details of the estimation procedure. Since all adjacency matrices are symmetric, we have $\Bar{A}^{(d)}_{ab}=\Bar{A}^{(d)}_{ba}$.
Combining \eqref{z-est}-\eqref{theta-est}, and following~\cite{Olhede_2014}, we define the estimators of $W^{{(1)}}$, $W^{{(2)}}$ and $W^{{(12)}}$ as follows
\begin{align}
    &\widehat{W}^{(1)}(x,y;h):=\Bar{A}^{(1)}_{\min(\lceil nx/h\rceil,k) \min(\lceil ny/h\rceil,k)}, \quad 0<x,y<1,\\
    &\widehat{W}^{(2)}(x,y;h):=\Bar{A}^{(2)}_{\min(\lceil nx/h\rceil,k) \min(\lceil ny/h\rceil,k)}, \quad 0<x,y<1,\\
    &\widehat{W}^{(12)}(x,y;h):=\Bar{A}^{(12)}_{\min(\lceil nx/h\rceil,k) \min(\lceil ny/h\rceil,k)}, \quad 0<x,y<1.
\end{align}
 Next, we can adopt the rate-optimal graphon estimation result from~\cite{gao2015rate,klopp2017oracle} for each of $\widehat{W}^{(1)}$, $\widehat{W}^{(2)}$, and $\widehat{W}^{(12)}$ since all  $A^{(1)}$, $A^{(2)}$, and $A^{(12)}$ are valid adjacency matrices.

\begin{cor}\label{cor-joint-graphon}
 Assume $A^{(d)}$ is generated from a $\alpha-$H\"older graphon $W^{(d)}\in \mathcal{F}^{\alpha}(M)$ for $(d)=(1),(2),(12)$ following Definition~\ref{assump:smooth}. Then, the following inequality holds with probability at least $1-\exp(-C'n)$:
\begin{align*}
    \inf_{\tau \in \mathcal{M}}\iint_{(0,1)^2}|W^{(d)}\left(\tau(x),\tau(y)\right)-\widehat{W}^{(d)}(x,y)|^2 \ dx dy \leq C\left(n^{-2\alpha/(\alpha+1)}+\frac{ \log n}{n}+\frac{1}{n^{\alpha}}\right), 
\end{align*}
where $\mathcal{M}$ is the set of all measure--preserving bijections $\tau:[0,1]\to [0,1]$, $C'>0$ only depends on $M$ and the universal constant $C>0$.
\end{cor}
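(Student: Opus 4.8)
The plan is to reduce Corollary~\ref{cor-joint-graphon} to the known rate-optimal graphon estimation results of~\cite{gao2015rate,klopp2017oracle}, applied separately to each of the three graphons $W^{(1)}$, $W^{(2)}$, and $W^{(12)}$. The key observation is that each of $A^{(1)}$, $A^{(2)}$, and $A^{(12)}$ is a valid $\{0,1\}$-valued symmetric adjacency matrix: this is immediate for $A^{(1)}$ and $A^{(2)}$, and for the Hadamard product $A^{(12)}_{ij}=A^{(1)}_{ij}A^{(2)}_{ij}$ it remains a symmetric binary matrix with zero diagonal (since the factors are), so it too is the adjacency matrix of a simple graph. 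Hence each $A^{(d)}$ is generated by an exchangeable random graph whose graphon representative is $W^{(d)}$, and for $(d)=(12)$ this graphon is exactly the joint-probability graphon $W^{(12)}(x,y)=\Prb(A^{(1)}_{ij}=1,A^{(2)}_{ij}=1\mid\xi_i=x,\xi_j=y)$.

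The next step is to verify that the smoothness hypothesis transfers to all three graphons. By assumption $W^{(1)},W^{(2)},W^{(12)}\in\mathcal{F}^{\alpha}(M)$ individually, so I would simply invoke the H\"older assumption componentwise rather than deriving it for $W^{(12)}$ from the marginals (the statement already posits $W^{(d)}\in\mathcal{F}^{\alpha}(M)$ for each $(d)$). With membership in the H\"older class established, the estimator defined in~\eqref{z-est}--\eqref{theta-est} coincides, for each fixed layer index $(d)$, with the single-graph profile-likelihood/least-squares block estimator analyzed in~\cite{gao2015rate,klopp2017oracle}: the shared community vector $\hat z$ is estimated once, and then each $\widehat{W}^{(d)}$ is the corresponding block-averaged piecewise-constant reconstruction. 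I would note that sharing a single $\hat z$ across layers does not degrade the per-layer bound, since the estimation of $z$ is driven by the joint profile likelihood in~\eqref{z-est}, which is at least as informative as any single-layer likelihood for recovering the common latent partition.

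**The main obstacle** is the rate bookkeeping: one must confirm that the three-term bound $C\big(n^{-2\alpha/(\alpha+1)}+\tfrac{\log n}{n}+n^{-\alpha}\big)$ is precisely the oracle rate delivered by~\cite{gao2015rate,klopp2017oracle} for $\alpha$-H\"older graphons, where the $n^{-2\alpha/(\alpha+1)}$ term is the nonparametric estimation rate, the $\tfrac{\log n}{n}$ term arises from the cost of estimating the $k\approx n/h$ block memberships via profile likelihood, and the $n^{-\alpha}$ term captures the bias incurred by approximating a H\"older function by a block-constant (stochastic block model) function at bandwidth $h$. I would choose the bandwidth $h$ (equivalently the number of blocks $k$) to balance these contributions, exactly as in the cited single-graph analysis, and then take the minimum over measure-preserving bijections $\tau\in\mathcal{M}$ to absorb the inherent non-identifiability of the latent labeling. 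Finally, the high-probability statement with probability at least $1-\exp(-C'n)$ follows from the concentration bound in~\cite{gao2015rate}, and since the claim is made separately for each of the three fixed indices $(d)$, no union bound across layers changes the exponential order (or at worst multiplies the failure probability by the constant $3$, which is absorbed into $C'$). Thus the corollary is obtained by direct application of the cited results to each layer, with the only care needed being the verification that $A^{(12)}$ is a legitimate adjacency matrix and that the shared partition estimator retains the single-graph rate.
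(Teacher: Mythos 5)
Your proposal matches the paper's own justification exactly: the paper gives no separate proof of this corollary, but argues in one line that the rate-optimal graphon estimation results of Gao--Lu--Zhou and Klopp et al.\ can be applied to each of $\widehat{W}^{(1)}$, $\widehat{W}^{(2)}$, and $\widehat{W}^{(12)}$ separately, precisely because each $A^{(d)}$ (including the Hadamard product $A^{(12)}$) is a valid adjacency matrix generated from an $\alpha$-H\"older graphon assumed in $\mathcal{F}^{\alpha}(M)$. Your additional remarks on the shared community vector $\hat z$ and the rate bookkeeping are points the paper leaves implicit, but they do not change the route.
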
 
As previously discussed in Section~\ref{subsec-estimation-joint-graphon} and Corollary~\ref{cor-joint-graphon}, $\widehat{W}^{(1)}$, $\widehat{W}^{(2)}$ and $\widehat{W}^{(12)}$ can be consistently estimated, thus yielding to a consistent estimator of $\mathbf{W}_{1,2}$. Once we have estimated $\widehat{W}^{(1)}$, $\widehat{W}^{(2)}$ and $\widehat{W}^{(12)}$ from realizations of $A^{(1)}$, $A^{(2)}$ and $A^{(12)}$, respectively, we can form the joint graphon entropy estimator using Equations~\eqref{eq:bivariate-est-entropy}--\eqref{eq-joint-entropy-estimator} and Definition~\ref{def-joint-graphon-entropy}.
\begin{theorem}\label{thm-conv-rate} Let $\mathbf{W}_{1,2}$ denote the system of graphons as defined in~\eqref{eq-system-graphons-bivariate}, and let $W^{(1)}, W^{(2)},$ and $W^{(12)}$ be functions in $\mathcal{F}^{\alpha}(M)$, where each function is $\alpha$-Hölder continuous. Then, the following inequality holds with probability at least $1-\exp(-C'n)$:

    \begin{equation*}
        |\widehat{\mathcal{H}}(\mathbf{W}_{1,2})-\mathcal{H}(\mathbf{W}_{1,2})|\leq  C\left(n^{-2\alpha/(\alpha+1)}+\frac{\log n}{n}+\frac{1}{n^{\alpha}}\right)^{1/2},
    \end{equation*}
 where $C'>0$ only depends on $M$ and the universal constant $C>0$.
\end{theorem}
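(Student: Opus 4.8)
The plan is to reduce the entropy-functional error to the $L^2$ graphon estimation rate already established in Corollary~\ref{cor-joint-graphon}, paying one square root in passing from an $L^2$ to an $L^1$ control. Writing $\phi(t)=-t\log t$, Definition~\ref{def-joint-graphon-entropy} expresses $\mathcal{H}(\mathbf{W}_{1,2})=\iint_{[0,1]^2}\sum_{c}\phi(p_c)\,dx\,dy$, where $c$ indexes the four cells of Table~\ref{table} and each $p_c$ is an affine function of $(W^{(1)},W^{(2)},W^{(12)})$ with coefficients in $\{-1,0,1\}$. By \eqref{eq:bivariate-est-entropy}--\eqref{eq-joint-entropy-estimator}, the estimator $\widehat{\mathcal{H}}(\mathbf{W}_{1,2})$ is the same functional evaluated at the plug-in cell probabilities $\widehat p_c$ built from $(\widehat W^{(1)},\widehat W^{(2)},\widehat W^{(12)})$, so the triangle inequality gives
\begin{equation*}
|\widehat{\mathcal{H}}(\mathbf{W}_{1,2})-\mathcal{H}(\mathbf{W}_{1,2})|\le \iint_{[0,1]^2}\sum_c |\phi(\widehat p_c)-\phi(p_c)|\,dx\,dy.
\end{equation*}

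First I would bound the cell errors by the graphon errors: since each $p_c$ is affine with unit coefficients, pointwise $|\widehat p_c-p_c|\le \Delta(x,y)$, where $\Delta:=|\widehat W^{(1)}-W^{(1)}|+|\widehat W^{(2)}-W^{(2)}|+|\widehat W^{(12)}-W^{(12)}|$. This reduces the problem to bounding $\iint \sum_c|\phi(\widehat p_c)-\phi(p_c)|$ by a function of $\iint\Delta^2$, which Corollary~\ref{cor-joint-graphon} controls.

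The hard part is the modulus of continuity of $\phi(t)=-t\log t$, which is continuous but not Lipschitz at $t=0$ (its derivative $-\log t-1$ blows up as $t\downarrow 0$), so a naive mean-value bound fails for the boundary cells. Instead I would use the least concave majorant $\bar\omega$ of the modulus of continuity of $\phi$, which is nondecreasing, satisfies $\bar\omega(\delta)\le C\delta\log(1/\delta)$ for small $\delta$, and upper-bounds $|\phi(a)-\phi(b)|$ by $\bar\omega(|a-b|)$ (equivalently, one may invoke the Fannes-type continuity estimate for the four-cell Shannon entropy). Since $|\widehat p_c-p_c|\le\Delta$, monotonicity gives $\sum_c|\phi(\widehat p_c)-\phi(p_c)|\le 4\bar\omega(\Delta)$; integrating and using Jensen's inequality for the concave $\bar\omega$ over the unit-measure domain yields $\iint \bar\omega(\Delta)\le \bar\omega(\iint\Delta)$, and Cauchy--Schwarz gives $\iint\Delta\le(\iint\Delta^2)^{1/2}$. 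With $\iint\Delta^2\le C r_n$ from Corollary~\ref{cor-joint-graphon}, where $r_n=n^{-2\alpha/(\alpha+1)}+\tfrac{\log n}{n}+n^{-\alpha}$, this produces the claimed $r_n^{1/2}$ factor, the residual logarithmic factor from $\bar\omega$ being absorbed into the constant $C$. This near-boundary analysis is the crux; away from the boundary $\phi$ is Lipschitz and the bound is routine.

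Two bookkeeping points finish the argument. First, Corollary~\ref{cor-joint-graphon} is stated with a separate $\inf_\tau$ for each of the three graphons, whereas the integrand couples them at the same $(x,y)$; because the estimators in \eqref{z-est}--\eqref{theta-est} are built from a single shared community assignment $\hat z$ and the three graphons share the latent vector $\xi$, one common measure-preserving $\tau$ aligns all three simultaneously, so the pointwise cell bounds compose. Second, each of the three $L^2$ bounds holds with probability at least $1-\exp(-C'n)$, so by a union bound their intersection holds with probability at least $1-3\exp(-C'n)$; absorbing the factor $3$ into $C'$ yields the stated high-probability guarantee.
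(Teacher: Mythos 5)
Your overall route is the same as the paper's: both arguments reduce the entropy-functional error to the $L^2$ graphon estimation rate of Corollary~\ref{cor-joint-graphon} and pay a square root via Cauchy--Schwarz, and your two bookkeeping points (a single shared alignment $\tau$ coming from the common $\hat z$ and $\xi$, and the union bound over the three graphons) are legitimate details that the paper's proof leaves implicit. The difference is in how the non-Lipschitz point of $\phi(t)=-t\log t$ at $t=0$ is handled, and that is where your proposal has a genuine gap.

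The step ``the residual logarithmic factor from $\bar\omega$ being absorbed into the constant $C$'' is not correct. With $\bar\omega(\delta)\le C\delta\log(1/\delta)$ and $\delta=\bigl(\iint\Delta^2\bigr)^{1/2}\le C\,r_n^{1/2}$ where $r_n=n^{-2\alpha/(\alpha+1)}+\tfrac{\log n}{n}+n^{-\alpha}$, your chain of inequalities ends at $C\,r_n^{1/2}\log\bigl(1/r_n^{1/2}\bigr)\asymp r_n^{1/2}\log n$, since $\log(1/r_n^{1/2})$ grows like $\tfrac{\alpha}{\alpha+1}\log n$ rather than staying bounded. This loss is not an artifact of your method: if some cell probability $p_c$ vanishes on a set of positive measure while $\widehat p_c\asymp r_n^{1/2}$ there, then $\iint\Delta^2\asymp r_n$ yet $\iint|\phi(\widehat p_c)-\phi(p_c)|\asymp r_n^{1/2}\log(1/r_n)$, so the extra $\log n$ is unavoidable without further assumptions. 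What the paper actually does is Taylor-expand $\log\widehat W^{(12)}$ around a reference value $W^{(12)}_0$ and bound $W^{(12)}\log(\widehat W^{(12)}/W^{(12)})$ by $\widehat W^{(12)}-W^{(12)}$; both steps tacitly require the cell probabilities to be bounded away from zero, which makes $\phi$ Lipschitz on the relevant range and yields the clean $r_n^{1/2}$ rate directly from Cauchy--Schwarz, with no concave-majorant detour. To match the stated theorem you must either import that implicit positivity assumption (in which case your ``routine'' Lipschitz case is the whole proof) or accept the weaker bound $C\,r_n^{1/2}\log n$; as written, your argument proves only the latter.
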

\begin{proof}
    See Appendix I. B.
\end{proof}
Building on the joint graphon entropy estimator, the graphon mutual information estimator naturally follows. We define $\widehat{\mathrm{I}}_{\mathbf{W}_{1,2}}(W^{(1)}; W^{(2)})$ as an estimator for the mutual information of the bivariate graph limit model. This is given by $\widehat{\mathrm{I}}_{\mathbf{W}_{1,2}}(W_1;W_2) = \widehat{\mathcal{H}}(W^{(1)}) + \widehat{\mathcal{H}}(W^{(2)}) - \widehat{\mathcal{H}}(\mathbf{W}_{1,2})$, where $\widehat{\mathcal{H}}(W^{(i)})$ is the graphon entropy estimator for $W^{(i)}$, $i=1,2$.
\begin{cor}\label{cor:mi-conv-rate} Assume the conditions of Theorem \ref{thm-conv-rate} hold.  Then, the following inequality holds with probability at least $1-\exp(-C'n)$:
    \begin{align*}
        |\widehat{\mathrm{I}}_{\mathbf{W}_{1,2}}(W_1;W_2)-\mathrm{I}_{\mathbf{W}_{1,2}}(W_1;W_2)|\leq  C\left(n^{-2\alpha/(\alpha+1)}+\frac{\log n}{n}+\frac{1}{n^{\alpha}}\right)^{1/2},
    \end{align*}
  where $C'>0$ only depends on $M$ and the universal constant $C>0$.
\end{cor}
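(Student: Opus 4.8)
The plan is to reduce the mutual information error to a sum of graphon entropy errors and then invoke Theorem~\ref{thm-conv-rate} together with its univariate analogue. Since the estimator is the linear combination $\widehat{\mathrm{I}}_{\mathbf{W}_{1,2}}(W_1;W_2)=\widehat{\mathcal{H}}(W^{(1)})+\widehat{\mathcal{H}}(W^{(2)})-\widehat{\mathcal{H}}(\mathbf{W}_{1,2})$ and the target $\mathrm{I}_{\mathbf{W}_{1,2}}(W_1;W_2)$ is the same combination of the exact entropies, the triangle inequality gives immediately
\[
\bigl|\widehat{\mathrm{I}}_{\mathbf{W}_{1,2}}(W_1;W_2)-\mathrm{I}_{\mathbf{W}_{1,2}}(W_1;W_2)\bigr|\le\bigl|\widehat{\mathcal{H}}(W^{(1)})-\mathcal{H}(W^{(1)})\bigr|+\bigl|\widehat{\mathcal{H}}(W^{(2)})-\mathcal{H}(W^{(2)})\bigr|+\bigl|\widehat{\mathcal{H}}(\mathbf{W}_{1,2})-\mathcal{H}(\mathbf{W}_{1,2})\bigr|.
\]
The first step is therefore to control each of the three terms on the right-hand side at the common rate $r_n:=\bigl(n^{-2\alpha/(\alpha+1)}+\log n/n+n^{-\alpha}\bigr)^{1/2}$.

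The joint term is bounded by $C\,r_n$ directly by Theorem~\ref{thm-conv-rate}. For the two univariate terms I would argue that each $\widehat{\mathcal{H}}(W^{(i)})$ is exactly the single-graph graphon entropy estimator analysed in~\cite{skeja2023entropy}, whose consistency at rate $r_n$ follows by specialising the argument behind Theorem~\ref{thm-conv-rate} to one graphon: Corollary~\ref{cor-joint-graphon} applied with $(d)=(1)$ and $(d)=(2)$ furnishes the squared-$L^2$ graphon rate, and the same stability estimate for the entropy functional $\mathcal{H}(\cdot)$ used inside Theorem~\ref{thm-conv-rate} converts that squared-$L^2$ error into the bound $C\,r_n$. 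A point worth recording is that the infimum over measure-preserving bijections $\tau\in\mathcal{M}$ appearing in Corollary~\ref{cor-joint-graphon} is harmless here, because $\mathcal{H}$ is invariant under such relabelings: replacing $W^{(i)}$ by $W^{(i)}\circ(\tau\times\tau)$ leaves $\mathcal{H}(W^{(i)})$ unchanged, so the aligned $L^2$ guarantee transfers verbatim to the entropy error.

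The three required entropy bounds are all deterministic consequences of the three graphon $L^2$ guarantees of Corollary~\ref{cor-joint-graphon} for $(d)=(1),(2),(12)$, since every term above is a function of $(\widehat{W}^{(1)},\widehat{W}^{(2)},\widehat{W}^{(12)})$. These three graphon estimation events each hold with probability at least $1-\exp(-C'n)$, so by a union bound they hold simultaneously with probability at least $1-3\exp(-C'n)$, which is at least $1-\exp(-C'n)$ after absorbing the factor $3$ into a slightly smaller constant for $n$ large, matching the stated probability. On this event each of the three terms is at most a constant multiple of $r_n$, hence their sum is at most $3C\,r_n$, and absorbing the factor $3$ into the universal constant $C$ yields the claimed inequality.

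The genuinely non-trivial ingredient is the stability of the binary-entropy integrand $h$, which fails to be Lipschitz at the endpoints $\{0,1\}$ and makes the passage from the squared-$L^2$ graphon rate of Corollary~\ref{cor-joint-graphon} to the square-root rate $r_n$ delicate; however, this has already been established inside Theorem~\ref{thm-conv-rate}, so for the corollary itself the remaining work is purely the triangle-inequality decomposition, the union bound over the three estimation events, and the bookkeeping of constants. The main thing I would verify carefully is precisely that all three entropy estimators can be placed on a single high-probability event, which is ensured because the graphon estimation guarantee holds under the same realisation for $(d)=(1),(2),(12)$.
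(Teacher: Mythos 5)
Your proposal is correct and follows essentially the same route as the paper, whose proof simply combines Theorem~\ref{thm-conv-rate} for the joint-entropy term with the univariate graphon entropy rate from~\cite{skeja2023entropy} for the marginal terms. Your added bookkeeping (the triangle-inequality decomposition, the union bound over the three estimation events, and the observation that $\mathcal{H}$ is invariant under the measure-preserving bijections appearing in Corollary~\ref{cor-joint-graphon}) just makes explicit what the paper leaves implicit.
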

\begin{proof}
    Combining Theorem~\ref{thm-conv-rate} given above and Corollary 2 from~\cite{skeja2023entropy} gives the desired result. 
\end{proof}
Corollary~\ref{cor:mi-conv-rate} follows directly from Theorem~\ref{thm-conv-rate}, as the convergence rate of the joint graphon entropy is the leading term. It demonstrates that the joint graphon entropy, and thus the graphon mutual information estimator, converge to the true values as the number of nodes $n$ increases. This convergence occurs with a probability of at least $1-\exp(-C'n)$, where $C'$ is as defined previously.

\subsection{Estimation of trivariate graphon information--theoretic measures}
In this subsection, we form estimators for the graphon total correlation, graphon interaction information, graphon dual total correlation, and their conditional versions across three exchangeable random graphs: $G_1(n, W^{(1)})$, $G_2(n, W^{(2)})$, and $G_3(n, W^{(3)})$. These estimators involve a combination of graphon entropy estimators and joint graphon entropy estimators for pairs and triplets of graphs. Consequently, all of them share a similar estimation method and converge at the same rate. We have previously defined the joint graphon entropy estimator for two graphs in the preceding subsection, and the graphon entropy estimator has been detailed in our prior work~\cite{skeja2023entropy}. We now proceed to define the joint graphon entropy estimator for three graphs as follows:

\begin{equation}\label{joint-trivariate-entropy}
    \widehat{\mathcal{H}}(\mathbf{W}_{1,2,3})=\mathcal{H}(\widehat{\mathbf{W}}_{1,2,3})
\end{equation} where \begin{align}\label{eq-joint-entropy-estimato}
  \nonumber  \widehat{\mathbf{W}}_{1,2,3} =\Big[&\widehat{W}^{(123)} \quad (\widehat{W}^{(12)}-\widehat{W}^{(123)}) \quad (\widehat{W}^{(13)}-\widehat{W}^{(123)}) \quad (\widehat{W}^{(23)}-\widehat{W}^{(123)}) \\
  \nonumber  &(\widehat{W}^{(1)}-\widehat{W}^{(12)}-\widehat{W}^{(13)}+\widehat{W}^{(123)})\quad  (\widehat{W}^{(2)}-\widehat{W}^{(12)}-\widehat{W}^{(23)}+\widehat{W}^{(123)}) \\
   &(\widehat{W}^{(3)}-\widehat{W}^{(13)}-\widehat{W}^{(23)}+\widehat{W}^{(123)}) \\
 \nonumber  & (1-\widehat{W}^{(1)}-\widehat{W}^{(2)}+\widehat{W}^{(3)}+\widehat{W}^{(12)}+\widehat{W}^{(13)}+\widehat{W}^{(23)}-\widehat{W}^{(123)})\Big].
\end{align}

Following Section~\ref{subsec-estimation-joint-graphon}, we analogously define estimators for $W^{(1)}, W^{(2)}, W^{(3)}, W^{(12)}, W^{(13)}, W^{(23)}$, and $W^{(123)}$ given adjacency matrices $A^{(1)}, A^{(2)}$ and $A^{(3)}$ of size $n\times n$, latent vector $\xi$ and a single common bandwidth $h\in \mathbb{Z}^+$, using a correlated three-layer stochastic block model. The estimators of the first six graphons, namely, the three marginal graphons $W^{(1)}, W^{(2)}, W^{(3)}$ and the three pairwise combinations $W^{(12)}, W^{(13)}, W^{(23)}$, follow directly from Section~\ref{subsec-estimation-joint-graphon}. We define the estimator of the last graphon $W^{(123)}$ as follows:
\begin{equation*}
    \widehat{W}^{(123)}(x,y;h):=\Bar{A}^{(123)}_{\min(\lceil nx/h\rceil,k)\min(\lceil ny/h\rceil,k)}, \ 0<x,y<1,
\end{equation*}
where $$\Bar{A}^{(123)}_{ab}=\frac{A^{(123)}_{ij}\mathbbm{1}(\hat{z}_i=a)\mathbbm{1}(\hat{z}_j=b)}{\mathbbm{1}(\hat{z}_i=a)\mathbbm{1}(\hat{z}_j=b)},$$
for $1\leq a,b \leq k$, where $k=\lceil n^{1/[(\alpha \wedge 1)+1]}\rceil$ and
$A^{(123)}_{ij}=A^{(1)}_{ij} A^{(2)}_{ij}  A^{(3)}_{ij}$, is also an adjacency matrix. Corollary~\ref{cor-joint-graphon} applies to the estimation of trivariate joint graphon as well, where in this case now $(d)$ assumes values $(1)$, $(2)$, $(3)$, $(12)$, $(13)$, $(23)$, $(123)$. Consequently, a corollary of Theorem~\ref{thm-conv-rate} is the following:
\begin{cor}\label{cor:trivariate-rate} Let $\mathbf{W}_{1,2,3}$ denote the system of graphons as defined in~\eqref{eq-system-graphon-trivariate}, and let each $W^{(i)}, W^{(ij)},$ and $W^{(ijk)}$ for $1\leq i,j,k \leq 3$ be functions in $\mathcal{F}^{\alpha}(M)$, where each function is $\alpha$-Hölder continuous. Then, the following inequality holds with probability at least $1-\exp(-C'n)$:
    \begin{align*}
        &|\widehat{\mathcal{H}}(\mathbf{W}_{1,2,3})-\mathcal{H}(\mathbf{W}_{1,2,3})|\leq  C\left(n^{-2\alpha/(\alpha+1)}+\frac{\log n}{n}+\frac{1}{n^{\alpha}}\right)^{1/2},
    \end{align*}
  where $C'>0$ only depends on $M$ and the universal constant $C>0$. 
\end{cor}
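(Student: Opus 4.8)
The plan is to follow the proof of Theorem~\ref{thm-conv-rate} almost verbatim, the only structural change being that the trivariate joint graphon entropy is a sum of eight terms rather than four. Writing $g(t)=t\log t$ on $[0,1]$ (with $g(0):=0$), the joint entropy reads $\mathcal{H}(\mathbf{W}_{1,2,3})=-\iint_{[0,1]^2}\sum_{i,j,k\in\{0,1\}} g(p_{ijk})\,dx\,dy$, where each $p_{ijk}$ is the fixed affine combination of the seven component graphons $W^{(1)},W^{(2)},W^{(3)},W^{(12)},W^{(13)},W^{(23)},W^{(123)}$ displayed in~\eqref{eq-system-graphon-trivariate}, and $\widehat p_{ijk}$ is the same affine combination of the corresponding estimators.

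First I would invoke Corollary~\ref{cor-joint-graphon}, which the surrounding text has already extended to all seven superscripts $(d)\in\{(1),(2),(3),(12),(13),(23),(123)\}$ once one checks that $A^{(123)}_{ij}=A^{(1)}_{ij}A^{(2)}_{ij}A^{(3)}_{ij}$ is a valid symmetric $0/1$ adjacency matrix. This yields, for each such $(d)$ and on an event of probability at least $1-\exp(-C'n)$, the bound $\inf_{\tau}\iint_{(0,1)^2}|W^{(d)}(\tau(x),\tau(y))-\widehat W^{(d)}(x,y)|^2\,dx\,dy\le \varepsilon_n$ with $\varepsilon_n=C(n^{-2\alpha/(\alpha+1)}+\log n/n+n^{-\alpha})$. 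Fixing a common measure-preserving alignment $\tau$ and intersecting the seven events — a union bound that only replaces $C'$ by a slightly smaller constant — I then bound each $\|\widehat p_{ijk}-p_{ijk}\|_2$ by the triangle inequality through its boundedly many constituent graphons, so that every $\widehat p_{ijk}$ inherits the same $L^2$ rate $\varepsilon_n$ up to a fixed multiplicative factor.

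Next I would pass from the probability vector to the entropy termwise, using $|\mathcal{H}(\widehat{\mathbf{W}}_{1,2,3})-\mathcal{H}(\mathbf{W}_{1,2,3})|\le \sum_{i,j,k}\iint_{[0,1]^2}|g(\widehat p_{ijk})-g(p_{ijk})|\,dx\,dy$, and then apply to each summand exactly the continuity estimate for $g$ employed in Appendix~I.B, which converts the $L^2$ control of $\widehat p_{ijk}-p_{ijk}$ into $L^1$ control of $g(\widehat p_{ijk})-g(p_{ijk})$ and is responsible for the square-root appearing in the rate. Since there are only eight summands and seven graphons, their number is absorbed into the universal constant $C$, leaving the rate $\varepsilon_n^{1/2}$ unchanged.

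The genuinely analytic obstacle — that $g(t)=t\log t$ (equivalently the binary entropy $h$) fails to be Lipschitz at the endpoints, so the step from $\|\widehat p_{ijk}-p_{ijk}\|_2$ to $\iint|g(\widehat p_{ijk})-g(p_{ijk})|$ is delicate — is precisely the one already dispatched in the proof of Theorem~\ref{thm-conv-rate}, and it recurs here unchanged, merely multiplied across eight similar terms. The only genuinely new bookkeeping is the validity of $A^{(123)}$ as an adjacency matrix, so that Corollary~\ref{cor-joint-graphon} applies to $\widehat W^{(123)}$, together with the observation that each affine combination $p_{ijk}$ is a genuine probability lying in $[0,1]$, which guarantees the continuity estimate is applied on the correct domain.
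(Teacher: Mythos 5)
Your proposal is correct and matches the paper's approach: the paper itself only remarks that Corollary~\ref{cor:trivariate-rate} ``follows analogously from the proof of Theorem~\ref{thm-conv-rate},'' and your elaboration — extending Corollary~\ref{cor-joint-graphon} to all seven components (noting $A^{(123)}$ is a valid adjacency matrix), bounding the eight entropy terms one at a time with the same continuity estimate for $t\log t$, and absorbing the fixed number of terms into $C$ — is exactly that analogous argument. Your additional care about a common alignment $\tau$ (guaranteed here by the single shared community vector $\hat z$) and the union bound over the seven high-probability events is a sensible, if unstated in the paper, piece of bookkeeping.
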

The proof of Corollary~\ref{cor:trivariate-rate} follows analogously from the proof of Theorem~\ref{thm-conv-rate}. Given that both the bivariate joint graphon entropy estimator and the trivariate joint graphon entropy estimator satisfy the same convergence rate, the convergence rates of all other multivariate graphon information--theoretic measures are the same. This estimation method can similarly be extended to $d-$variate joint graphon estimation and, consequently, to the estimation of $d-$variate information-theoretic measures. However, the number of parameters required to describe a multivariate set of graphs without any imposed dependence assumptions increases geometrically with the number of graphs. Consequently, while estimation remains feasible, it comes at the cost of a geometrically increasing number of parameters.

Drawing analogies with nonparametric mutual information estimation via density estimation leads to an important consideration: Is it necessary for the bandwidth to differ from that used in graphon estimation? This issue has been addressed in ~\cite{skeja2023entropy} on graphon entropy estimation, which shows that a smaller bandwidth $h$ helps reduce bias. However, unlike in kernel density estimation where bandwidth can be incrementally reduced ~\cite{paninski2008undersmoothed}, as shown in~\cite{skeja2023entropy} such an approach is not feasible in graphon estimation. This is because the fundamental step of clustering in estimating graphons with block models cannot accommodate arbitrarily many clusters or excessively small bandwidths, as observed in ~\cite{choi2012stochastic}. It has been shown by~\cite{skeja2023entropy} that for graphon entropy estimation, the automatic bandwidth selection can be the same as that of graphon estimation proposed by~\cite{Olhede_2014}. Therefore, the automatic bandwidth selection for joint graphon estimation, chosen to be the mean bandwidth of the individual adjacency matrices, performs well. We elaborate on the performance of our estimators in the following section.
\section{Simulation Study}\label{sec-simulation}
\subsection{Performance of the estimators of the multivariate graphon information--theoretic measures}
In this section, we first assess the performance of the graphon mutual information estimator on synthetic graphs by evaluating the root mean squared error (RMSE) between the true graphon mutual information and the estimated graphon mutual information. We observe a notable trend: as the number of nodes in the graph increases, the discrepancy between the estimated graphon mutual information and its true value progressively diminishes. This trend is consistent with theoretical expectations, underscoring the estimator's improved accuracy as graph sizes increase. Furthermore, we discuss the implications of graphon mutual information density matrix in Appendix III.A. To begin, we generate adjacency matrices using various graph limit functions. For estimation purposes, we employ a two-layered correlated stochastic block model, extending the univariate scenario introduced in~\cite{Olhede_2014}. Details of the implementation are provided in Section~\ref{subsec-estimation-joint-graphon} and Appendix II.

 We consider generating $A^{(2)}_{ij}$ from $A^{(1)}_{ij}$ as described by~\cite{chandna2022edge}:
\begin{equation*}
A^{(2)}_{ij}|\xi,A^{(1)}_{ij} \sim \mathrm{Bernoulli}(h(\xi_i,\xi_j)A_{ij}^{(1)}), \quad 1\leq j<i\leq n,
    \end{equation*}
   where we choose $h(x,y)= \frac{x+y}{2y}$ for $x,y \in (0,1)$. There exists an input-output relationship, ensuring that the second adjacency matrix is a subset of the first adjacency matrix. Therefore, this method of generation is particularly useful because the joint graphon $W^{(12)}$ will be equal to $W^{(2)}$, thus the true mutual information can be easily calculated.  We examine two scenarios to evaluate the performance of the graphon mutual information estimator. In the first scenario, we set $W^{(1)}(x,y)=xy$ and $W^{(2)}=x(x+y)/2$. Conversely, in the second scenario, we consider $W^{(1)}=xy$ and $W^{(2)}=x^2y^2$, with $h(x,y)=xy$. 
   
   Figure 1 illustrates the diminishing behavior of the RMSE for increasing $n$ on a log-log scale for the estimated graphon mutual information and the true mutual information of the two graphon combinations described above. The RMSE for the first scenario is depicted by blue circles in Figure~\ref{fig:RMSEplot1}, while the RMSE for the second scenario is represented by red crosses in the same figure.
\begin{figure}[!htb]
    \centering
    \begin{minipage}{0.49\textwidth}
        \centering \includegraphics[width=0.9\linewidth]{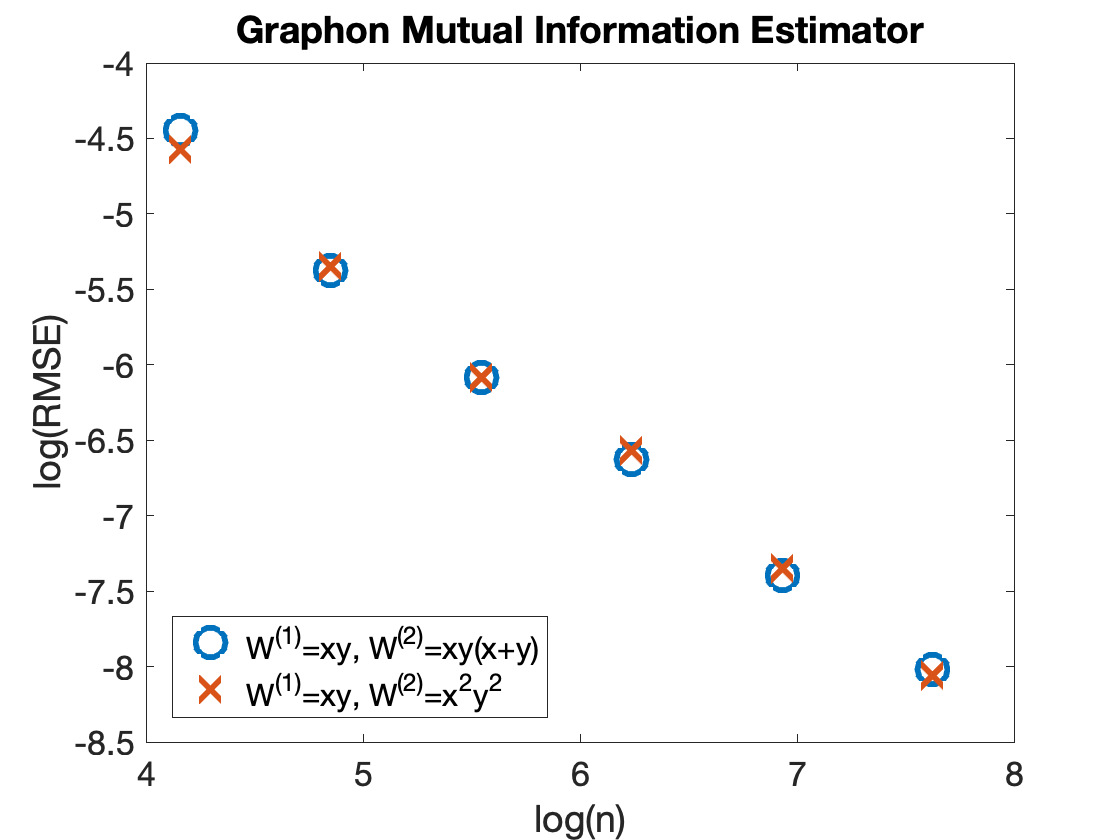}
        \caption{Log-log plots of the decay of $\mathrm{RMSE}(\widehat{\mathrm{I}}_{\mathbf{W}_{1,2}}(W_1;W_2),\mathrm{I}_{\mathbf{W}_{1,2}}(W_1;W_2))$ as a function of $n \in \{2^6,2^7,\cdots,2^{11}\}$, averaged over $300$ trials.}
        \label{fig:RMSEplot1}
    \end{minipage}\hfill
    \begin{minipage}{0.49\textwidth}
        \centering
\includegraphics[width=0.9\linewidth]{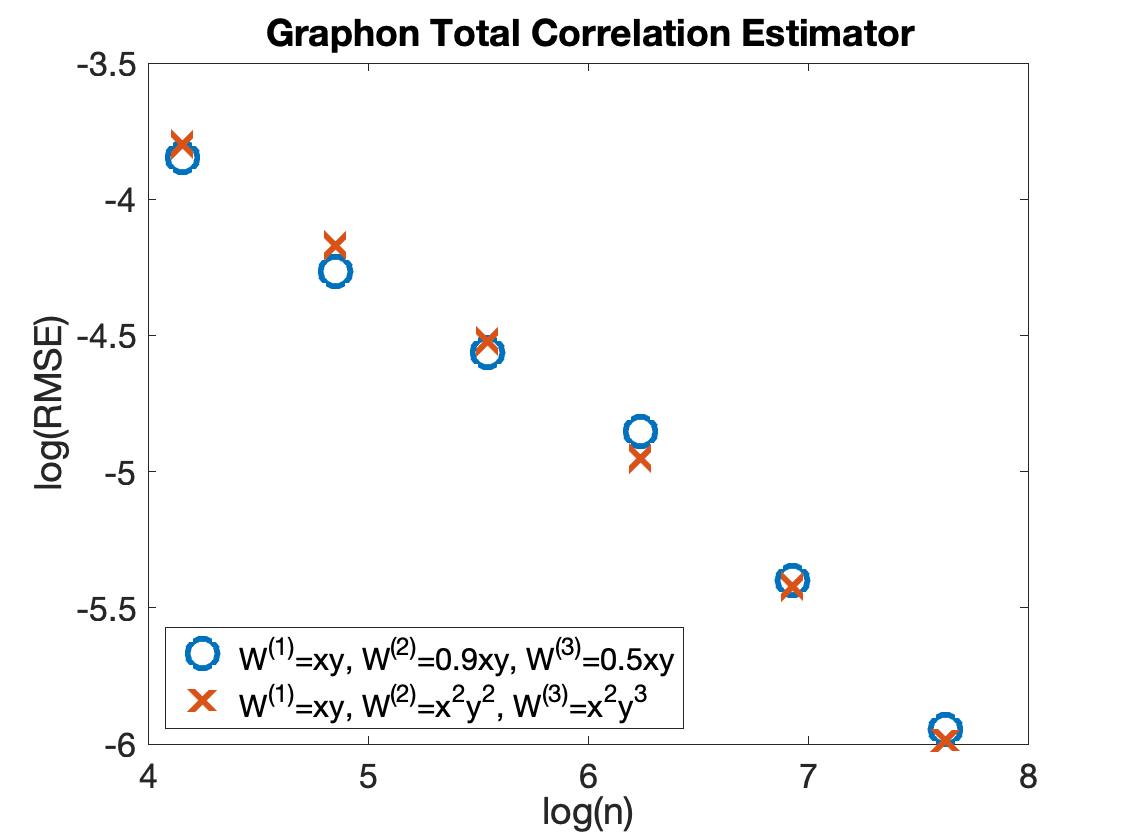} 
        \caption{Log-log plots of the decay of $\mathrm{RMSE}(\widehat{\mathrm{TC}}_{\mathbf{W}_{1,2,3}}(W^{(1)},W^{(2)},W^{(3)}),$ $\mathrm{TC}_{\mathbf{W}_{1,2,3}}(W^{(1)},W^{(2)},W^{(3)})$ as a function of $n \in \{2^6,2^7,\cdots,2^{11}\}$, averaged over $300$ trials.}
        \label{fig:RMSEplot2}
    \end{minipage}
\end{figure}

\begin{figure}[!htb]
    \centering
    \begin{minipage}{0.49\textwidth}
        \centering \includegraphics[width=0.9\linewidth]{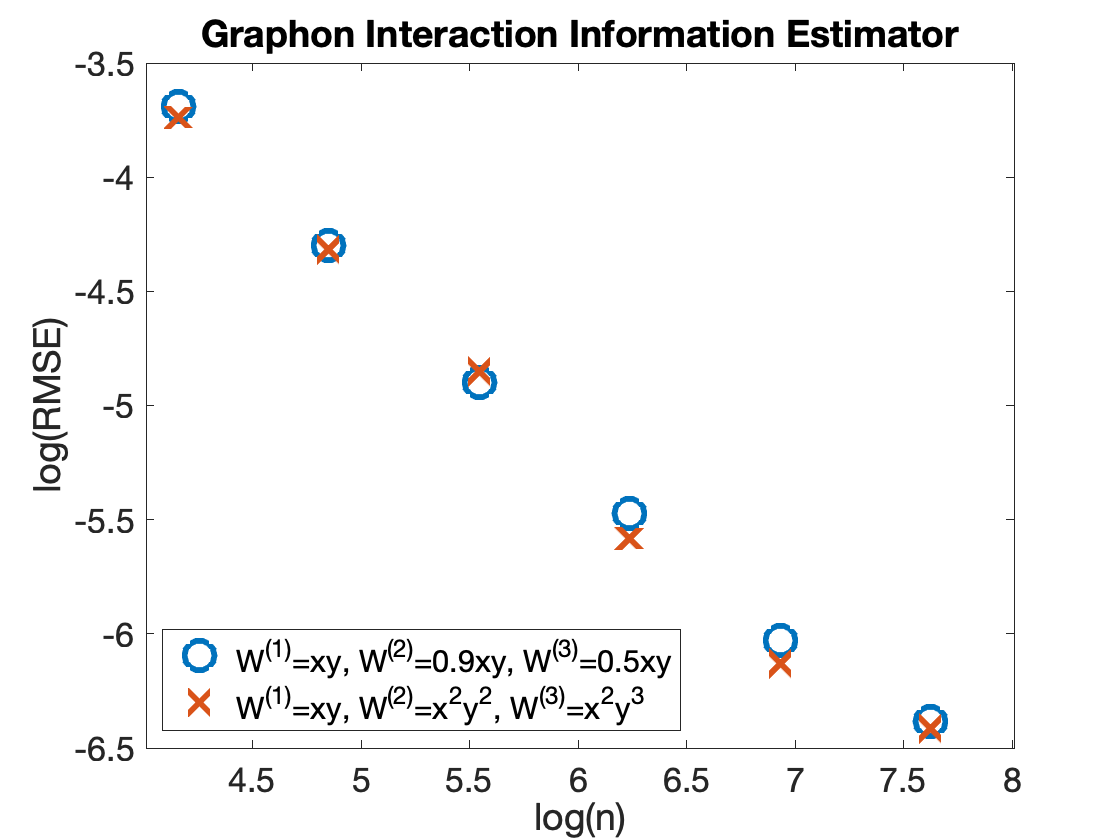}
        \caption{Log-log plots of the decay of $\mathrm{RMSE}(\widehat{\mathrm{I}}_{\mathbf{W}_{1,2,3}}(W^{(1)};W^{(2)};W^{(3)})$, $\mathrm{I}_{\mathbf{W}_{1,2,3}}(W^{(1)};W^{(2)};W^{(3)}))$ as a function of $n \in \{2^6,2^7,\cdots,2^{11}\}$, averaged over $300$ trials.}
        \label{fig:trivariate-mi}
    \end{minipage}\hfill
    \begin{minipage}{0.49\textwidth}
        \centering
\includegraphics[width=0.9\linewidth]{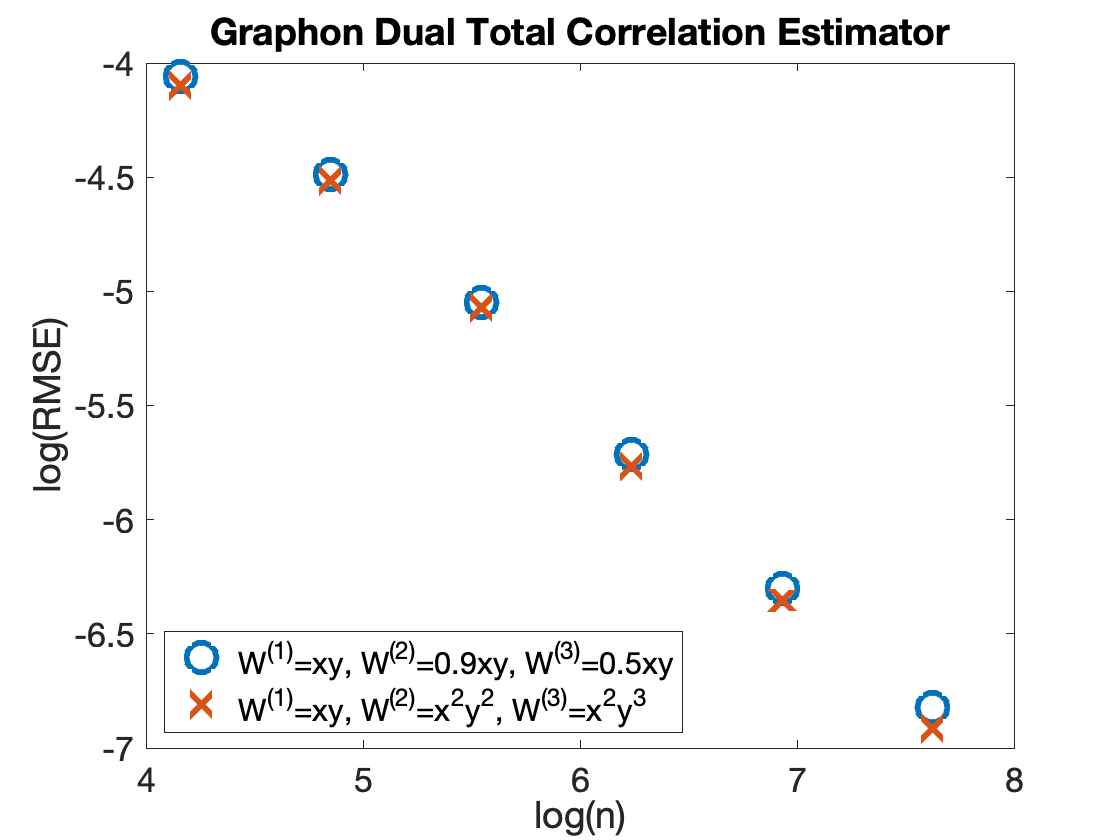} 
         \caption{Log-log plots of the decay of $\mathrm{RMSE}(\widehat{\mathrm{DTC}} _{\mathbf{W}_{1,2,3}}(W^{(1)},W^{(2)},W^{(3)}),$ $\mathrm{DTC}_{\mathbf{W}_{1,2,3}}(W^{(1)},W^{(2)},W^{(3)})$ as a function of $n \in \{2^6,2^7,\cdots,2^{11}\}$, averaged over $300$ trials.}
        \label{fig:conditional-trivariate-mi}
    \end{minipage}
\end{figure}
Building on the bivariate case, we explore the root mean squared error of the graphon total correlation, graphon interaction information, and graphon dual total correlation estimators for three graphs generated by the following two cases. The generation method is akin to the bivariate case, wherein we generate graphs that are subsets of each other in descending order. This approach ensures that $W^{(12)}=W^{(2)}$, $W^{(13)}=W^{(3)}=W^{(23)}=W^{(123)}$. In the first case, we utilize the following three graphons: $W^{(1)}=xy$, $W^{(2)}=0.8xy$, and $W^{(3)}=0.5xy$. For the second case, we use $W^{(1)}=xy$, $W^{(2)}=x^2y^2$, and $W^{(3)}=x^2y^3$. We estimate the graphon total correlation among the the three exchangeable random graphs $G_1(n,W^{(1)})$, $G_2(n,W^{(2)})$, and $G_3(n,W^{(3)})$ for both cases, using Equations \ref{joint-trivariate-entropy} and \ref{eq-joint-entropy-estimator}, as well as the univariate graphon entropy estimation method described in~\cite{skeja2023entropy}. Figure~\ref{fig:RMSEplot2} illustrates the diminishing root mean squared error with increasing $n$, consistent with our theoretical expectations. The RMSE for the first case is denoted by blue circles, while for the second case, it is represented by red crosses. 

Similarly to the estimation of graphon total correlation, estimating the graphon interaction information and the graphon dual total correlation requires estimating individual and joint graphon entropies among graph generating mechanisms. One would expect similar behavior from these estimators. However, for clarity, we illustrate the performance of the graphon interaction information estimator for two scenarios with different graph limits in Figure~\ref{fig:trivariate-mi}, showing the diminishing behavior of the root mean squared error over increasing $n$ across 300 averaged trials. Likewise, Figure~\ref{fig:conditional-trivariate-mi} illustrates the diminishing root mean squared error of the graphon dual total correlation estimator for both scenarios as $n$ increases over 300 averaged trials. Consequently, as expected, the performance of all estimators follows a similar behavior.
\subsection{Multivariate graphon information--theoretic measures and multiplex graph structure}
In the following subsections, we explore the concepts of synergy and redundancy in graphs, examining their relationships with the introduced graphon multivariate information--theoretic measures such as graphon interaction information and graphon total correlation. As discussed in Section~\ref{sec-info-measure-three}, redundancy is typically associated with positive interaction information, whereas synergy is associated with negative values. We adapt this interpretation and visually demonstrate examples of redundancy and synergy in graphs through Figures~\ref{fig:tc-sbm-percolated}--\ref{fig:case2-synergy}, respectively.

\subsubsection{Redundancy and multiplex graph structure} \label{subsubsec:redundancy and multiplex graph structure}
Figure~\ref{fig:tc-sbm-percolated} presents three graphs exhibiting a shared community structure. The second and third graphs are percolated versions of the first graph, retaining $95\%$ and $90\%$ of the edges, respectively. The normalized estimated graphon total correlation is given as $0.869$. The estimated interaction information, given as $0.285$, is very close to its upper bound, which is $0.289$. This suggests high redundancy. These values are intuitive, considering that the second and third graphs are derived from the first by removing some edges. Consistent with the method of graph generation, the estimated graphon total correlation and the estimated graphon interaction information suggest a predominant information overlap. Figure~\ref{fig:tc-sbm-percolated} further illustrates this by coloring the shared edges among the three graphs in orange, and the unshared ones in blue. 

In contrast, Figure~\ref{fig:tc-sbm-different-thetas} illustrates three graphs generated with varying community structures. In this case, the shared edges are significantly less apparent compared to Figure~\ref{fig:tc-sbm-percolated}, and as expected, the majority of the edges are not common among the three graphs. Nevertheless, there is still a portion of shared edges, indicating some redundancy. The normalized estimated graphon total correlation among these three graphs is $0.370$. The estimated graphon interaction information is $0.097$, with the lower and upper bounds given as $-0.0072$ and $0.2332$, once more indicating the presence of some redundancy within the system of the three graphs.
\begin{figure}[ht!]
    \centering
    \begin{minipage}{0.48\linewidth}
        \centering
        \includegraphics[width=\linewidth]{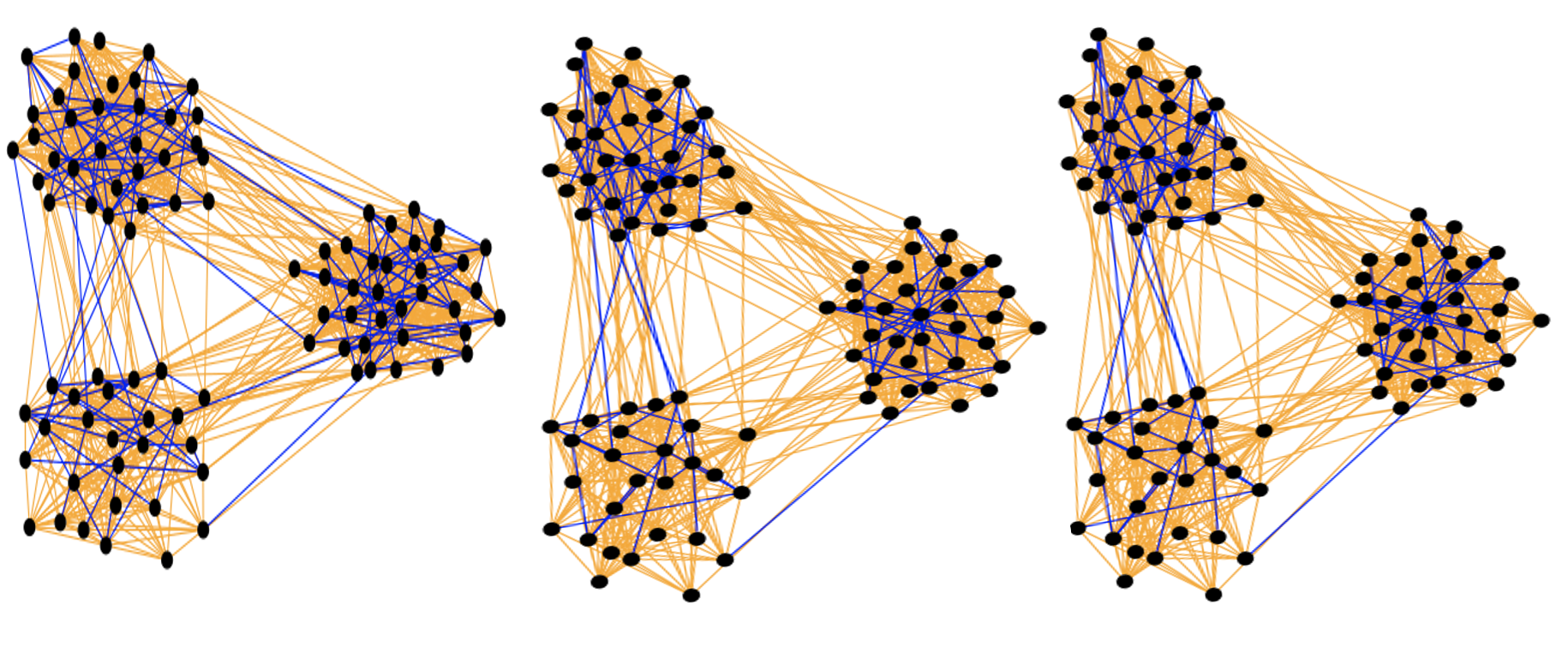}
        \caption{A parent graph (left) alongside two percolated versions (middle and right) with $5\%$ and $10\%$ of edges removed, respectively. Edges colored orange represent those shared among the graphs, while blue unshared.}
        \label{fig:tc-sbm-percolated}
    \end{minipage}
    \hfill 
    \begin{minipage}{0.48\linewidth}
        \centering
        \includegraphics[width=\linewidth]{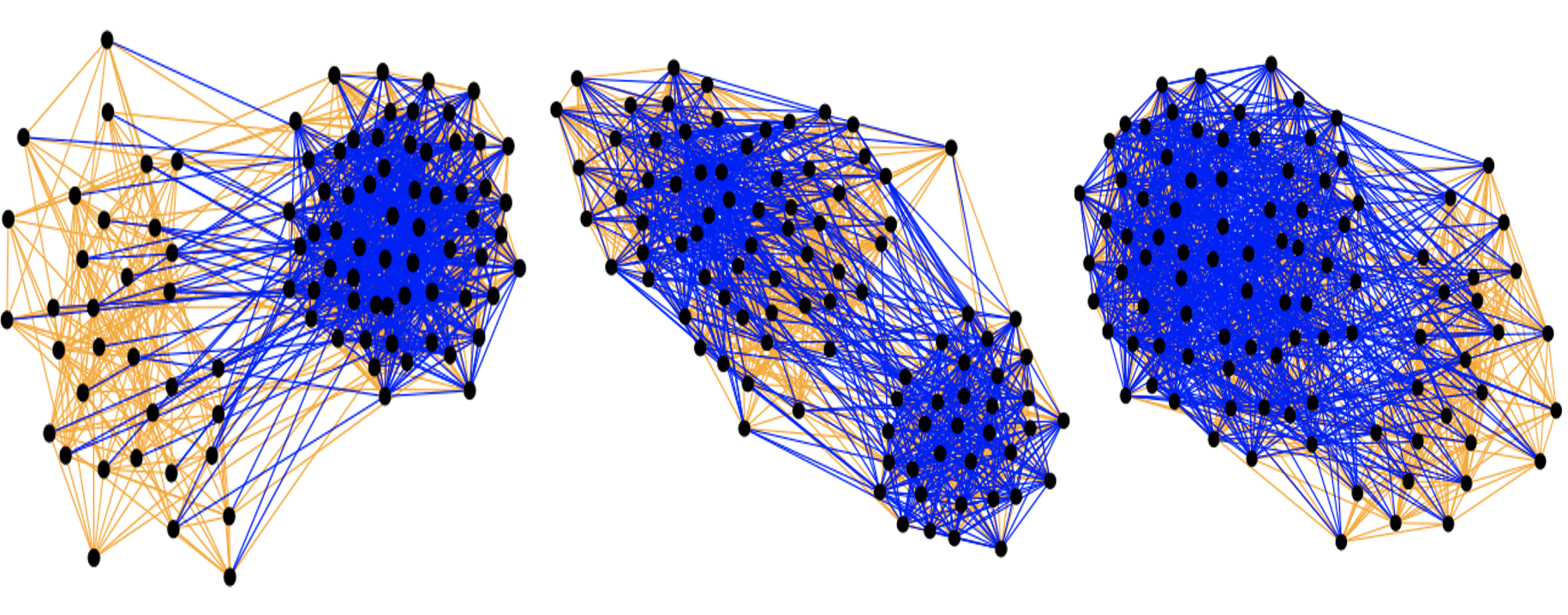}
        \caption{Three graphs generated with varying block structures.  Edges colored orange represent those shared among the graphs, while blue unshared.}
        \label{fig:tc-sbm-different-thetas}
    \end{minipage}
\end{figure}
\subsubsection{Synergy and multiplex graph structure}
Figure~\ref{fig:case1-synergy} illustrates three graphs, two of which are generated with different community structures, while the third is obtained via the pairwise \texttt{xor} operation on the adjacency matrices of the first two graphs, i.e., $A^{(3)}_{ij} = (A^{(1)}_{ij} + A^{(2)}_{ij}) \mod 2$, partly inspired by examples in~\cite{rosas2019quantifying} concerning random variables. The edges of the third graph are colored in green, and edges shared between the first two graphs and the third are also colored in green, while the rest is colored in purple. As observed from Figure~\ref{fig:case1-synergy}, the first two graphs provide little to no information about the third graph. This is further validated by estimating and then normalizing the graphon mutual information between the first and the third, and the second and the third graphs, following Equation~\ref{normalized-mi}, yielding $\widehat{\mathrm{I}}^{\mathrm{N}}_{\mathbf{W}_{1,3}}(W^{1};W^{3})=0.117$ and $\widehat{\mathrm{I}}^{\mathrm{N}}_{\mathbf{W}_{2,3}}(W^{2};W^{3})=0.156$, respectively. However, their \texttt{xor} combination precisely defines the third graph, hence the interaction information among these three graphs is given as $-0.17$. The lower and upper bounds of this interaction information, following Corollary~\ref{cor:interaction-information-bounds}, are given to be $-0.203$ and $0.07$, respectively. This results in a \emph{synergistic} behavior, as lower-order interactions (marginal and pairwise) fail to capture the higher-order interactions within the system of these three graphs.

\begin{figure}[ht!]
    \centering
    \begin{minipage}{0.48\linewidth}
        \centering
        \includegraphics[width=\linewidth]{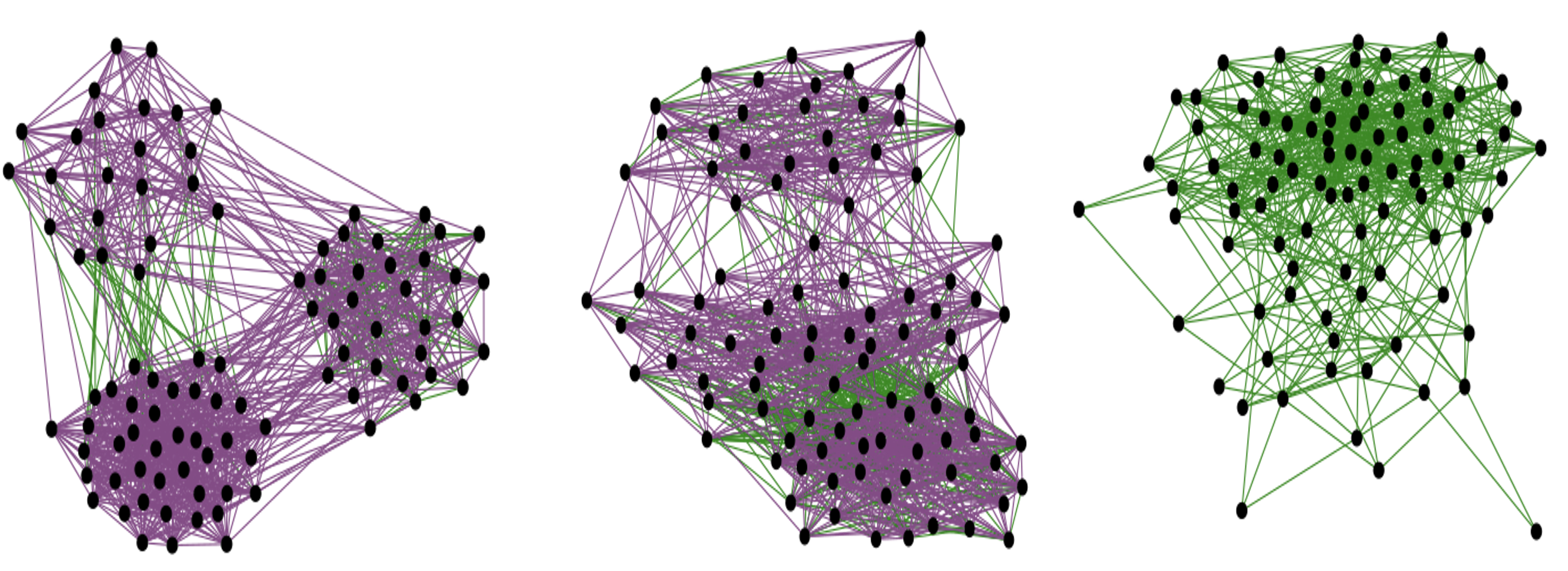}
        \caption{Two graphs generated with varying community structures (left and middle), where the third (right) graph is derived by the \texttt{xor} operation on the first two. Edges of the third graph and any edges shared with the first two graphs are colored in green, while edges present only in the first two graphs and not shared with the third are colored in purple.}
        \label{fig:case1-synergy}
    \end{minipage}
    \hfill 
    \begin{minipage}{0.48\linewidth}
        \centering
        \includegraphics[width=\linewidth]{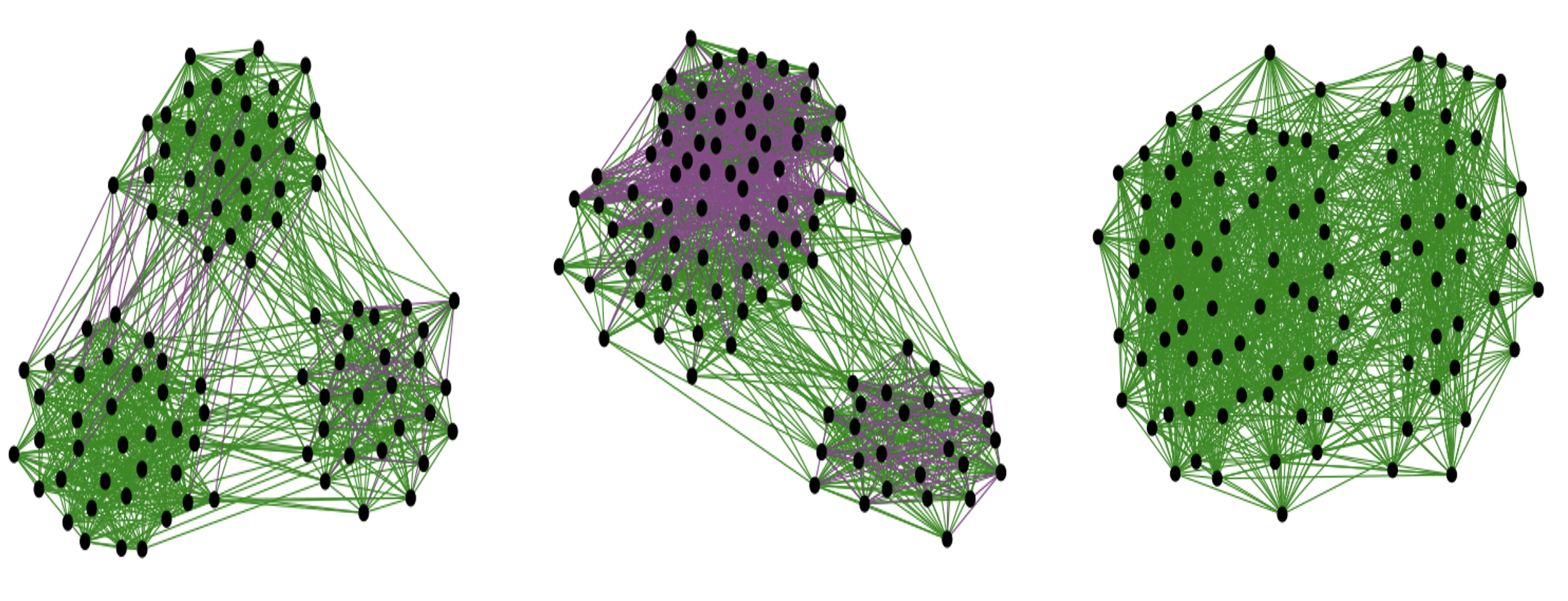}
        \caption{Three graphs generated with varying block structures. Edges of the third graph (right) and any edges shared with the first two graphs (left and middle) are colored in green, while edges present only in the first two graphs and not shared with the third are colored in purple. }
        \label{fig:case2-synergy}
    \end{minipage}
\end{figure}
Figure~\ref{fig:case2-synergy} illustrates three graphs with slightly varying community structures, where none of the graphs is a combination of the others. Similar to Figure~\ref{fig:case1-synergy}, the edges of the third graph are colored in green, and edges shared between the first two graphs and the third are also colored in green, while the rest is colored in purple. Given the shared community structure with slight variations, unlike the scenario in Figure~\ref{fig:case1-synergy}, we do not observe synergy here. As discussed in the previous subsection, this scenario exhibits redundancy within the overall system. The estimated graphon interaction information is given as $0.114$, and it is close to the upper bound, where the bounds set by Corollary~\ref{cor:interaction-information-bounds}, are $-0.010$ and $0.124$, respectively. The order of graphs in Figures~\ref{fig:case1-synergy} and~\ref{fig:case2-synergy} does not signify any particular meaning; these illustrations differentiate between synergy and redundancy, highlighting cases where pairwise shared information does not capture higher-order dependencies within a system of graphs.

\section{Real data}\label{sec-real-data}
Beyond simulations for synthetic graphs, we are also interested in understanding the insights provided by graphon mutual information, total correlation, conditional total correlation, and interaction information when applied to real-world networks. To achieve this, we analyze a temporal network that depicts interactions among the same high school students over five consecutive days, with data collected in 20-second intervals. This makes the temporal network also multiplex. The dataset is publicly available at \footnote{\url{https://networks.skewed.de/net/sp_colocation}}. We aggregate the data into hourly snapshots, resulting in a total of fifty hours. The data collection and description are detailed in~\cite{mastrandrea2015contact}.
\subsection{Graphon mutual information}
First, we construct a normalized graphon mutual information matrix of dimension $50 \times 50$ following Definition~\ref{def-mim}. This involves estimating the graphon mutual information between adjacency matrices that represent hourly interactions, and subsequently normalizing these values in accordance with Equation~\ref{normalized-mi}. A consistent pattern is observed between two specific hours of the day throughout the week, as detailed in Figure~\ref{real-data}. This figure displays selected entries from the normalized mutual information matrix, highlighting the lunch hours over four days, with the graphon mutual information values between lunch hours color-coded for clarity.

The authors of~\cite{mastrandrea2015contact} observe a variable pattern of interaction between students during class hours and a strong pattern of interaction during the two-hour lunch break. They also note that the daily activity of students exhibits a repetitive pattern throughout the week. Despite daily variations, they highlight that the temporal oscillations and mixing patterns between classes remain consistent across different days. In alignment with these observations, Figure~\ref{real-data} displays not only the color-coded normalized graphon mutual information estimates between lunch hours but also includes several rows demonstrating the consistency of the graphon mutual information behavior throughout the days. This effectively shows that the graphon mutual information estimator accurately captures key properties of the graph such as connectedness, symmetry, and dynamics, confirming the findings of~\cite{mastrandrea2015contact}.
\begin{figure}[!htb]
    \centering
\includegraphics[width=0.9
\textwidth]{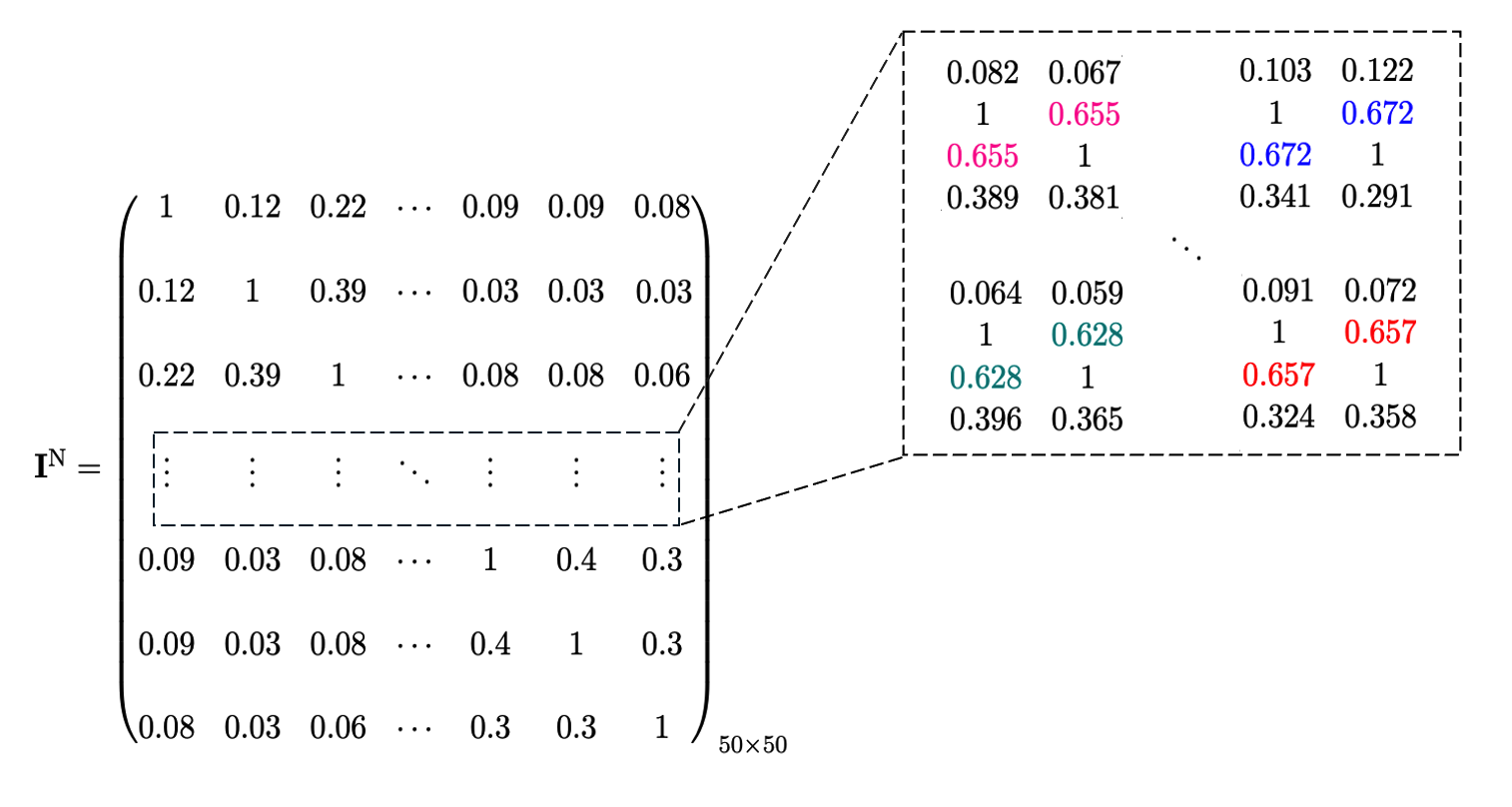}
    \caption{Snapshot from the normalized estimated graphon mutual information matrix between hourly student interactions depicting lunch hours with colored numbers.}
    \label{real-data}
\end{figure}
\subsection{Graphon total correlation and graphon conditional total correlation}
As discussed in Section~\ref{sec-info-measure-three}, there is often interest in quantifying higher--order interdependencies among more than two random variables. In this context, we estimate the shared information among three observed adjacency matrices that share the same node set. We also aim to determine which graph plays the most significant role in the trivariate setting by assessing various conditional dependency scenarios. This analysis enables us to understand how each graph affects the relationships among the other two.

To better understand the interdependence structure of the multiplex temporal graph described in~\cite{mastrandrea2015contact}, we begin by examining the normalized graphon mutual information matrix. We then select the three consecutive hours with the highest graphon mutual information, as we expect these to exhibit high shared information.  Furthermore, we include some randomly chosen hours to illustrate the differences in the outputs of the estimated graphon total correlation, graphon interaction information, and conditional graphon total correlation between meaningful scenarios and randomly selected times. Adjacency matrices $A^{(11)}, A^{(12)}, A^{(13)}$ and $A^{(21)}, A^{(22)}, A^{(23)}$, denote the adjacency matrices of three consecutive hours of the second and the third day, respectively. We estimate the graphon total correlation among $A^{(11)}, A^{(12)}, A^{(13)}$ and $A^{(21)}, A^{(22)}, A^{(12)}$, and show how we obtain different outputs for the conditional graphon total correlation estimator when we vary the graph on which we condition. The estimated graphon total correlation and conditional graphon total correlation at a selection of time points is given by:

\begin{align}
\nonumber & \widehat{\mathrm{TC}}^{\mathrm{N}}_{\mathbf{W}_{11,12,13}}(W^{(11)},W^{(12)},W^{(13)})=0.565 ,\\
\label{tc-111213}
&\widehat{\mathrm{TC}}^{\mathrm{N}}_{\mathbf{W}_{11,12,13}}(W^{(11)},W^{(13)}|W^{(12)})=0.204, \quad \widehat{\mathrm{TC}}^{\mathrm{N}}_{\mathbf{W}_{11,12,13}}(W^{(11)},W^{(12)}|W^{(13)})=0.620.\\ 
    &\nonumber \widehat{\mathrm{TC}}^{\mathrm{N}}_{\mathbf{W}_{21,22,23}}(W^{(21)},W^{(22)},W^{(23)})=0.555,\\
    &\label{tc-212223}\widehat{\mathrm{TC}}^{\mathrm{N}}_{\mathbf{W}_{21,22,23}}(W^{(21)},W^{(23)}|W^{(22)})=0.209, \quad \widehat{\mathrm{TC}}^{\mathrm{N}}_{\mathbf{W}_{21,22,23}}(W^{(21)},W^{(22)}|W^{(23)})=0.512.
\end{align}
   Equation~\eqref{tc-111213} therefore illustrates three dependence relations. The estimated normalized total correlation among three consecutive hours in a day is relatively high, at $0.565$, as is the interaction information, $\widehat{\mathrm{I}}_{\mathbf{W}_{11,12,13}}(W^{(11)};W^{(12)};W^{(13)})=0.11$, lying in the range $[-0.013,0.119]$ following Corollary~\ref{cor:interaction-information-bounds}. The estimated graphon interaction information is very close to its upper bound, thus suggesting redundancy. We observe that when conditioning on the generating mechanism of $A^{(12)}$, the relationship between the generating mechanisms of $A^{(11)}$ and $A^{(13)}$ is closer to zero, unlike when conditioned on the generating mechanism of $A^{(13)}$. This suggests that $A^{(12)}$ largely accounts for the shared information between $A^{(11)}$ and $A^{(13)}$, a conclusion that is not evident when conditioning on $A^{(13)}$. Additionally, the normalized mutual information between the generating mechanisms of $A^{(11)}$ and $A^{(12)}$ is given as $\widehat{\mathrm{I}}^{\mathrm{N}}_{\mathbf{W}{11,12}}(W^{(11)};W^{(12)})=0.655$, and the conditional total correlation when conditioned on the generating mechanism of $A^{(13)}$ is given as $\widehat{\mathrm{TC}}^{\mathrm{N}}_{\mathbf{W}_{11,12,13}}(W^{(11)},W^{(12)}|W^{(13)})=0.62$. From this, we observe that $A^{(13)}$ has almost no impact in explaining the shared information between $A^{(11)}$ and $A^{(12)}$ since the estimated conditional graphon total correlation is very close to the estimated graphon mutual information.
\vspace{-0.5pt}
We observe a similar phenomenon in Equation~\eqref{tc-212223}, where the graphon total correlation and interaction information among the adjacency matrices of three consecutive hours are again comparably high, with the estimated graphon interaction information equaling $0.109$, closely approaching the upper bound of $0.12$. Additionally, conditioning on the intermediate adjacency matrix substantially explains the shared information between the first and third adjacency matrices. Similarly, when conditioning on the third adjacency matrix, we find that the estimated conditional graphon total correlation, given as $0.512$, is again very close to the estimated graphon mutual information between the generating mechanisms of $A^{(11)}$ and $A^{(12)}$, given as $\widehat{\mathrm{I}}^{\mathrm{N}}_{\mathbf{W}_{11,12}}(W^{(11)};W^{(12)}) = 0.598$. These findings align well with our understanding of time, suggesting that $A^{(t)}$ typically explains the relationship between $A^{(t-1)}$ and $A^{(t+1)}$ for $t>0$. We also investigate a scenario in which we randomly select some adjacency matrices to estimate the graphon total correlation and the conditional graphon total correlation for various dependence combinations, as follows:

   \begin{align}
    \nonumber & \widehat{\mathrm{TC}}^{\mathrm{N}}_{\mathbf{W}_{11,17,28}}(W^{(11)},W^{(17)},W^{(28)})=0.236 \\
&\label{tc-1117}\widehat{\mathrm{TC}}^{\mathrm{N}}_{\mathbf{W}_{11,17,28}}(W^{(11)},W^{(17)}|W^{(28)})=0.292, \quad \widehat{\mathrm{I}}^{\mathrm{N}}_{\mathbf{W}_{11,17}}(W^{(11)};W^{(17)})=0.295 \\
 \label{tc-1128} &\widehat{\mathrm{TC}}^{\mathrm{N}}_{\mathbf{W}_{11,17,28}}(W^{(11)},W^{(28)}|W^{(17)})=0.109, \quad \widehat{\mathrm{I}}^{\mathrm{N}}_{\mathbf{W}_{11,28}}(W^{(11)};W^{(28)})=0.118\\
\label{tc-1728} &\widehat{\mathrm{TC}}^{\mathrm{N}}_{\mathbf{W}_{11,17,28}}(W^{(17)},W^{(28)}|W^{(11)})=0.075, \quad \widehat{\mathrm{I}}^{\mathrm{N}}_{\mathbf{W}_{17,28}}(W^{(17)};W^{(28)})=0.078.
   \end{align}
Equations~\eqref{tc-1117}--\eqref{tc-1728} illustrate various conditional dependency scenarios, demonstrating that when graphs are randomly selected or lack explanatory roles, conditioning on one does not provide more information than the mutual information between the generating mechanisms between the other two graphs.

\section{Conclusion}\label{sec-conclusion}
In conclusion, this paper developed a comprehensive theory of multivariate graphon information--theoretic measures for multipex exchangeable random graphs, or graphs that are subject to graph matching. We introduced joint graphon entropy, and the graphon mutual information between two graphs, while also proposing consistent estimators via consistent estimation of the bivariate graph limit model. We also introduced the graphon mutual information matrix for multiplex graphs, and defined the von Neumann entropy of the graphon mutual information density matrix, which is a normalized version of the graphon mutual information matrix. We formulated the trivariate graph limit model, and introduced the joint graphon entropy among three exchangeable random graphs. Consequently, we introduced the graphon interaction information among three graphs, and discussed the concepts of synergy and redundancy for graphs for the first time. Furthermore, we introduced multivariate graphon information--theoretic measures based on relative entropy for assessing independence and conditional independence among the $d$ graph-generating mechanisms: graphon total correlation, graphon dual total correlation, their conditional versions, and graphon $O$-information. We introduced consistent estimators for these measures in the setting of three graphs and discussed extensions of estimation for $d \geq 3$. We assessed the performance of our estimators in synthetic graphs by demonstrating a decrease in root mean squared error for all proposed estimators as the sample size increased. Additionally, we explored the relationship between multiplex graph structure, synergy, redundancy and the introduced multivariate graphon information--theoretic measures. In real-world networks, we observed the utility of graphon mutual information, graphon interaction information, graphon total correlation, and conditional graphon total correlation in characterising the complexity of time-varying graphs and in depicting time intervals that share the most information. This aligns with previous non-mathematical studies about the dataset~\cite{mastrandrea2015contact}, underscoring our theory's potential in addressing complex multiplex network inference problems.
\section*{Acknowledgements}
The authors thank Charles Dufour for his help with the implementation of the software package.

\newpage
\bibliographystyle{acm}

\begin{thebibliography}{10}

\bibitem{abdallah2012measure}
{\sc Abdallah, S.~A., and Plumbley, M.~D.}
\newblock A measure of statistical complexity based on predictive information with application to finite spin systems.
\newblock {\em Physics Letters A 376}, 4 (2012), 275--281.

\bibitem{aldous1981representations}
{\sc Aldous, D.~J.}
\newblock Representations for partially exchangeable arrays of random variables.
\newblock {\em Journal of Multivariate Analysis 11}, 4 (1981), 581--598.

\bibitem{amini2013pseudo}
{\sc Amini, A.~A., Chen, A., Bickel, P.~J., and Levina, E.}
\newblock Pseudo-likelihood methods for community detection in large sparse networks.
\newblock {\em The Annals of Statistics 41}, 4 (2013), 2097--2122.

\bibitem{anastassiou2007computational}
{\sc Anastassiou, D.}
\newblock Computational analysis of the synergy among multiple interacting genes.
\newblock {\em Molecular Systems Biology 3}, 1 (2007), 83.

\bibitem{athreya2022discovering}
{\sc Athreya, A., Lubberts, Z., Park, Y., and Priebe, C.~E.}
\newblock Discovering underlying dynamics in time series of networks.
\newblock {\em arXiv preprint arXiv:2205.06877\/} (2022).

\bibitem{austin2015exchangeable}
{\sc Austin, T.}
\newblock Exchangeable random measures.
\newblock {\em Annales de l'Institut Henri Poincar{\'e}, Probabilit{\'e}s et Statistiques 51}, 3 (08 2015), 842--861.

\bibitem{austin2018measure}
{\sc Austin, T.}
\newblock Measure concentration and the weak {P}insker property.
\newblock {\em Publications math{\'e}matiques de l'IH{\'E}S 128\/} (2018), 1--119.

\bibitem{austin2018multi}
{\sc Austin, T.}
\newblock Multi-variate correlation and mixtures of product measures.
\newblock {\em arXiv preprint arXiv:1809.10272\/} (2018).

\bibitem{bianconi2018multilayer}
{\sc Bianconi, G.}
\newblock {\em Multilayer networks: structure and function}.
\newblock Oxford University Press, 2018.

\bibitem{lovasz2006limits}
{\sc Lov\'{a}sz, L., and Szegedy, B.}
\newblock Limits of dense graph sequences.
\newblock {\em Journal of Combinatorial Theory, Series B 96}, 6 (2006), 933--957.



\bibitem{bickel2009nonparametric}
{\sc Bickel, P.~J., and Chen, A.}
\newblock A nonparametric view of network models and {N}ewman--{G}irvan and other modularities.
\newblock {\em {P}roceedings of the {N}ational {A}cademy of {S}ciences 106}, 50 (2009), 21068--21073.

\bibitem{brenner2000synergy}
{\sc Brenner, N., Strong, S.~P., Koberle, R., Bialek, W., and Steveninck, R. R. d. R.~v.}
\newblock Synergy in a neural code.
\newblock {\em Neural computation 12}, 7 (2000), 1531--1552.

\bibitem{caelli2004eigenspace}
{\sc Caelli, T., and Kosinov, S.}
\newblock An eigenspace projection clustering method for inexact graph matching.
\newblock {\em IEEE Transactions on Pattern Analysis and Machine Intelligence 26}, 4 (2004), 515--519.

\bibitem{caetano2009learning}
{\sc Caetano, T.~S., McAuley, J.~J., Cheng, L., Le, Q.~V., and Smola, A.~J.}
\newblock Learning graph matching.
\newblock {\em IEEE Transactions on Pattern Analysis and Machine Intelligence 31}, 6 (2009), 1048--1058.

\bibitem{celisse2012consistency}
{\sc Celisse, A., Daudin, J.-J., and Pierre, L.}
\newblock Consistency of maximum-likelihood and variational estimators in the stochastic block model.
\newblock {\em Electronic Journal of Statistics 6\/} (2012), 1847--1899.

\bibitem{chandna2022edge}
{\sc Chandna, S., Janson, S., and Olhede, S.~C.}
\newblock Edge coherence in multiplex networks.
\newblock {\em arXiv preprint arXiv:2202.09326\/} (2022).

\bibitem{chechik2001group}
{\sc Chechik, G., Globerson, A., Anderson, M., Young, E., Nelken, I., and Tishby, N.}
\newblock Group redundancy measures reveal redundancy reduction in the auditory pathway.
\newblock {\em Advances in Neural Information Processing Systems 14\/} (2001).

\bibitem{choi2012stochastic}
{\sc Choi, D.~S., Wolfe, P.~J., and Airoldi, E.~M.}
\newblock Stochastic blockmodels with a growing number of classes.
\newblock {\em Biometrika 99}, 2 (2012), 273--284.

\bibitem{chung1986some}
{\sc Chung, F.~R., Graham, R.~L., Frankl, P., and Shearer, J.~B.}
\newblock Some intersection theorems for ordered sets and graphs.
\newblock {\em Journal of Combinatorial Theory, Series A 43}, 1 (1986), 23--37.

\bibitem{cover1999elements}
{\sc Cover, T.~M.}
\newblock {\em Elements of information theory}.
\newblock John Wiley \& Sons, 1999.

\bibitem{escolano2017mutual}
{\sc Escolano, F., Hancock, E.~R., Lozano, M.~A., and Curado, M.}
\newblock The mutual information between graphs.
\newblock {\em Pattern Recognition Letters 87\/} (2017), 12--19.

\bibitem{felippe2023threshold}
{\sc Felippe, H., Viol, A., De~Araujo, D., Da~Luz, M., Palhano-Fontes, F., Onias, H., Raposo, E., and Viswanathan, G.}
\newblock Threshold-free estimation of entropy from a {P}earson matrix.
\newblock {\em Europhysics Letters 141}, 3 (2023), 31003.

\bibitem{gao2015rate}
{\sc Gao, C., Lu, Y., and Zhou, H.~H.}
\newblock Rate-optimal graphon estimation.
\newblock {\em The Annals of Statistics 43}, 6 (2015), 2624--2652.

\bibitem{Griffith2014}
{\sc Griffith, V., and Koch, C.}
\newblock {\em Quantifying synergistic mutual information}.
\newblock Springer, 2014, pp.~159--190.

\bibitem{han1975linear}
{\sc Han, T.~S.}
\newblock Linear dependence structure of the entropy space.
\newblock {\em Information and Control 29}, 4 (1975), 337--368.

\bibitem{han1978nonnegative}
{\sc Han, T.~S.}
\newblock Nonnegative entropy measures of multivariate symmetric correlations.
\newblock {\em Information and Control 36\/} (1978), 133--156.

\bibitem{hatami2018graph}
{\sc Hatami, H., Janson, S., and Szegedy, B.}
\newblock Graph properties, graph limits, and entropy.
\newblock {\em Journal of Graph Theory 87}, 2 (2018), 208--229.

\bibitem{hoover1979relations}
{\sc Hoover, D.~N.}
\newblock Relations on probability spaces and arrays of random variables.
\newblock {\em Technical report, Institute for Advanced Study\/} (1979).

\bibitem{jakobsen2014mutual}
{\sc Jakobsen, S.~K.}
\newblock Mutual information matrices are not always positive semidefinite.
\newblock {\em IEEE Transactions on Information Theory 60}, 5 (2014), 2694--2696.

\bibitem{james2011anatomy}
{\sc James, R.~G., Ellison, C.~J., and Crutchfield, J.~P.}
\newblock Anatomy of a bit: Information in a time series observation.
\newblock {\em Chaos: An Interdisciplinary Journal of Nonlinear Science 21}, 3 (2011).

\bibitem{janson2013graphons}
{\sc Janson, S.}
\newblock Graphons, cut norm and distance, rearrangements and coupling.
\newblock {\em New York J. Math. Monographs 24\/} (2013), 1--76.

\bibitem{janson2008graph}
{\sc Janson, S., and Diaconis, P.}
\newblock Graph limits and exchangeable random graphs.
\newblock {\em Rendiconti di Matematica e delle sue Applicazioni. Serie VII\/} (2008), 33--61.

\bibitem{jerdee2023normalized}
{\sc Jerdee, M., Kirkley, A., and Newman, M.}
\newblock Normalized mutual information is a biased measure for classification and community detection.
\newblock {\em arXiv preprint arXiv:2307.01282\/} (2023).

\bibitem{jerdee2024mutual}
{\sc Jerdee, M., Kirkley, A., and Newman, M.}
\newblock Mutual information and the encoding of contingency tables.
\newblock {\em arXiv preprint arXiv:2405.05393\/} (2024).

\bibitem{joe1989relative}
{\sc Joe, H.}
\newblock Relative entropy measures of multivariate dependence.
\newblock {\em Journal of the American Statistical Association 84}, 405 (1989), 157--164.

\bibitem{klopp2017oracle}
{\sc Klopp, O., Tsybakov, A.~B., Verzelen, N., et~al.}
\newblock Oracle inequalities for network models and sparse graphon estimation.
\newblock {\em The Annals of Statistics 45}, 1 (2017), 316--354.

\bibitem{lauritzen2008exchangeable}
{\sc Lauritzen, S.~L.}
\newblock Exchangeable {R}asch matrices.
\newblock {\em Rend. Mat. Appl.(7) 28}, 1 (2008), 83--95.

\bibitem{liu2007brain}
{\sc Liu, C., Hu, S., Gu, J.~J., Yang, J., and Yu, M.}
\newblock Brain image registration based on entropy of mutual information matrix.
\newblock In {\em 2007 Canadian Conference on Electrical and Computer Engineering\/} (2007), IEEE, pp.~1163--1166.


\bibitem{luo2020highly}
{\sc Luo, X., Liu, Z., Shang, M., Lou, J., and Zhou, M.}
\newblock Highly-accurate community detection via pointwise mutual information-incorporated symmetric non-negative matrix factorization.
\newblock {\em IEEE Transactions on Network Science and Engineering 8}, 1 (2020), 463--476.

\bibitem{macdonald2022latent}
{\sc MacDonald, P.~W., Levina, E., and Zhu, J.}
\newblock Latent space models for multiplex networks with shared structure.
\newblock {\em Biometrika 109}, 3 (2022), 683--706.

\bibitem{mastrandrea2015contact}
{\sc Mastrandrea, R., Fournet, J., and Barrat, A.}
\newblock Contact patterns in a high school: a comparison between data collected using wearable sensors, contact diaries and friendship surveys.
\newblock {\em PloS one 10}, 9 (Art no. e0136497, 2015).

\bibitem{mayya2019mutual}
{\sc Mayya, V., and Reeves, G.}
\newblock Mutual information in community detection with covariate information and correlated networks.
\newblock In {\em 2019 57th Annual Allerton Conference on Communication, Control, and Computing\/} (2019), IEEE, pp.~602--607.

\bibitem{mcgill1954multivariate}
{\sc McGill, W.}
\newblock Multivariate information transmission.
\newblock {\em Transactions of the IRE Professional Group on Information Theory 4}, 4 (1954), 93--111.

\bibitem{mesner2020conditional}
{\sc Mesner, O.~C., and Shalizi, C.~R.}
\newblock Conditional mutual information estimation for mixed, discrete and continuous data.
\newblock {\em IEEE Transactions on Information Theory 67}, 1 (2020), 464--484.

\bibitem{mowshowitz1968entropy}
{\sc Mowshowitz, A.}
\newblock Entropy and the complexity of graphs: I. an index of the relative complexity of a graph.
\newblock {\em The Bulletin of Mathematical Biophysics 30}, 1 (1968), 175--204.

\bibitem{newman2020improved}
{\sc Newman, M.~E., Cantwell, G.~T., and Young, J.-G.}
\newblock Improved mutual information measure for clustering, classification, and community detection.
\newblock {\em Physical Review E 101}, 4 (2020), 042304.

\bibitem{newman2004finding}
{\sc Newman, M.~E., and Girvan, M.}
\newblock Finding and evaluating community structure in networks.
\newblock {\em Physical review E 69}, 2 (2004), 026113.

\bibitem{olbrich2008should}
{\sc Olbrich, E., Bertschinger, N., Ay, N., and Jost, J.}
\newblock How should complexity scale with system size?
\newblock {\em The European Physical Journal B 63\/} (2008), 407--415.

\bibitem{Olhede_2014}
{\sc Olhede, S.~C., and Wolfe, P.~J.}
\newblock Network histograms and universality of blockmodel approximation.
\newblock {\em Proceedings of the National Academy of Sciences 111}, 41 (2014), 14722–14727.

\bibitem{orbanz2014bayesian}
{\sc Orbanz, P., and Roy, D.~M.}
\newblock Bayesian models of graphs, arrays and other exchangeable random structures.
\newblock {\em IEEE Transactions on Pattern Analysis and Machine Intelligence 37}, 2 (2014), 437--461.

\bibitem{pamfil2020inference}
{\sc Pamfil, A.~R., Howison, S.~D., and Porter, M.~A.}
\newblock Inference of edge correlations in multilayer networks.
\newblock {\em Physical Review E 102}, 6 (2020), 062307.

\bibitem{paninski2003estimation}
{\sc Paninski, L.}
\newblock Estimation of entropy and mutual information.
\newblock {\em Neural Computation 15}, 6 (2003), 1191--1253.

\bibitem{paninski2008undersmoothed}
{\sc Paninski, L., and Yajima, M.}
\newblock Undersmoothed kernel entropy estimators.
\newblock {\em IEEE Transactions on Information Theory 54}, 9 (2008), 4384--4388.

\bibitem{pearl2022graphoids}
{\sc Pearl, J., and Paz, A.}
\newblock Graphoids: Graph-based logic for reasoning about relevance relations or when would x tell you more about y if you already know z?
\newblock In {\em Probabilistic and Causal Inference: The Works of Judea Pearl}. 2022, pp.~189--200.

\bibitem{perez1977varepsilon}
{\sc Perez, A.}
\newblock $\epsilon-$admissible simplifications of the dependence structure of a set of random variables.
\newblock {\em Kybernetika 13}, 6 (1977), 439--449.

\bibitem{rahimzamani2018estimators}
{\sc Rahimzamani, A., Asnani, H., Viswanath, P., and Kannan, S.}
\newblock Estimators for multivariate information measures in general probability spaces.
\newblock {\em Advances in Neural Information Processing Systems 31\/} (2018).

\bibitem{renyi1959measures}
{\sc R{\'e}nyi, A.}
\newblock On measures of dependence.
\newblock {\em Acta Mathematica Hungarica 10}, 3-4 (1959), 441--451.

\bibitem{rosas2019quantifying}
{\sc Rosas, F.~E., Mediano, P. A.~M., Gastpar, M., and Jensen, H.~J.}
\newblock Quantifying high-order interdependencies via multivariate extensions of the mutual information.
\newblock {\em Physical Review E 100}, 3 (2019), 032305.

\bibitem{skeja2023measuring}
{\sc Skeja, A., and Olhede, S.~C.}
\newblock Measuring the complexity of an exchangeable graph.
\newblock In {\em 2023 IEEE International Symposium on Information Theory (ISIT)\/} (2023), IEEE, pp.~2762--2767.

\bibitem{skeja2023entropy}
{\sc Skeja, A., and Olhede, S.~C.}
\newblock On the entropy of an exchangeable graph.
\newblock {\em arXiv preprint arXiv:2302.01856\/} (2023).

\bibitem{studeny1998multiinformation}
{\sc Studen{\`y}, M., and Vejnarov{\'a}, J.}
\newblock The multiinformation function as a tool for measuring stochastic dependence.
\newblock {\em Learning in Graphical Models\/} (1998), 261--297.

\bibitem{sun2019using}
{\sc Sun, Y., Li, X., and Ernst, A.}
\newblock Using statistical measures and machine learning for graph reduction to solve maximum weight clique problems.
\newblock {\em IEEE Transactions on Pattern Analysis and Machine Intelligence 43}, 5 (2019), 1746--1760.

\bibitem{teugels1990some}
{\sc Teugels, J.}
\newblock Some representations of the multivariate {B}ernoulli and binomial distributions.
\newblock {\em Journal of Multivariate Analysis 32}, 2 (1990), 256--268.

\bibitem{timme2014synergy}
{\sc Timme, N., Alford, W., Flecker, B., and Beggs, J.~M.}
\newblock Synergy, redundancy, and multivariate information measures: an experimentalist’s perspective.
\newblock {\em Journal of Computational Neuroscience 36\/} (2014), 119--140.

\bibitem{vijayaraghavan2017anatomy}
{\sc Vijayaraghavan, V.~S., James, R.~G., and Crutchfield, J.~P.}
\newblock Anatomy of a spin: the information-theoretic structure of classical spin systems.
\newblock {\em Entropy 19}, 5 (2017), 214.

\bibitem{von2007tutorial}
{\sc Von~Luxburg, U.}
\newblock A tutorial on spectral clustering.
\newblock {\em Statistics and Computing 17}, 4 (2007), 395--416.

\bibitem{watanabe1960information}
{\sc Watanabe, S.}
\newblock Information theoretical analysis of multivariate correlation.
\newblock {\em IBM Journal of Research and Development 4}, 1 (1960), 66--82.

\bibitem{williams2010nonnegative}
{\sc Williams, P.~L., and Beer, R.~D.}
\newblock Nonnegative decomposition of multivariate information.
\newblock {\em arXiv preprint arXiv:1004.2515\/} (2010).

\bibitem{wolfe2013nonparametric}
{\sc Wolfe, P.~J., and Olhede, S.~C.}
\newblock Nonparametric graphon estimation.
\newblock {\em arXiv preprint arXiv:1309.5936\/} (2013).

\bibitem{yan2016short}
{\sc Yan, J., Yin, X.-C., Lin, W., Deng, C., Zha, H., and Yang, X.}
\newblock A short survey of recent advances in graph matching.
\newblock In {\em Proceedings of the 2016 ACM on International Conference on Multimedia Retrieval\/} (2016), pp.~167--174.

\bibitem{yeung1991new}
{\sc Yeung, R.~W.}
\newblock A new outlook on {S}hannon's information measures.
\newblock {\em IEEE Transactions on Information Theory 37}, 3 (1991), 466--474.

\bibitem{zenil2014correlation}
{\sc Zenil, H., Soler-Toscano, F., Dingle, K., and Louis, A.~A.}
\newblock Correlation of automorphism group size and topological properties with program-size complexity evaluations of graphs and complex networks.
\newblock {\em Physica A: Statistical Mechanics and its Applications 404\/} (2014), 341--358.

\bibitem{zhang2024Consistent}
{\sc Zhang, J., Wang, J., and Wang, X.}
\newblock Consistent {{Community Detection}} in {{Inter-Layer Dependent Multi-Layer Networks}}.
\newblock {\em Journal of the American Statistical Association\/} (2024), 1--11.

\bibitem{zhang2015evaluating}
{\sc Zhang, P.}
\newblock Evaluating accuracy of community detection using the relative normalized mutual information.
\newblock {\em Journal of Statistical Mechanics: Theory and Experiment 2015}, 11 (2015), P11006.

\bibitem{zhao2017mutual}
{\sc Zhao, X., Shang, P., and Huang, J.}
\newblock Mutual-information matrix analysis for nonlinear interactions of multivariate time series.
\newblock {\em Nonlinear Dynamics 88\/} (2017), 477--487.

\bibitem{zhao2012consistency}
{\sc Zhao, Y., Levina, E., and Zhu, J.}
\newblock Consistency of community detection in networks under degree-corrected stochastic block models.
\newblock {\em The Annals of Statistics 40}, 4 (2012), 2266--2292.

\end{thebibliography}

\newpage 
\section*{Appendices}
\section*{I \quad Proofs}
\subsection*{I.A\quad Proof of Corollary 1}
We note that the univariate case, i.e. when the object is a single graph, the analogous of this theorem has been proved by~\cite{janson2013graphons}. We extend the proof for the setting of the joint entropy between two exchangeable graphs $G_1$ and $G_2$ generated by graphons $W^{(1)}$ and $W^{(2)}$, respectively.
\begin{proof}
    Let $\mathcal{H}(\cdot)$ denote the Shannon entropy. We note that $A^{(1)}$ and $A^{(2)}$ share the same latent vector. Therefore, conditioning on $\xi_1,\ldots,\xi_n$, and defining $\underline{A}_{ij}=[A_{ij}^{(1)}\quad A_{ij}^{(2)}]$, we get $\underline{A}_{ij}|\xi \sim \mathrm{MultBern_2}(\underline{W})$, where $\underline{W}=[W^{(1)} \quad W^{(2)} \quad W^{(12)}]$. \\ Let $\mathbf{W}_{1,2}=[W^{(12)} \quad (W^{(1)}-W^{(12)}) \quad (W^{(2)}-W^{(12)}) \quad (1-W^{(1)}-W^{(2)}+W^{(12)})]$ specify the parameters of the bivariate graph limit model. It follows that
    \begin{align}
        \nonumber &\mathcal{H}(G_1(n,W^{(1)}),G_2(n,W^{(2)})|\xi_i,\dots,\xi_n)=\sum_{i<j}\mathcal{H}(\underline{A}_{ij}|\xi_i,\ldots,\xi_n)=\sum_{i<j}\mathcal{H}(\mathrm{MultBern_2(\underline{W})})\\
       \nonumber &=\sum_{i<j}\{(W^{(12)}(\xi_i,\xi_j))\log(W^{(12)}(\xi_i,\xi_j))+(W^{(1)}(\xi_i,\xi_j)-W^{(12)}(\xi_i,\xi_j))\log(W^{(1)}(\xi_i,\xi_j)-W^{(12)}(\xi_i,\xi_j))\\
       \nonumber &+(W^{(2)}(\xi_i,\xi_j)-W^{(12)}(\xi_i,\xi_j))\log(W^{(2)}(\xi_i,\xi_j)-W^{(12)}(\xi_i,\xi_j))\\
       &+(1-W^{(1)}(\xi_i,\xi_j)-W^{(2)}(\xi_i,\xi_j)+W^{(12)}(\xi_i,\xi_j))\log(1-W^{(1)}(\xi_i,\xi_j)-W^{(2)}(\xi_i,\xi_j)+W^{(12)}(\xi_i,\xi_j))\}.
    \end{align}
Hence, 
\begin{align}
    \nonumber &\mathcal{H}(G_1(n,W^{(1)}),G_2(n,W^{(2)})\geq \mathbb{E}[\mathcal{H}(G_1(n,W^{(1)}),G_2(n,W^{(2)})|\xi] \\
   \nonumber &=\mathbb{E}\sum_{i<j}[(W^{(12)}(\xi_i,\xi_j))\log(W^{(12)}(\xi_i,\xi_j))+(W^{(1)}(\xi_i,\xi_j)-W^{(12)}(\xi_i,\xi_j))\log(W^{(1)}(\xi_i,\xi_j)-W^{(12)}(\xi_i,\xi_j))\\
       \nonumber &+(W^{(2)}(\xi_i,\xi_j)-W^{(12)}(\xi_i,\xi_j))\log(W^{(2)}(\xi_i,\xi_j)-W^{(12)}(\xi_i,\xi_j))\\
       &+(1-W^{(1)}(\xi_i,\xi_j)-W^{(2)}(\xi_i,\xi_j)+W^{(12)}(\xi_i,\xi_j))\log(1-W^{(1)}(\xi_i,\xi_j)-W^{(2)}(\xi_i,\xi_j)+W^{(12)}(\xi_i,\xi_j))]\\
        &={n \choose 2}\mathcal{H}(\mathbf{W}_{1,2}).
    \end{align}
    We now need to obtain an upper bound, and proceed similarly to the proof of the setting involving only one graph. We fix an integer $m$ and let $M_i=\lceil m\xi_i\rceil$. Therefore, $M_i=k \iff X_i \in I_{km}$. We then have
    \begin{align}
        \nonumber \mathcal{H}(G_1(n,W^{(1)}),G_2(n,W^{(2)})) &\leq \mathcal{H}(G_1(n,W^{(1)}),G_2(n,W^{(2)}),M_1,\ldots,M_n)\\
        &=\mathcal{H}(M_1,\ldots,M_n)+\mathcal{H}(G_1(n,W^{(1)}),G_2(n,W^{(2)})|M_1,\ldots,M_n)
    \end{align}
    Given that $M_1,\ldots,M_n\overset{\mathrm{iid}}{\sim} U\{1,\ldots,m\}$, consequently
    \begin{equation}
        \mathcal{H}(M_1,\ldots,M_n)=\sum_{i=1}^n \mathcal{H}(M_i)=n\log m.
    \end{equation}
    Furthermore,
    \begin{equation}
        \mathcal{H}(G_1(n,W^{(1)}),G_2(n,W^{(2)})|M_1,\ldots,M_n)\leq \sum_{i<j}\mathcal{H}(\underline{A}_{ij}|M_1,\ldots,M_n)=\sum_{i<j}\mathcal{H}(\underline{A}_{ij}|M_i,M_j).
    \end{equation}
    Next, we define for $k,l=1,\ldots,m$, and $(i)=(1),(2),(12)$
    \begin{align*}
        w_m^{(i)}(k,l)=\mathbb{E}[W^{(i)}(\xi_1,\xi_2)|M_1=k,M_2=l]=m^2\int_{I_{km}}\int_{I_{lm}}W^{(i)}(x,y)\ dx dy,
    \end{align*}
    the average of $W$ over $I_{km}\times I_{lm}$, and let
    \begin{equation*}
        W_m^{(i)}(x,y):=w_m^{(i)}(k,l) \quad \mathrm{if} \quad x\in I_{km} \quad \mathrm{and} \quad y\in I_{lm}.
    \end{equation*}
Thus, $W_m(\xi_i,\xi_j)$ equals $\mathbb{E}[W(\xi_1,\xi_2)|M_1,M_2]$. Given $M_i=k$ and $M_j=l$,
\begin{align*}
    &\mathbb{P}(A^{(1)}_{ij}=1)=\mathbb{E}[W^{(1)}(\xi_1,\xi_2)|M_1=k,M_2=l]=w_m^{(1)}(k,l),\\
    &\mathbb{P}(A^{(2)}_{ij}=1)=\mathbb{E}[W^{(1)}(\xi_1,\xi_2)|M_1=k,M_2=l]=w_m^{(2)}(k,l),\\
    &\mathbb{P}(A^{(1)}_{ij}=1,A_{ij}^{(2)}=1)=\mathbb{E}[W^{(12)}(\xi_1,\xi_2)|M_1=k,M_2=l]=w_m^{(12)}(k,l),\\
    &\mathbb{P}(A^{(1)}_{ij}=1,A_{ij}^{(2)}=0)=\mathbb{E}[W^{(1)}(\xi_1,\xi_2)-W^{(12)}(\xi_i,\xi_j)|M_1=k,M_2=l]=w_m^{(1)}(k,l)-w_m^{(12)}(k,l),\\
    &\mathbb{P}(A^{(1)}_{ij}=0,A_{ij}^{(2)}=1)=\mathbb{E}[W^{(2)}(\xi_1,\xi_2)-W^{(12)}(\xi_i,\xi_j)|M_1=k,M_2=l]=w_m^{(2)}(k,l)-w_m^{(12)}(k,l),\\
    &\mathbb{P}(A^{(1)}_{ij}=0,A_{ij}^{(2)}=0)=\mathbb{E}[1-W^{(1)}(\xi_1,\xi_2)-W^{(2)}(\xi_i,\xi_j)+W^{(12)}(\xi_i,\xi_j)|M_1=k,M_2=l]\\
    &=1-w_m^{(1)}(k,l)-w_m^{(2)}(k,l)+w_m^{(12)}(k,l).
\end{align*}
Therefore, \begin{align*}
    &\mathcal{H}(\underline{A_{ij}}|M_i=k,M_j=k)\\
    &=(w_m^{(12)}(k,l))\log(w_m^{(12)}(k,l))+(w_m{(1)}(k,l)-w_m^{(12)}(k,l))\log(w_m{(1)}(k,l)-w_m^{(12)}(k,l))\\
    &+(w_m^{(2)}(k,l)-w_m^{(12)}(k,l))\log(w_m^{(2)}(k,l)-w_m^{(12)}(k,l))\\
    &+(1-w_m^{(1)}(k,l)-w_m^{(2)}(k,l)+w^{(12)}_m(k,l))\log(1-w_m^{(1)}(k,l)-w_m^{(2)}(k,l)+w^{(12)}_m(k,l)).
\end{align*}
Consequently,
\begin{align}
    \nonumber &\mathbb{E}[\mathcal{H}(\underline{A_{ij}}|M_i,M_j)]\\
    \nonumber &=m^{-2}\sum_{k=1}^m (w_m^{(12)}(k,l))\log(w_m^{(12)}(k,l))+(w_m{(1)}(k,l)-w_m^{(12)}(k,l))\log(w_m{(1)}(k,l)-w_m^{(12)}(k,l))\\
    \nonumber &+(w_m^{(2)}(k,l)-w_m^{(12)}(k,l))\log(w_m^{(2)}(k,l)-w_m^{(12)}(k,l))\\
    \nonumber &+(1-w_m^{(1)}(k,l)-w_m^{(2)}(k,l)+w^{(12)}_m(k,l))\log(1-w_m^{(1)}(k,l)-w_m^{(2)}(k,l)+w^{(12)}_m(k,l))\\
    \nonumber &=\iint_{[0,1]^2}\Big\{(W_m^{(12)}(x,y))\log(W_m^{(12)}(x,y))\\
    \nonumber &+(W_m^{(1)}(x,y)-W_m^{(12)}(x,y))\log(W_m{(1)}(x,y)-W_m^{(12)}(x,y))\\
    \nonumber &+(W_m^{(2)}(x,y)-W_m^{(12)}(x,y))\log(W_m^{(2)}(x,y)-W_m^{(12)}(x,y))\\
     &+(1-W_m^{(1)}(x,y)-W_m^{(2)}(x,y)+W^{(12)}_m(x,y))\log(1-W_m^{(1)}(x,y)-W_m^{(2)}(x,y)+W^{(12)}_m(x,y))\Big\} dx dy.
\end{align}
Combining Equations (24)-(27), we obtain
\begin{align*}
    &\mathcal{H}(G_1(n,W^{(1)}),G_2(n,W^{(2)}))\\
    &\leq n\log m+ {n \choose 2}\iint_{[0,1]^2}\Big\{\{(W_m^{(12)}(x,y))\log(W_m^{(12)}(x,y))\\
    \nonumber &+(W_m^{(1)}(x,y)-W_m^{(12)}(x,y))\log(W_m{(1)}(x,y)-W_m^{(12)}(x,y))\\
    \nonumber &+(W_m^{(2)}(x,y)-W_m^{(12)}(x,y))\log(W_m^{(2)}(x,y)-W_m^{(12)}(x,y))\\
     &+(1-W_m^{(1)}(x,y)-W_m^{(2)}(x,y)+W^{(12)}_m(x,y))\log(1-W_m^{(1)}(x,y)-W_m^{(2)}(x,y)+W^{(12)}_m(x,y))\Big\} \ dx dy,
\end{align*}
and therefore, $\forall m \geq 1$,
\begin{align*}
    &\lim_{n \to \infty} \sup {n \choose 2}^{-1} \mathcal{H}(G_1(n,W^{(1)}),G_2(n,W^{(2)})) \\
    &\leq \iint_{[0,1]^2}\Big\{\{(W_m^{(12)}(x,y))\log(W_m^{(12)}(x,y))\\
    \nonumber &+(W_m^{(1)}(x,y)-W_m^{(12)}(x,y))\log(W_m^{(1)}(x,y)-W_m^{(12)}(x,y))\\
    \nonumber &+(W_m^{(2)}(x,y)-W_m^{(12)}(x,y))\log(W_m^{(2)}(x,y)-W_m^{(12)}(x,y))\\
     &+(1-W_m^{(1)}(x,y)-W_m^{(2)}(x,y)+W^{(12)}_m(x,y))\log(1-W_m^{(1)}(x,y)-W_m^{(2)}(x,y)+W^{(12)}_m(x,y))\Big\} \ dx dy.
\end{align*}
Finally, let $m\to \infty$. Then, $W_{m}^{(i)}(x,y)\to W^{(i)}(x,y)$ a.e., and thus $\mathrm{RHS} \to \mathcal{H}({\mathbf{W}_{1,2}})$ by dominated convergence.
\end{proof}

\subsection*{I. B $\quad$ Proof of Theorem 2}
\begin{proof}
    We bound the magnitude difference between the estimated graphon and the true graphon, for each graphon separately. For ease of notation let $W^{(d)}=W^{(d)}(x,y)$ for $(d)=(1),(2),(12)$.
    We demonstrate the general idea for one term and the rest follows:
    \begin{align*}
        &\Big|\iint _{[0,1]^2}\widehat{W}^{(12)}\log\widehat{W}^{(12)}-W^{(12)}\log W^{(12)} \ dx \ dy \Big|\\
        & = \Big|\iint _{[0,1]^2}(\widehat{W}^{(12)}-W^{(12)}) \log \widehat{W}^{(12)} + W^{(12)} \log\Big(\frac{\widehat{W}^{(12)}}{W^{(12)}}\Big) \ dx \ dy \Big|\\
         & \leq \Big|\iint _{[0,1]^2}(\widehat{W}^{(12)}-W^{(12)}) \log \widehat{W}^{(12)} \ dx \ dy\Big| + \Big|\iint_{[0,1]^2}W^{(12)}\frac{\widehat{W}^{(12)}-W^{(12)}}{W^{(12)}} \ dx \ dy \Big| \\
         &\leq \Big|\iint _{[0,1]^2}\Big(\widehat{W}^{(12)}-W^{(12)}\Big) \Big(\log W^{(12)}_{0}+\frac{(\widehat{W}^{(12)}-W^{(12)})}{W^{(12)}_0}+\frac{1}{2}(\widehat{W}^{(12)}-W^{(12)})^2\Big)\ dx \ dy \Big|\\
         &+\Big|\iint _{[0,1]^2}(\widehat{W}^{(12)}-W^{(12)}) \ dx \ dy \Big| \\
         &\leq C_1 \sqrt{\iint _{[0,1]^2}(\widehat{W}^{(12)}-W^{(12)})^2 \ dx \ dy}+ C_2 \iint _{[0,1]^2}(\widehat{W}^{(12)}-W^{(12)})^2 \ dx \ dy \\
         &\leq C\Big(n^{-2\alpha/\alpha+1}+\frac{\log n}{n}+\frac{1}{n^{\alpha}}\Big)^{1/2}
\end{align*}

    with probability at least $1-\exp(-C'n)$, where the result follows from the rate-optimal graphon estimation~\cite{gao2015rate}. The same technique applies to the remaining terms.
\end{proof}
\section*{II \quad Background and Auxillary Results}\label{sec-graphon-est}
\subsection*{Graphon Estimation}
 A commonly adopted approach in graphon estimation is approximating the graphon $W(x,y)$, $0<x,y<1$ by a Stochastic Block Model~\cite{Olhede_2014,wolfe2013nonparametric,gao2015rate,klopp2017oracle}. 
\begin{definition}[$k-$block Stochastic Block Model]\label{blockmodel}
Let $z$ be an $n$-length vector taking values in $\{1,\dots, k\}^n$. The Stochastic Block Model for an $n\times n$ array $A$ is specified in terms of the connectivity matrix $\mathbf{\Theta}$ as
\[\mathbb{P}(A_{ij}=1\,|\,z\}=\theta_{z_i,z_j},\quad 1\leq i<j\leq n.\]
\end{definition}
It has been shown by~\cite{Olhede_2014} that $W$ can be consistently estimated whenever it is H\"older continuous, and the optimal convergence rate has been provided by~\cite{gao2015rate,klopp2017oracle}.
An estimator $\widehat{W}$ can be seen as a Riemann sum approximation of $W$, and it is necessary to determine under what conditions these sums will converge. Lebesgue's criterion states that a bounded graphon on the interval $(0,1)^2$ is Riemann integrable if it is continuous everywhere. A way to ensure this continuity is if $W$ is $\alpha-$Hölder continuous for some $0<\alpha \leq 1$, indicating that
\begin{equation}\label{holder-graphon}
    W\in\mathrm{H\ddot{o}lder}^{\alpha}(M)\iff \sup_{(x,y)\neq(x',y')\in(0,1)^2} \frac{|W(x,y)-W(x',y')|}{|(x,y)-(x',y')|^{\alpha}}\leq M<\infty
\end{equation}
 H\"older contuinity, a necessary condition to ensure tractable estimation, is a standard assumption in the literature of nonparametric graphon estimation, see~\cite{Olhede_2014,gao2015rate,klopp2017oracle,wolfe2013nonparametric} for details and convergence rates of nonparametric graphon estimation. We now detail graphon estimation via stochastic block model approximation as treated by~\cite{Olhede_2014}. 

 Given a single adjacency matrix $\mathbf{A}\in \{0,1\}^{n \times n}$, we estimate $W(x,y) \in \mathcal{F}^{\alpha(M)}$ up to the rearrangement of the axes, using a stochastic block model with a single community size $h$, which can be either determined by the user or automatically determined from the adjacency matrix. Given this notation, then we have $n=hk+r$, for integers $h,k,r$, where $k$, is the number of blocks chosen as $k=\lceil n^{1/[(\alpha \wedge 1)+1]}\rceil$ following~\cite{gao2015rate,Olhede_2014}, $h\in\{2,\ldots,n\}$ is the bandwidth and $r=n \ \mathrm{mod} \ h$ is a reminder term between $0$ and $h-1$. We introduce a community membership vector $z$ which groups together nodes that should lie in the same groups. Its components will be in $\{1,\ldots,k\}$. 
 Let $\mathcal{Z}_k \subseteq \{1,\ldots,k\}^n$ represent the set including all community assignments $z$ respecting the form of $n=hk+r$. The main difficulty lies in estimating $z$, and several methods are available for estimating $z$. Common methods include spectral clustering~\cite{von2007tutorial}, likelihood methods~\cite{celisse2012consistency, bickel2009nonparametric, amini2013pseudo,choi2012stochastic, zhao2012consistency}, and modularity maximization~\cite{newman2004finding}.
 We estimate $z$ by the method of maximum likelihood as follows,
 \begin{equation*}\label{z}
     \hat{z}=\mathrm{argmax}_{z\in \mathcal{Z}_k}\sum_{i<j}A_{ij}\log \Bar{A}_{z_i z_j}+(1-A_{ij})\log(1-\Bar{A}_{z_i z_j})\},
 \end{equation*}
 where for all $1\leq a,b\leq k$ the histogram bins are defined as follows
 \begin{equation*}\label{A_bar}
\Bar{A}_{ab}=\frac{\sum_{i<j}A_{ij}\mathbbm{1}(z_i=a)\mathbbm{1}(z_j=b)}{\sum_{i<j}\mathbbm{1}(z_i=a)\mathbbm{1}(z_j=b)}.
 \end{equation*}
 Each bin height resembles the probability of edges present in the histogram bin which corresponds to a block of Bernoulli trials given by the objective function in~\ref{z}.
 It is also evident that $\Bar{A}_{ab}=\Bar{A}_{ba}$.
 The network histogram~\cite{Olhede_2014} is given as
 \begin{equation}\label{est-graphon}
     \widehat{W}(x,y;h)=\Bar{A}_{\min\{\lceil nx/h\rceil,k\}\min\{\lceil ny/h\rceil,k\}}, \quad 0<x,y<1.
 \end{equation}

\subsection*{Estimation of the Multivariate Graphon}
We will estimate $W^{(1)}(x,y)$, $W^{(2)}(x,y)$ and $W^{(12)}(x,y)$, up to re-arrangement of the axes, given the adjacency matrices $A^{(1)}$ and $A^{(2)}$, both of size $n\times n$ using a correlated two-layer stochastic block model with a single community vector and size $h$. The community vector $z$ of length $n$ groups nodes of the graphs that should lie on the same group, and we let $\mathcal{Z}_k$ denote all possible community vectors that respect $n=hk+r$, where $h$ denotes the community size or bandwidth, $k$ the number of communities, and $r$ is a remainder term between $0$ and $h-1$. All components of $z$ take values in $[k]$. We first formulate the bivariate Bernoulli likelihood for graphon estimation via two-layer correlated stochastic block model approximation for a given $z$, and then estimate $z$ via profile maximum likelihood. 

\subsubsection*{Multivariate Bernoulli Likelihood}\label{bivariate_MLE}
We now present the bivariate Bernoulli likelihood, which can be extended to trivariate and multivariate cases using the same approach. This involves constructing joint adjacency matrices, as demonstrated here for $A^{(12)}$. Using a correlated two-layer stochastic block-model~\cite{pamfil2020inference}, similarly to the case of univariate graphon estimation as detailed above, the task now simplifies to the estimation of the probability matrices $\Theta^{(1)}$, $\Theta^{(2)}$ and $\Theta^{(12)}$. To achieve this, we develop a bivariate maximum likelihood formulation, taking into account two adjacency matrices $A^{(1)}$ and $A^{(2)}$, corresponding to the graphs ${G}_1$ and ${G}_2$, respectively. It is worth restating that ${G}_1$ and ${G}_2$ are assumed to be node-aligned graphs, both of which are associated with the same underlying latent vector. This alignment is integral to the estimation process, as it allows for a unified approach in analyzing the interrelations and dependencies between the two graphs.
\begin{align*}
    &\mathbb{P}(\mathbf{A}^{(1)},\mathbf{A}^{(2)}|\mathbf{z},\theta^{(1)},\theta^{(2)},\theta^{(12)})\\
    &=\prod_{i,j}\Big\{{\theta^{(12)}_{z_i z_j}}^{A_{ij}^{(1)}A_{ij}^{(2)}}({\theta^{(1)}_{z_i z_j}}-{\theta^{(12)}_{z_i z_j}})^{A_{ij}^{(1)}(1-A_{ij}^{(2)})}({\theta^{(2)}_{z_i z_j}}-{\theta^{(12)}_{z_i z_j}})^{(1-A_{ij}^{(1)})A_{ij}^{(2)}} (1-{\theta^{(1)}_{z_i z_j}}-{\theta^{(2)}_{z_i z_j}}+{\theta^{(12)}_{z_i z_j}})^{(1-A_{ij}^{(1)})(1-A_{ij}^{(2)})}\Big\}
\end{align*}
Following~\cite{pamfil2020inference}, we define $$e_{ab}^{\alpha\beta}:=|\{(i,j)\in E: A_{ij}^1=\alpha,A_{ij}^2=\beta, z_i=a,z_j=b\}|,$$
for $(\alpha,\beta)\in\{(1,1),(1,0),(0,1),(0,0)\}$ to simplify the log--likelihood. 
Then, the log-likelihood is as follows
\begin{align*}
    \mathcal{L}&=\sum_{a,b} \{e_{ab}^{11}\log \theta^{(12)}_{ab}+e_{ab}^{10}\log(\theta^{(1)}_{ab}-\theta^{(12)}_{ab})+e_{ab}^{01}(\theta^{(2)}_{ab}-\theta^{(12)}_{ab})+e_{ab}^{00}(1-\theta^{(1)}_{ab}-\theta^{(2)}_{ab}+\theta^{(12)}_{ab})\}.
\end{align*}
Maximizing the likelihood
\begin{align*}
    \hat{\theta}^{(1)}_{ab}=\frac{e_{ab}^{11}+e_{ab}^{10}}{e_{ab}^{11}+e_{ab}^{10}+e_{ab}^{01}+e_{ab}^{00}}=\frac{m_{ab}^1}{e_{ab}},
\end{align*}

\begin{align*}
    \hat{\theta}^{(2)}_{ab}=\frac{e_{ab}^{11}+e_{ab}^{01}}{e_{ab}^{11}+e_{ab}^{10}+e_{ab}^{01}+e_{ab}^{00}}=\frac{m_{ab}^2}{e_{ab}},
\end{align*}

\begin{align*}
    \hat{\theta}^{(12)}_{ab}=\frac{e_{ab}^{11}}{e_{ab}^{11}+e_{ab}^{10}+e_{ab}^{01}+e_{ab}^{00}}=\frac{e_{ab}^{11}}{e_{ab}},
\end{align*}
where $m_{ab}^1$, $m_{ab}^2$ denote the number of edges between communities $a$ and $b$ in graphs ${G}_1$, ${G}_2$, respectively, and $e_{ab}$ denotes the number of possible edges between nodes in communities $a$ and $b$.
Going back to the notation we use, the following estimators could also be written as:
\begin{align*}
    &\hat{\theta}^{(1)}_{ab}=\frac{\sum_{i<j}A^{(1)}_{ij}\mathbbm{1}(z_i=a)\mathbbm{1}(z_j=b)}{\mathbbm{1}(z_i=a)\mathbbm{1}(z_j=b)}=\Bar{A}^{(1)}_{ab},\\
    &\hat{\theta}^{(2)}_{ab}=\frac{\sum_{i<j}A^{(2)}_{ij}\mathbbm{1}(z_i=a)\mathbbm{1}(z_j=b)}{\mathbbm{1}(z_i=a)\mathbbm{1}(z_j=b)}=\Bar{A}^{(2)}_{ab},\\
    &\hat{\theta}^{(12)}_{ab}=\frac{\sum_{i<j}A^{(12)}_{ij}\mathbbm{1}(z_i=a)\mathbbm{1}(z_j=b)}{\mathbbm{1}(z_i=a)\mathbbm{1}(z_j=b)}=\Bar{A}^{(12)}_{ab}.
\end{align*}

The main difficulty is in estimating the community vector $z$. Following the approach adopted in~\cite{Olhede_2014} for the case of a single graph, we estimate the shared community membership vector $z$ between adjacency matrices $A^{(1)}$ and $A^{(2)}$ by the method of profile maximum likelihood as follows:
\begin{align}
   \nonumber \hat{z}:&=\argmax_{z \in \mathcal{Z}_k}\sum_{i<j}\{A_{ij}^{(1)} A_{ij}^{(2)}\log (\Bar{A}_{z_i z_j}^{(12)})+A_{ij}^{(1)}(1-A_{ij}^{(2)})\log(\Bar{A}_{z_i z_j}^{(1)}-\Bar{A}_{z_i z_j}^{(12)})\\
  &+(1-A_{ij}^{(1)})A_{ij}^{(2)}\log(\Bar{A}_{z_i z_j}^{(2)}-\Bar{A}_{z_i z_j}^{(12)})+(1-A_{ij}^{(1)})(1-A_{ij}^{(2)})\log(1-\Bar{A}_{z_i z_j}^{(1)}-\Bar{A}_{z_i z_j}^{(2)}+\Bar{A}_{z_i z_j}^{(12)})\},
\end{align}
where for all $1\leq a,b \leq k$, and $k=\lceil n^{1/[(\alpha \wedge 1)+1]}\rceil$,
\begin{align}
&\Bar{A}_{ab}^{(d)}=\frac{A^{(d)}_{ij}\mathbbm{1}(\hat{z}_i=a)\mathbbm{1}(\hat{z}_j=b)}{\mathbbm{1}(\hat{z}_i=a)\mathbbm{1}(\hat{z}_j=b)},
\end{align}
for $(d)=(1),(2),(12)$, and $A^{(12)}$ as detailed in the paper equals $A^{(1)}_{ij}A^{(2)}_{ij}$. Since all adjacency matrices are symmetric, we have $\Bar{A}^{(d)}_{ab}=\Bar{A}^{(d)}_{ba}$.
Combining \eqref{z-est}-\eqref{theta-est}, and following~\cite{Olhede_2014}, we define the estimators of $W^{{(1)}}$, $W^{{(2)}}$ and $W^{{(12)}}$ as follows
\begin{align*}
    &\widehat{W}^{(1)}(x,y;h):=\Bar{A}^{(1)}_{\min(\lceil nx/h\rceil,k) \min(\lceil ny/h\rceil,k)}, \ 0<x,y<1,\\
    &\widehat{W}^{(2)}(x,y;h):=\Bar{A}^{(2)}_{\min(\lceil nx/h\rceil,k) \min(\lceil ny/h\rceil,k)}, \ 0<x,y<1,\\
    &\widehat{W}^{(12)}(x,y;h):=\Bar{A}^{(12)}_{\min(\lceil nx/h\rceil,k) \min(\lceil ny/h\rceil,k)}, \ 0<x,y<1.
\end{align*}
This estimation method can analogously be extended to the setting of three and more graphs.

\section*{III \quad Additional Simulation Results}
\subsection*{Graphon Mutual Information Matrix}
We now investigate the von Neumann entropy calculated from the spectrum of the graphon mutual information density matrix for two different scenarios.
\subsubsection*{Case 1: Percolated graphs}
Consider adjacency matrices $A^{(1)}$, $A^{(2)}$ and $A^{(3)}$ of graphs ${G}_1(n,W^{(1)})$, ${G}_2(n,W^{(2)})$ and ${G}_3(n,W^{(3)})$ generated from graph limits $W^{(1)}(x,y)=xy$, $W^{(2)}(x,y)=0.95*xy$ and $W^{(3)}(x,y)=0.9*xy$, respectively. The graphon mutual information matrix $\mathbf{I}$, without normalization, is then given as on the left below, and the normalization following Equations 6 and 7 given in the paper, that gives rise to the graphon mutual information density matrix $\rho$, is as given on the right:
$$
\mathbf{I}=
\begin{pmatrix}
0.4275 & 0.3993 & 0.3409 \\
0.3993 & 0.4260 & 0.3316 \\
0.3409 & 0.3316 &  0.4230  
\end{pmatrix}, \quad
\mathbf{I}^{\rho}=
\begin{pmatrix}
1/3 & 0.2948 & 0.2677 \\
0.2948 & 1/3 & 0.2678 \\
0.267 & 0.2678 & 1/3 
\end{pmatrix}.
$$
The eigenvalues associated with the graphon mutual information density matrix $\rho$ are $\lambda_1\approx 0.8871, \ \lambda_2\approx 0.0744$ and $\lambda_3 \approx 0.0385$. Redundancy in this system can be observed from the difference in magnitude of the first eigenvalue and the rest. The von Neumann entropy associated with this matrix is calculated as $\mathcal{H}(\mathbf{I}^{\rho})=-\sum_{i=1}^3 \lambda_i \log \lambda_i \approx 0.4152$. A final normalization for the von Neumann entropy to ensure it is normalized by the maximal value this entropy can attain gives $\mathcal{H}^{\mathrm{N}}(\mathbf{I}^{\rho})\approx 0.5474/\log(3)\approx 0.377$, which is closer to the so--called pure--state configuration as discussed in the paper. 

\subsubsection*{Case 2: Three different graph limits}
In this case, we have the following graph limits $W^{(1)}(x,y)=xy$, $W^{(2)}(x,y)=0.3\exp(-0.5(x+y))$ and $W^{(3)}(x,y)=0.3(x
+y)$, which are different in nature, and do not seem to share a lot of information apart from the same latents. We calculate the graphon mutual information between the adjacency matrices generated by these graphs and consequently obtain the following graphon mutual information matrix, and graphon mutual information density matrix:
\begin{equation*}
\mathbf{I}=
\begin{pmatrix}
0.4275 & 0.0944 & 0.2699 \\
0.0944 & 0.2609 & 0.1066 \\
0.2699 & 0.1066 & 0.5712 
\end{pmatrix}, \quad
\mathbf{I}^{\rho}=\begin{pmatrix}
1/3 & 0.1206 & 0.2104 \\
0.1206 & 1/3 & 0.1361 \\
0.2104 & 0.1361 & 1/3 
\end{pmatrix}.
\end{equation*}
Given the different functional form of the graph limits, we do not expect to see redundancy in this case, and eigenvalues of the graphon mutual information density matrix are given as $\lambda_1 \approx 0.6484$, $\lambda_2\approx 0.2296$ and $\lambda_3 \approx 0.1220$. The von Neumann entropy associated with this matrix then is $\mathcal{H}(\mathbf{I}^{\rho})=-\sum_{i=1}^3 \lambda_i \log \lambda_i \approx 0.8754$. A final normalization for the von Neumann entropy to ensure it is normalized by the maximal value this entropy can attain gives $\mathcal{H}^{\mathrm{N}}(\mathbf{I}^{\rho})\approx 0.8754/\log(3)\approx 0.7968$, which can be seen to be close to the so-called \emph{completely mixed state}. 
\end{document}